\documentclass[a4paper,14pt]{article}
\usepackage{amsfonts,amsmath,amsthm,amscd,amssymb,latexsym}

\usepackage[T2A]{fontenc}
\usepackage[cp1251]{inputenc}
\usepackage[english ]{babel}  
\usepackage{graphicx}
\usepackage{setspace}
\usepackage{xcolor}
\usepackage{fancyhdr}
\usepackage{amsmath}
\usepackage[all, matrix,arrow,curve]{xy}
\usepackage{amsmath, amssymb, amsfonts, amsthm}
\usepackage{tikz-cd} 

\begin{document}

\title{On minimal involutive generic sets\\ of Extended Special Linear group $ESL_n(\mathbb{F}_p)$}

\author{R. Skuratovskii \\ {ruslan.skuratovskii@imath.kiev.ua} \\
Institute of applied mathematics and mechanics \\
of the NASU, Kiev, Ukraine, skuratovskii@nas.gov.ua}

\theoremstyle{plain}

\newcommand{\ssty}{\scriptstyle}
\newcommand{\w}{\omega}
\newcommand{\sr}{\stackrel}
\newcommand{\ov}{\overline}
\newcommand{\ga}{\gamma}
\newcommand{\al}{\alpha}
\newcommand{\be}{\beta}
\newcommand{\de}{\delta}
\newcommand{\si}{\sigma}
\newcommand{\la}{\lambda}

\newcommand{\N}{\mathbb{N}}
\newcommand{\Z}{\mathbb{Z}}
\newcommand{\R}{\mathbb{R}}
\renewcommand{\C}{\mathbb{C}}

\newcommand{\PP}{\mathcal{P}}

\newtheorem{theorem}{Theorem}[section]
\newtheorem{lemma}{Lemma}[section]
\newtheorem{proposition}{Proposition}[section]
\newtheorem{corollary}{Corollary}[section]
\newtheorem{definition}{Definition} [section]

\newtheorem{example}{Example}[section]
\newtheorem{remark}{Remark}[section]
\newtheorem{defn}{Definition}[section]
\newcommand{\keywords}{\textbf{Key words.  }\medskip}
\newcommand{\subjclass}{\textbf{MSC 2000. }\medskip}
\renewcommand{\abstract}{\textbf{Abstract.  }\medskip}
\numberwithin{equation}{section}

\setcounter{section}{0}
\renewcommand{\thesection}{\arabic{section}}
\newcounter{unDef}[section]
\def\theunDef{\thesection.\arabic{unDef}}

\maketitle

\def\Xint#1{\mathchoice
   {\XXint\displaystyle\textstyle{#1}}
   {\XXint\textstyle\scriptstyle{#1}}
   {\XXint\scriptstyle\scriptscriptstyle{#1}}
   {\XXint\scriptscriptstyle\scriptscriptstyle{#1}}
   \!\int}
\def\XXint#1#2#3{{\setbox0=\hbox{$#1{#2#3}{\int}$}
     \vcenter{\hbox{$#2#3$}}\kern-.5\wd0}}
\def\dashint{\Xint-}

\newcommand{\M}{{\cal M}}
\newcommand\disk{{\Bbb D}}

\def\esssup{\mathop{\rm {ess\,sup}}\limits}
\def\ink{\mathop{\int\int}\limits}
\def\h{{\Bbb H}}
\def\R{{\rm Re\,}}
\def\I{{\rm Im\,}}
\def\mod{{\rm mod\,}}
\def\e{\varepsilon}
\def\QED{{\hfill $\Box$\par\bigskip}}

\pagestyle{myheadings}
{\bf Abstract }
The size of minimal systems of generators and these systems themselves for groups $ESL_n \left[ \mathbb{Z} \right]$ and $ESL_n \left( \mathbb{F}_p \right)$ were found.
 Moreover we obtain triples of involutions with\textit{\textbf{ two commuting involu\-tions}} generating $ESL_5 \left[ \mathbb{Z} \right]$, $ESL_5 \left[ \mathbb{Z} \right]$ relatively,  that are Mazurov triples.
    \begin{keywords}
     Extended special linear group, minimal involutive generating set with two commuting involutions, $sggi$ groups, minimal generating set, \\
\end{keywords}

~\\

{\bf Introduction }
In this research we continue our previous investigation \cite{SkuESL}, where we
generalized the group of unimodular matrices \cite{Amit} and found its structure just for the case $n=2$. For this aim we construct split extension of $SL_n\left( \mathbb{F}_p \right)$ by arbitrary matrix $D \in M_n(\mathbb{F}_p)$ having $\det \left( {{D}} \right)=-1$ to $ESL_n \left[ \mathbb{Z} \right]$.
Similar arguments are fair for constructing  $ESL_n \left[ \mathbb{Z} \right]$ over the integral ring.

For this goal we propose one extension of the special linear group.
Groups generated by three involutions, two of which are permutable, have long been of interest in the theory of matrix groups \cite{Maz}, for instance such generating set was researched for $S{{L}_{2}}[{{\mathbb{Z}+ i\mathbb{Z}}}]$ \cite{Nuz}.

The matrices involutions of $ES{{L}_{2}}\left[ \mathbb{Z} \right]$ can be a presentation of reflections from the Coxeter group $(W,V)$ \cite{Coxet} which map domains in relation to the hyperplanes $F_j$ corresponding to integer value in $e^{2ipx}$ which maps $\mathbb{R}$ in $S^1$.
Thus, involutions which frames a generator system of $ES{{L}_{2}}\left[ \mathbb{Z} \right]$ are related with generator system of $W$ \cite{Coxet} due to the transformation to a correspondent basis.
These reflections commute if indexes of their preimages $\sigma_i, \sigma_j$ from Artin braid $n$-string group satisfy the inequality $|i - j|>2$. This induce problem of matrix presentation of such reflections, which was stated in the paper \cite{Leem} as separated problem for matrix groups which were called $sggi$ groups.


The question of involutions $g_i$ commutation that appear as reflections with respect to hyperfaces in rectangle Coxeter group $RC_p$, as such the generators are involutions, is described in article \cite{TP}, where it is stated that commutation exists only when $ F_i \cap F_j \neq   \emptyset $. Therefore it is important to research matrix representation of generator set of involutions in the provided order with commutation of neighbouring involutions in linear group over $\mathbb{Z}$.
Therefore, it is important to study the matrix representation of the set of involutive generators of the graph $G$ with commutation of adjacent involutions in some order.

The question of study an automorphism group of lattice $\mathbb{Z}^n$ is very important because it is an algebraic foundations of lattice-based cryptography \cite{LatCry, NTRU, BookLatCry}.
Group of automorphisms of $\mathbb{Z}^n$ is much bigger than just signed permutations, it's infinite (for $n \geq 2$) and consists of all integer matrices whose determinant is 1 or -1.
An automorphism of this lattice be an orthogonal transformation (i.e. linear transformation that preserves the inner product).

The automorphism group of $\mathbb{Z}^n$ is not finite when $n \geq 2$, since it consists of all $n\times n$ matrices with integer coefficients and determinant 1 or -1 that means this automorphism group is $ES{{L}_{n}}[{{\mathbb{Z}}}]$.

\section{Concept of $ESL_3(\mathbb{F}_p)$ and $ESL_3(\mathbb{Z})$}

We recall a concept of $ES{{L}_{2}}\left( \mathbb{F}_p \right)$ introduced in our previous work \cite{SkuESL}.
\begin{definition}
 The set of matrices
\begin{equation}\label{ESL}
 \left\{ {{M}_{i}}:\det({{M}_{i}})=\pm 1,  {M}_{i} \in GL_2(\mathbb{F}_p) \right\}
 \end{equation}
forms the extended special linear group and is denoted by ${ESL}_2(\mathbb{F}_p)$.
\end{definition}
Let $SL_3(\mathbb{Z})$ or $SL_3(\mathbb{F}_p)$ denotes the special linear group of degree 3 over integer ring of a finite field, respectively.
\begin{definition}
\textit{ The set of matrices over the integer ring or over a field \[\left\{ {{M}_{i}}:\det({{M}_{i}})=\pm 1,  {M}_{i} \in GL_3(\mathbb{Z}), \, or \ {M}_{i} \in GL_3(\mathbb{F}_p) \right\} \] forms \textbf{extended special linear group} over $\mathbb{Z}$ and is denoted by ${ESL}_3(\mathbb{Z})$ or by ${ESL}_3(\mathbb{F}_p)$, respectively.}
\end{definition}
We note that $ESL_3(\mathbb{Z}) \simeq GL_3(\mathbb{Z})$ however, this isomorphism turns into an isomorphic embedding $ESL_3(\mathbb{F}_p) < GL_3(\mathbb{F}_p)$ over a finite field $\mathbb{F}_p$.


To construct split extension of $SL\left( 3, \mathbb{F}_p \right)$ we consider the following class of matrix having $\det \left( {{D_{1}}} \right)=-1$,
but do not centralizing the group $S{{L}_{3}}\left( \mathbb{F}_p \right)$.
$${{D}_{1}}=\left(\hspace{-1.5mm} \begin{array}{rrr}
   -1 & 0 & 0  \\
   0 & 1 & 0  \\
   0 & 0 & 1  \\
\end{array} \right), \
 {{D}_{2}}=\left( \begin{matrix}
   1 & 0 & 0  \\
   0 & -1 & 0 \\
   0 & 0 & 1  \\
\end{matrix} \right),
         {{D}_{3}}=\left( \begin{matrix}
   1 & 0 & 0  \\
   0 & 1 & 0  \\
   0 & 0 & -1  \\
\end{matrix} \right).$$

Based on the above, we conclude that structure of group generated as extension of $S{{L}_{3}}\left( \mathbb{F}_p \right)$ by $\left\langle {{D}_{1}} \right\rangle $ is a semidirect product
$$\left\langle {{D_{1}}} \right\rangle \ltimes SL_{3}\left( \mathbb{F}_p \right)\simeq ES{{L}_{3}}\left( \mathbb{F}_p \right)$$
with a kernel $S{{L}_{3}}\left( \mathbb{F}_p \right)$.
This group is endowed with the same structure over the integer ring $\left\langle {{D_{1}}} \right\rangle \ltimes SL_{3}\left( \mathbb{Z} \right)\simeq ES{{L}_{3}}\left( \mathbb{Z} \right)$.

But if we extend the kernel by $D_{1,2,3}=D_1D_2 D_3$, which centralizes $SL (3,\mathbb{F}_p)$ as a scalar matrix, then the semidirect product degenerates into a direct product
$$\langle D_{1,2,3} \rangle \times SL_3 (\mathbb{F}_p) \simeq ESL_3 (\mathbb{F}_p).$$

The role of a complementary subgroup for the kernel $SL(3,F)$ in the split group $ES{{L}_{3}}(\mathbb{F}_p)$ can also be played by subgroups formed by $D_2$ and $D_3$ therefore $\left\langle {{D_{2}}} \right\rangle \ltimes SL_{3}\left( \mathbb{F}_p \right)\simeq ES{{L}_{3}}\left( \mathbb{F}_p \right)$ and $\left\langle {{D_{3}}} \right\rangle \ltimes SL_{3}\left( \mathbb{F}_p \right)\simeq ES{{L}_{3}}\left( \mathbb{F}_p \right)$.
A diagonal matrix with one diagonal element $ -1$ and the rest of them 1 be called \textit{\textbf{elementary diagonal}} matrix.


\begin{remark}
{\sl If $n=2k+1$, then there exist exactly $2^{n-1}$ diagonal extensions by subgroup generated by elementary diagonal matrices of the group $SL(n, \mathbb{F}_p)$ to the group $ESL(n, \mathbb{F}_p)$. }
 \end{remark}
\begin{proof}
To generate diagonal matrix with odd number of -1 by using elementary diagonal matrices $D_i$ we can multiplicate odd number 5 of them. The quantity of diagonal matrix with odd number of -1 on diagonal is sum of combinations of $n$ by odd numbers means coordinates of -1 on diagonal.
In order to compute a sum of ${n{} \choose 2l}$ consider a sum of $(1+1)^n=2^n$ and $(1-1)^n=0$ which is $ \left({n^{ } \choose 0}+{n^{ } \choose 2}+ {n^{} \choose 4}+ \ldots \right)$ and divide it by 2.

\begin{equation}\label{placement}
\left(\sum_{l=0}^{{n^{}}}{n^{} \choose 2l}(1+(-1)^l)\right):2 =
2^{n^{}-1}.
\end{equation}
\end{proof}

Let $SL_3(\mathbb{Z})$ denote the special linear group of degree 3 over integer ring.
By transvection $t_{ij}$ we mean the sum $E+e_{ij}$, where $e_{ij}$ is a matrix unit with 1 only in  intersection of $i$-th row and $j$-th column the rest elements are 0.

Denote a permutation matrix of order 3 by ${{P}_{3}}$ and the transvection \cite{SkuESL} by $t_{12}$ of group $SL_3(\mathbb{F}_p)$.

By the transvection $t_{ij}$ we mean the sum $E+e_{ij}$, where $e_{ij}$ is a matrix unit with 1 only in  intersection of $i$-th row and $j$-th column the rest elements are 0.   


\textbf{Proposition 1.}
{\sl  Minimal generating set for $ESL_3(\mathbb{Z})$ consists of 2 generators:
\begin{center}
  $P=\left( \begin{matrix}
   0 & -1 & 0  \\
   0 & 0 & 1  \\
   1 & 0 & 0  \\
\end{matrix} \right)$ and  $t_{12}=\left( \begin{matrix}
   1 & 1 & 0  \\
   0 & 1 & 0  \\
   0 & 0 & 1  \\
\end{matrix} \right).$
\end{center} }
The order of the permutation matrix is indicated by the relation $P^6=E$.
The size of the generic set is minimal for non-cyclic group so its minimality does not order a proof. 

For convenience we fix some notations for \textit{diagonal involutive matrices} from $ESL_5(\mathbb{Z})$:
\begin{center}
$I_{12} = \begin{pmatrix}
            -1 & 1 & 0 & 0 & 0 \\
            0  & 1 & 0 & 0 & 0 \\
            0  & 0 & 1 & 0 & 0 \\
            0  & 0 & 0 & 1 & 0 \\
            0  & 0 & 0 & 0 & 1
        \end{pmatrix},
        I_{23} = \begin{pmatrix}
            1 & 0 & 0 & 0 & 0 \\
            0 & -1& 1 & 0 & 0 \\
            0 & 0 & 1 & 0 & 0 \\
            0 & 0 & 0 & 1 & 0 \\
            0 & 0 & 0 & 0 & 1
        \end{pmatrix}, \ldots,
        I_{45} = \begin{pmatrix}
            1 & 0 & 0 & 0 & 0 \\
            0 & 1 & 0 & 0 & 0 \\
            0 & 0 & 1 & 0 & 0 \\
            0 & 0 & 0 & -1& 1 \\
            0 & 0 & 0 & 0 & 1
        \end{pmatrix}.
    $
    \end{center}

\begin{example}\label{inv} Note that it is possible to express transvection using only two non-commutative involutions.  
\begin{center}
${{i}_{12}}=\left( \begin{matrix}
   1 & 1 &  0  \\
   0 & -1 & 0  \\
   0 & 0 & -1  \\
\end{matrix} \right)$,  ${{i}_{23}}=\left( \begin{matrix}
   1 & 0 & 0  \\
   0 & 1 & 1  \\
   0 & 0 & -1  \\
\end{matrix} \right)$,
$({{i}_{12}}{{i}_{23}})^2=\left( \begin{matrix}
   1 & 1 & 1  \\
   0 & -1 & -1  \\
   0 & 0 & 1  \\
\end{matrix} \right)^2= \left( \begin{matrix}
   1 & 0 & 1  \\
   0 & 1 & 0  \\
   0 & 0 & 1  \\
\end{matrix} \right) ={{t}_{31}}^{}$.
 \end{center}
Thus, we generate transvection $t_{31}$ by two involutions $i_{12}$ and $i_{23}$. The rest of two transvections can be constructed as a product of involutions by similar arguing with $i_{kl}$. The same is true for $ESL_n(\mathbb{Z})$, $n \geq 3$.
\end{example}

The existence of a non-trivial homomorphism $\varphi :\,\,{{\mathbb{Z}}_{2}}\to Aut\left( S{{L}_{2}}(\mathbb{Z}) \right)$, as well as $\phi :\,\,{{\mathbb{Z}}_{2}}\to Aut\left( S{{L}_{2}}({{\mathbb{F}}_{p}}) \right)$ can be proved by indicating an element of order 2 in the automorphisms of base group that is the kernel of the semidirect product we want to construct.

There is countergradient automorphism in $S{{L}_{3}}\left( \mathbb{Z} \right)$, namely, $\varphi :\,M\to {{\left( {{M}^{T}} \right)}^{-1}}$ or an alternating automorphism of order 2 acting by conjugating $\varphi :\,M\to D_1^{-1}MD_1$, which is called the diagonal automorphism \cite{Mersl}.

Recall the \textbf{definition} of $\mathbf{TI-subgroup}$ \cite{Suds,Zu}.  Let $G$ be a group and $A < G$, then $A$ is called $\mathbf{TI-}$subgroup iff  $A \cap A^g = e$ for each $g \in G \setminus N_G(A)$.

\begin{remark}
Subgroup $\mathbb{C}_{2}$ is $\mathbf{TI-subgroup}$ and antinormal subgroup.
\end{remark}

\begin{proof}
In view of ${\mathbb{C}}_{2}$ is one generated then its centralizer coincides with its normalizer. One easy can verify that centralizer consists of all diagonal matrices from $ESL_2(\mathbb{F}_p)$.
Let us find a structure of such normalizer $N_{ESL_2(\mathbb{F}_p)} ({\mathbb{C}}_{2})$.
In view of e.v. is  invariant under conjugation by non-singular matrix over field the normalizer of top subgroup ${\mathbb{C}}_{2}$ in $ESL_2(\mathbb{F}_p)$ consists of  all diagonal matrices from $ESL_2(\mathbb{F}_p)$ and permuta\-tio\-nal matrix ${{\mathcal{P}}}=\left( \begin{array}{rr}
   0\,\,\, &1 \\
  1\,\, & 0 \\
\end{array} \right)$.
We assume that $N_{ESL_2(\mathbb{F}_p)} ({\mathbb{C}}_{2}) \simeq   D(SL_2(\mathbb{F}_p))\rtimes \,{\mathcal{P}}$, where $D(SL_2(\mathbb{F}_p))$ diagonal subgroup of $ESL_2(\mathbb{F}_p)$.

For the rest of elements condition of $A \cap A^g = e$ for each $g \in ESL_2(\mathbb{F}_p) \setminus N_{ESL_2(\mathbb{F}_p)} ({\mathcal{C}}_{2})$ holds. Thus, $\mathbb{C}_{2}$ is $\mathbf{TI-subgroup}$, hence $\mathbb{C}_{2}$ is antinormal subgroup.
\end{proof}

Let us find a normal closure of $D_i$ for $1 \leq i \leq 3$ in $ESL(3, \mathbb{Z})$ which be denoted by ${{N}_{ESL}}\left( {{D}_{1}} \right)$. We demonstrate two typical classes of ${{N}_{ESL_3(\mathbb{Z})}}\left( {{D}_{i}} \right)$ normal closure here:
\[{{N}_{ESL_3(\mathbb{Z})}}\left( {{D}_{1}} \right)=\left\{ \left( \begin{matrix}
   d & 0 & 0  \\
   0 & a & b  \\
   0 & g & c  \\
\end{matrix} \right) \in ESL_3(\mathbb{Z}) \left| d,a,b,c,g\in \mathbb{Z} \right. \right\}\]
\[{{N}_{ESL_2}}\left( {{D}_{2}} \right)=\left\{ \left( \begin{matrix}
   x & 0 & f  \\
   0 & y & 0  \\
   c & 0 & z  \\
\end{matrix} \right)\in ESL_3(\mathbb{Z}) \left| x,y,f,c,z\in \mathbb{Z} \right. \right\}. \]







The intersection $A \cap A^g = E$, provided $g \notin {{N}_{ESL_3(\mathbb{Z})}}\left( {{D}_{i}} \right)$ for each $1 \leq i \leq 3$, is trivial by virtue of ${{N}_{ESL_3(\mathbb{Z})}}\left( {{D}_{i}} \right)={{C}_{ESL}}_3({D}_{i})$ (a normalizer of a one-generated group is equal to its centralizer).

The normalizer ${{N}_{ESL_3(\mathbb{Z})}}\left( {{D}_{123}} \right)$ of the subgroup $\left\langle D_{123} \right\rangle $ is an exception as an element of the center of the entire group $ESL_3(\mathbb{Z})$, therefore ${{N}_{ESL_3(\mathbb{Z})}}\left( {{D}_{i}} \right)$, therefore ${{N}_{ESL_3(\mathbb{Z})}}\left( {{D}_{123}} \right) = ESL_3(\mathbb{Z})$ and consiquently $ \left\langle {D_{123}} \right\rangle^g = \left\langle D_{123} \right\rangle$. Hence, $\left\langle D_{123} \right\rangle \cap \left\langle D_{123} \right\rangle ^g = \left\langle D_{123}\right\rangle$.

Let $i$ denotes embedding of $SL(3, \mathbb{Z})$ in $ESL(3,\mathbb{Z})$.
where epimorphism $\eta$ is homomorphism with the kernel $N = \left\langle E, -E \right\rangle$. Thus we have quotient ${}^{PESL(3,\mathbb{Z})} \diagup{{}_{ \left\langle E, -E \right\rangle }}\simeq PSL(3,\mathbb{Z})$ similarly as       ${}^{ESL(3, \mathbb{Z})}/{}_{ \left\langle E, -E \right\rangle } \simeq PESL(3,\mathbb{Z})$.
In the general case $n=2k+1$, there is an isomorphism $SL(2k+1, \mathbb{Z}) \simeq PSL(2k+1, \mathbb{Z})$ due to the absence of scalar matrices $A_i \in SL(2k+1, \mathbb{Z})$ with $\det(A_i)=-1$.

\begin{remark}\label{SLn=2k+1}
In the general case $n=2k+1$, there is an isomorphism $SL(2k+1,Z) \simeq PSL(2k+1,Z)$ due to the absence of non-trivial scalar matrices $A_i$ with $\det(A_i)=-1$
\end{remark}
\begin{proof}
    The kernel of homomorphism from $SL(3,\mathbb{Z})$ to $PSL(3,\mathbb{Z})$ consists only of scalar matrix $E$ due to the absence of
another scalar matrices $A_i$ with $\det(A_i)=1$ over $\mathbb{Z}$.  Therefore $SL(3,\mathbb{Z}) \simeq PSL(3, \mathbb{Z})$.
\end{proof}


In even dimensions, we have a commutative diagram of morphisms
where epimorphism ${i}$ provide us the quotient ${}^{ESL(2,\mathbb{Z})}/{}_{ \left\langle E, -E \right\rangle } \simeq PESL(2, \mathbb{Z})$.

Let $\phi$ denotes embedding of $SL(2,\mathbb{Z})$ in $ESL(2, \mathbb{Z})$.
\begin{center}
$\xymatrix{\ar @{} [dr] |{}
SL(2,\mathbb{Z}) \ar @{^{(}->}^{\phi} [r] \ar@{.>}[dr]|-{(i)} \ar @{->>} [d] ^{\alpha} & ESL(2,\mathbb{Z}) \ar @{->>}  [d] ^{\psi} \\
PSL(2,\mathbb{Z})  \ar @{^{(}->}^{\xi} [r]  & PESL(2,\mathbb{Z})  }$
\end{center}



\begin{remark}
 There isn't is not surjective homomorphism from $ESL_n(\mathbb{Z})$ to $SL_n(\mathbb{Z})$ for $n=2$, but for $n \geq 3 $ such surjective homomorphism exists.
\end{remark}
\begin{proof}
Taking into account that $ESL_2(\mathbb{Z})$ can be generated by involutions as in the Example \eqref{inv}, then all its elements under homomorphism $\phi'$ from $ESL_2(\mathbb{Z})$ to $SL_2(\mathbb{Z})$ map in elements of second order in $SL_2(\mathbb{Z})$, but there are only $E$ and $-E$ of order in $SL_2(\mathbb{Z})$. Thus, there is not surjective homomorphism from $ESL_2(\mathbb{Z})$ to $SL_2(\mathbb{Z})$.  The subgroup generated by involutions in $SL_2(\mathbb{Z})$ is subgroup $\langle E, -E \rangle$, for $n=2$ which does not generate $SL_2(\mathbb{Z})$.

However, $ESL(n, \mathbb{Z})$ contains sufficient enough number of involutions to generate it, which be proved in Proposition \ref{I_{12},D_{1}, F_L}. Hence aforementioned homomorphism already exists.
\end{proof}

\begin{definition} \label{quasimple}
 A group $G$ is called quasimple if its inner automorphism group $Inn(G)$ is simple.
\end{definition}

\begin{definition}
      We define $PESL(n,p)$ as a quotient of $ESL(n,p)$ by its center.
\end{definition}


%




\begin{proposition}
   {\sl  If $-1$ is not a quadratic residue in $F_p$, provided $p=4k+3$ and $p>2$, then $Z(ESL(2,p)) \simeq Z(SL(2,p))$ for $k \in \mathbb{N}$, and furthermore, $PSL\left( 2, p \right)\triangleleft PESL\left( 2, p \right)$.

In~ the~ case~ $p=2$,~  $PESL\left( 2k, 2 \right) = PSL\left( 2k, 2 \right)$~ and~ an~ index \-
$\left[ Z\left( ESL\left( 2k, 2 \right) \right):Z\left( SL\left( 2k, 2 \right) \right) \right]=2$.}
\end{proposition}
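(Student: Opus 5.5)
The plan is to reduce everything to the description of the centre as the group of scalar matrices of determinant $\pm1$, and then to count such scalars. The first step is to show that $Z(ESL(n,p))$ consists of scalar matrices. Since $SL(n,p)\le ESL(n,p)\le GL(n,p)$, any central element $Z$ of $ESL(n,p)$ commutes with every transvection $t_{ij}=E+e_{ij}$. From $Zt_{ij}=t_{ij}Z$ we get $Ze_{ij}=e_{ij}Z$ for all $i\neq j$. This forces $Z$ to be diagonal with equal diagonal entries, so $Z=\lambda E$. Conversely, every scalar in $ESL(n,p)$ is central. Hence
\[
Z(ESL(n,p))=\{\lambda E:\lambda^n=\pm1\},\qquad Z(SL(n,p))=\{\lambda E:\lambda^n=1\}.
\]

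For $n=2$ and $p=4k+3$, a scalar $\lambda E$ lies in $Z(ESL(2,p))$ iff $\lambda^2=1$ or $\lambda^2=-1$. The second equation has no solution, because $-1$ is a quadratic residue mod $p$ exactly when $p\equiv1 \pmod 4$ (Euler's criterion: $(-1)^{(p-1)/2}=(-1)^{2k+1}=-1$). Therefore $Z(ESL(2,p))=\{E,-E\}=Z(SL(2,p))$.

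For normality, $SL(2,p)$ is the kernel of $\det:ESL(2,p)\to\{\pm1\}$, so it is normal of index $2$. Let $\pi:ESL(2,p)\to PESL(2,p)$ be the quotient map. Since the two centres coincide, $\pi(SL(2,p))=SL(2,p)/Z(SL(2,p))=PSL(2,p)$. The image of a normal subgroup under a surjection is normal, so $PSL(2,p)\triangleleft PESL(2,p)$, and it has index $2$.

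For $p=2$ we have $-1=1$ in $\mathbb{F}_2$, so the condition $\det=\pm1$ is just $\det=1$. Thus $ESL(2k,2)=SL(2k,2)=GL(2k,2)$, and passing to quotients by the (equal) centres gives $PESL(2k,2)=PSL(2k,2)$.

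\textbf{Main obstacle: the index statement.} A literal computation gives index $1$, not $2$. Over $\mathbb{F}_2$ the only scalar is $E$, so $Z(ESL(2k,2))=Z(SL(2k,2))=\{E\}$. I would therefore first check what the author means. The likely intent is that the centre is counted formally as $\{E,-E\}$, i.e.\ as the image of the sign subgroup $\langle -E\rangle$ lifted from characteristic zero, against the trivial centre of $SL$. In that reading the index $2$ records the formal extension by $\det=-1$, which collapses over $\mathbb{F}_2$. If no such convention is intended, the claim as written is false and should read $\left[Z(ESL(2k,2)):Z(SL(2k,2))\right]=1$; the proof would then state that corrected value.
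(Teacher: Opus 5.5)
Your argument for $p=4k+3$ is essentially the paper's (the second of the two proofs it gives for this proposition). The centre consists of scalars $\alpha E$, and $\alpha^2=-1$ has no solution in $\mathbb{F}_p$. Hence $Z(ESL(2,p))=Z(SL(2,p))=\{\pm E\}$, and $PSL(2,p)$ is a normal subgroup of index $2$ in $PESL(2,p)$. You are more careful than the paper in two places where it only asserts. You derive that central elements are scalar from commutation with the transvections $t_{ij}$. You obtain normality from $SL(2,p)=\ker\det$ pushed through the quotient map.

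On the case $p=2$ you are right and the paper is not. Your suspicion about where the $2$ comes from is essentially correct. The paper argues that every element of $\mathbb{F}_2$ is a square, so $\alpha^2=-1$ is solvable, and concludes that this ``doubles'' the centre. But the solution is $\alpha=1$, and since $-1=1$ in $\mathbb{F}_2$ the conditions $\alpha^{2k}=1$ and $\alpha^{2k}=-1$ are one and the same, so the scalar $E$ is simply counted twice. Because $\mathbb{F}_2^{\times}=\{1\}$, we get $Z(ESL(2k,2))=Z(SL(2k,2))=\{E\}$, and the index is $1$. This is an error rather than a convention, so you need not hedge: say plainly that the index claim in the statement is false and should read $1$. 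The equality $PESL(2k,2)=PSL(2k,2)$ does hold, but for your reason: $ESL(2k,2)=SL(2k,2)=GL(2k,2)$ with trivial centre. The paper's reason (the quotient ``decreases twice'') would require $|ESL(2k,2)|=2\,|SL(2k,2)|$, which is false.
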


\begin{proof}
Since -1 is a square residue in $F_p$, then the equation ${{a}^{2}}=-1$, where $k>1$ has only trivial solutions in ${{\text{F}}_{p}}$,  the center of $ESL\left( n,p \right)$ is the same as $\left[ Z\left( ESL\left(n, p \right) \right)\,\,:\,\,Z\left( SL\left( n,p \right) \right) \right]=2$. As a consequence, $Z(ESL(2,p)) \simeq Z(SL(2,p))$ and  for the quotient groups $[PESL\left( n,p \right): PSL\left( n,p \right)]=2$ holds.
\end{proof}

\begin{theorem}
 For $n=2$, provided $\left( \frac{-1}{p} \right)=1$, which corresponds to $p=4k+1$, we have $\left[ Z\left( ESL\left( n,p \right) \right)\,\,:\,\,Z\left( SL\left( n,p \right) \right) \right]=2$, and $PESL\left( n,p \right)=PSL\left( n,p \right)$. Furthermore, $PESL\left( n, p\right)$ is simple, except special cases of $(n, p)$.
\end{theorem}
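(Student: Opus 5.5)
The plan is to compute both centers explicitly as groups of scalar matrices, then compare the quotients through the natural map $SL(2,p)\to ESL(2,p)/Z(ESL(2,p))$, and finally invoke the classical simplicity of $PSL(2,p)$.

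\textbf{Step 1: the centers.} First I show that $Z(ESL(2,p))$ consists of scalar matrices. Any central element $M$ commutes in particular with the transvections $t_{12}=E+e_{12}$ and $t_{21}=E+e_{21}$, which lie in $SL(2,p)\subset ESL(2,p)$. Commuting with $e_{12}$ forces $M$ to be upper triangular with equal diagonal entries. Commuting with $e_{21}$ then kills the off-diagonal entry. Hence $M=aE$, and conversely every scalar matrix is central.

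The determinant condition now reads $a^2=\pm1$. For $SL(2,p)$ this gives $a^2=1$, so $Z(SL(2,p))=\{\pm E\}$. For $ESL(2,p)$ we also allow $a^2=-1$. Since $\left(\frac{-1}{p}\right)=1$, i.e. $p=4k+1$, there is an $\iota\in\mathbb{F}_p$ with $\iota^2=-1$, and the solutions are exactly $\pm\iota$. Thus
$$Z(ESL(2,p))=\{\pm E,\pm\iota E\},$$
a cyclic group of order $4$ generated by $\iota E$. This gives the index $2$.

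\textbf{Step 2: $PESL(2,p)=PSL(2,p)$.} Consider the composite $\theta: SL(2,p)\hookrightarrow ESL(2,p)\twoheadrightarrow PESL(2,p)$.

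The kernel of $\theta$ is $SL(2,p)\cap Z(ESL(2,p))=\{\pm E\}$, since $\det(\iota E)=\iota^2=-1$.

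For surjectivity, I use that $SL(2,p)$ has index $2$ in $ESL(2,p)$, being the kernel of $\det: ESL(2,p)\to\{\pm1\}$. Since $\iota E$ has determinant $-1$, we get $SL(2,p)\cdot Z(ESL(2,p))=ESL(2,p)$. Hence $\theta$ is onto, and $PESL(2,p)\simeq SL(2,p)/\{\pm E\}=PSL(2,p)$.

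This is exactly where the hypothesis $p=4k+1$ enters. For $p=4k+3$ no scalar of determinant $-1$ exists, and the quotient instead has index $2$, as in the preceding Proposition.

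\textbf{Step 3: simplicity.} By Step 2 it suffices to know that $PSL(2,p)$ is simple, which is the classical theorem of Jordan–Dickson for $p\ge 5$. The only exceptions for $n=2$ are $p=2,3$, where $PSL(2,2)\simeq S_3$ and $PSL(2,3)\simeq A_4$. These cannot occur under $p\equiv1 \pmod 4$, and they constitute the "special cases of $(n,p)$" in the statement. If the statement is read for general $n$, the same computation applies. The center of $ESL(n,p)$ is $\{aE: a^n=\pm1\}$, and $PESL(n,p)=PSL(n,p)$ exactly when some scalar has determinant $a^n=-1$. Simplicity then follows from the simplicity of $PSL(n,p)$, with exceptions $(2,2)$ and $(2,3)$.

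\textbf{Main obstacle.} The only genuinely nontrivial input is the simplicity of $PSL(2,p)$. I would quote it rather than reprove it. If a self-contained argument were wanted, I would use Iwasawa's criterion: the action on the projective line $\mathbb{P}^1(\mathbb{F}_p)$ is doubly transitive, the point stabilizer contains the abelian normal subgroup of transvections $\langle t_{12}\rangle$, and $SL(2,p)$ is perfect for $p\ge5$. The last condition is the step needing care. The rest of the argument is the elementary centralizer computation of Step 1.
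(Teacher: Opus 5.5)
Your proposal is correct and follows the same route as the paper: for $p\equiv 1 \pmod 4$ the scalars $\pm\iota E$ with $\iota^2=-1$ join $\pm E$ in the center and double it, which forces $PESL(2,p)=PSL(2,p)$, and simplicity is then inherited from the classical result for $PSL(2,p)$. Your write-up makes explicit two points that the paper's one-line argument leaves implicit: that both centers consist of scalar matrices, and that $\det(\iota E)=-1$ gives $SL(2,p)\cdot Z(ESL(2,p))=ESL(2,p)$, which is the real reason the projective quotients coincide. You also correctly note that $p\equiv 1 \pmod 4$ forces $p\ge 5$, so no exceptional case actually occurs.
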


\begin{proof}
In view of the equation ${{a}^{2}}=-1$, where $k>1$ has non-trivial solutions in ${{\text{F}}_{p}}$, since -1 is a square residue, the center of $ESL\left( n,p \right)$ has become twice as large as $\left[ Z\left( ESL\left(n, p \right) \right)\,\,:\,\,Z\left( SL\left( n,p \right) \right) \right]=2$. As a consequence, the quotient groups $PESL\left( n,p \right)=PSL\left( n,p \right)$.
\end{proof}






\begin{center}
Diagram, for the case $ESL\left( n, q\right)$, $\mathbb{F}_q, q=p^m$,  $(-1)^{\left(\frac{q-1}{\gcd(n, q-1)} \right)}=1$, $n= 2k$
\end{center}
\begin{equation}\label{diag:my_diagram}
\xymatrix{
SL(n, \mathbb{F}_q) \ar @{^{(}->}@<-1ex>^{\phi} [r] \ar@{.>}[dr]|-{(i)}  \ar @{->>} [d] ^{\rho }
& ESL(n,\mathbb{F}_q)  \ar @{->>}  [d] ^{\psi}  \\
PSL(n,\mathbb{F}_q)                       & PESL(n, \mathbb{F}_q) \ar @{<<->>}[l]_{=} }
\end{equation}
 An epimorphism $\rho$ has kernel subgroup of scalar matrices with $det(A)=1$.

\begin{theorem}
 For $n=2k$, $q=p^m$, provided $(-1)^{\left( \frac{q-1}{gcd(q-1,n)} \right)}=1$, we have $\left[ Z\left( ESL\left( n,q \right) \right)\,\,:\,\,Z\left( SL\left( n,q \right) \right) \right]=2$, and $PESL\left( n,q \right)=PSL\left( n,q \right)$.
\end{theorem}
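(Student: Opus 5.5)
The plan is to identify both centers explicitly as groups of scalar matrices inside the cyclic group $\mathbb{F}_q^{*}$, and to translate the hypothesis $(-1)^{\frac{q-1}{\gcd(q-1,n)}}=1$ into the solvability of $a^n=-1$. I assume $q$ odd throughout: for $p=2$ we have $-1=1$, so $ESL(n,q)=SL(n,q)$ and the index is $1$, a case that has to be excluded.

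\emph{Step 1: both centers consist of scalars.} Since $SL(n,q)\subseteq ESL(n,q)\subseteq GL(n,q)$, any central element of $ESL(n,q)$ commutes with every transvection $t_{ij}$. I will check directly that commuting with all $t_{ij}=E+e_{ij}$ forces a matrix to be scalar: $Me_{ij}=e_{ij}M$ kills the off-diagonal entries and equalizes the diagonal ones. Conversely, scalars are central in $GL(n,q)$. Hence
\[
Z(ESL(n,q))=\{aE : a^n=\pm1\},\qquad Z(SL(n,q))=\{aE : a^n=1\}.
\]

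\emph{Step 2: the index is $2$.} The map $a\mapsto a^n$ on the cyclic group $\mathbb{F}_q^{*}$ of order $q-1$ has kernel of order $d=\gcd(n,q-1)$. Its image is the unique subgroup of order $(q-1)/d$. An element $x$ lies in that subgroup iff $x^{(q-1)/d}=1$, so the hypothesis says exactly that $-1$ is an $n$-th power. Thus the set $\{a: a^n=-1\}$ is a nonempty coset of $\{a: a^n=1\}$, disjoint from it because $-1\neq 1$. Therefore $|Z(ESL)|=2|Z(SL)|$, which gives the claimed index $2$. I expect this step, pinning down precisely what the exponent condition means, to be the main point; once it is read as ``$-1\in(\mathbb{F}_q^{*})^n$'', everything else is routine.

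\emph{Step 3: $PESL(n,q)=PSL(n,q)$.} Fix $a$ with $a^n=-1$. For any $M\in ESL(n,q)$ with $\det M=-1$, the matrix $a^{-1}M$ has determinant $1$. Hence $ESL(n,q)=SL(n,q)\cdot Z(ESL(n,q))$. By the second isomorphism theorem,
\[
PESL(n,q)=ESL/Z(ESL)\cong SL/(SL\cap Z(ESL))=SL/Z(SL)=PSL(n,q),
\]
using that $SL\cap Z(ESL)$ is the set of scalars of determinant $1$, i.e.\ $Z(SL)$. This is exactly the equality indicated in diagram \eqref{diag:my_diagram}: $\psi\circ\phi$ is surjective with kernel $\ker\rho$.
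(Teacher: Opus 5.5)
Your proof is correct and follows essentially the paper's route: both identify the centers as scalar matrices and reduce the claim to the solvability of $x^n=-1$ in the cyclic group $\mathbb{F}_q^{*}$. The paper does this by exhibiting the explicit root $g^{\frac{q-1}{2\gcd(q-1,n)}}$ for a generator $g$, whereas you use the image of the $n$-th power map; you also supply what the paper leaves implicit (why central elements are scalar, the coset count giving index exactly $2$, and the second isomorphism theorem for $PESL(n,q)\cong PSL(n,q)$), and you rightly exclude $p=2$, where $ESL(n,q)=SL(n,q)$.
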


\begin{proof}
The center $Z\left( ESL\left( n,q \right) \right)$ consists of scalar matrices so we consider an equation $x^n=\pm 1$.

The proof is based of the Fermat's theorem and the fact that multiplicative group of $\mathbb{F}_{q}$ is cyclic so $g^{{p-1}}=1$, therefore  $g^{\frac{p-1}{2}}=-1$ where $g$ is generator. This entails $(g^{\left( \frac{q-1}{2gcd(q-1,n)} \right)})^n=-1$ which implies that $\frac{q-1}{gcd(q-1,n)}$ is even too $\frac{q-1}{2gcd(q-1,n)} \in \mathbb{N}$.

The necessary of this condition follows from that solution of equation $x^m=-1$ and from the equation obtained by exponenting of this equation to $\frac{q-1}{2gcd(q-1,m)}$ power $(x^m)^{\left(n \frac{q-1}{2gcd(q-1,m)} \right)}=(x^m)^{t(p-1)}= 1$ by Fermat's theorem. Therefore fraction $\frac{q-1}{gcd(q-1,m)}$ have to be even for existence of solution in $\mathbb{F}_q$.
\end{proof}

\begin{proposition}
    If $-1$ is not a quadratic residue in $F_p$, provided $p=4k+3$ and $p>2$, then  $Z(ESL(2,p)) \simeq Z(SL(2,p))$ for $k \in \mathbb{N}$, and furthermore, $PSL\left( 2, p \right)\triangleleft PESL\left( 2, p \right)$.

In~ the~ case~ $p=2$,~  $PESL\left( 2k, 2 \right) = PSL\left( 2k, 2 \right)$~ and~ an~ index \-
$\left[ Z\left( ESL\left( 2k, 2 \right) \right):Z\left( SL\left( 2k, 2 \right) \right) \right]=2$.
\end{proposition}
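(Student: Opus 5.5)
The plan is to compute both centers explicitly as groups of scalar matrices and then pass to the quotients. First I would show that $Z(ESL(n,p))$ consists of scalar matrices. Since $SL(n,p) \le ESL(n,p)$ contains all transvections $t_{ij}$, any $M$ central in $ESL(n,p)$ commutes with every $t_{ij}=E+e_{ij}$. The relation $Me_{ij}=e_{ij}M$ for all $i\neq j$ forces $M$ to be diagonal with equal diagonal entries, so $M=aE$. The determinant condition then gives
\[
Z(ESL(n,p))=\{aE : a^n=\pm 1\}, \qquad Z(SL(n,p))=\{aE : a^n=1\}.
\]

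For $n=2$ and $p=4k+3$, I would use the standard fact that $-1$ is a quadratic residue modulo $p$ if and only if $4 \mid p-1$. Equivalently, since $\mathbb{F}_p^{*}$ is cyclic of order $p-1$, an element of order $4$ exists only when $4\mid p-1$; this is the same Fermat-type argument used in the preceding theorem. Hence $a^2=-1$ has no solution in $\mathbb{F}_p$. The only scalars with $a^2=\pm1$ are then $a=\pm1$, so $Z(ESL(2,p))=\{E,-E\}=Z(SL(2,p))$.

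For the normality claim, I would take the natural map $SL(2,p)\to ESL(2,p)\to PESL(2,p)$. Its kernel is $SL(2,p)\cap Z(ESL(2,p))=Z(SL(2,p))$, so it induces an embedding $PSL(2,p)\hookrightarrow PESL(2,p)$. The image of the normal subgroup $SL(2,p)$ under the surjection $\psi$ is normal in $PESL(2,p)$. Moreover, because the centers coincide,
\[
[PESL(2,p):PSL(2,p)]=[ESL(2,p):SL(2,p)]=2,
\]
where the last index is $2$ because $D_1$ has determinant $-1\neq 1$. An index-$2$ subgroup is normal, which gives $PSL(2,p)\triangleleft PESL(2,p)$ a second way.

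For $p=2$, the key observation is that $-1=1$ in $\mathbb{F}_2$. The defining condition $\det=\pm1$ therefore reduces to $\det=1$, so $ESL(2k,2)=SL(2k,2)=GL(2k,2)$ and hence $PESL(2k,2)=PSL(2k,2)$ immediately.

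The main obstacle is the index statement in this last case. By the first step the only scalar in $GL(2k,2)$ is $E$, so both centers are trivial and the literal index is $1$, not $2$. I would therefore check whether the intended meaning is a count of the formal conditions $a^n=1$ and $a^n=-1$, which coincide over $\mathbb{F}_2$. If no such reading is intended, I would record that the index equals $1$ and that only the equality $PESL(2k,2)=PSL(2k,2)$ survives as stated.
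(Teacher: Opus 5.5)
Your treatment of the case $p=4k+3$ is the paper's argument. The center of $ESL(2,p)$ is scalar, and $\alpha^2=-1$ has no root in $\mathbb{F}_p$. Hence $Z(ESL(2,p))=\{\pm E\}=Z(SL(2,p))$, and $PSL(2,p)$ sits in $PESL(2,p)$ as a normal subgroup of index $2$. You also supply two things the paper only asserts: why central elements are scalar (they commute with every $E+e_{ij}$), and why $SL(2,p)\to PESL(2,p)$ has kernel exactly $Z(SL(2,p))$. One cosmetic point: in dimension $2$ the determinant $-1$ witness should be $\mathrm{diag}(-1,1)$, not the $3\times 3$ matrix $D_1$.

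For $p=2$ your diagnosis is correct, and you should not look for a reading that rescues the index $2$. The paper's proof argues that $\alpha^2=-1$ is solvable in $\mathbb{F}_2$ and that this doubles the center. But the solution is $\alpha=1$, which gives $E$, already in $Z(SL(2k,2))$. Counting the conditions $\alpha^n=1$ and $\alpha^n=-1$ separately, as in your proposed ``formal'' reading, is exactly that error rather than an intended meaning. Since $ESL(2k,2)$ and $SL(2k,2)$ are the same group, their centers coincide (both equal $\{E\}$), so the index is $1$ and that part of the statement is false.

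Likewise, the paper derives $PESL(2k,2)=PSL(2k,2)$ from a halving of the center that does not occur. Your derivation directly from $ESL(2k,2)=SL(2k,2)$ is the valid one. So your proposal proves everything in the proposition that is true and correctly isolates the one false claim.
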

 \begin{proof}
Since the center of $ESL(2,p)$ consists of scalar matrices, it can contain only matrices of the form
$\left( \begin{matrix}
   \alpha & 0  \\
   0 & \alpha  \\
\end{matrix} \right)$, but in this case $-1$ is not a residue in ${{\text{F}}_{p}}$, there are no solutions in $F_p$ for the equation ${{\alpha}^{2}}\equiv -1(\bmod p)$. Consequently, there are only scalar matrices $\alpha E$ having $\det (A) =\alpha^2 =  1$ in the center of $Z(ESL_2(\mathbb{F}_p))$, that is why it coincides with $Z(SL_2(\mathbb{F}_p))$,
this yields $PSL\left( 2k,p \right)\triangleleft PESL\left( 2k,p \right)$ as a subgroup of indeed 2. The same is true in any even dimension $n=2k$.

In the case $p=2$, taking into account that in $\mathbb{F}_2$ each element is a square residue, the equation $\alpha^2 = - 1$ mentioned above is solvable, that entails a doubling of the center $Z(ESL_2(\mathbb{F}_p))$. Now it is obvious that \\ $\left[ Z\left( ESL\left( 2k, 2 \right) \right): Z\left( SL\left( 2k, 2 \right) \right) \right]  =2$ and as a consequence $PESL\left( 2k, 2 \right)$ as a quotient of $ESL\left( 2k, 2 \right)$ by its center decreases twice, therefore $PESL\left( 2k, 2 \right) = PSL\left( 2k, 2 \right)$.
\end{proof}

\begin{center}
\textbf{For $\left( \frac{-1}{p} \right)=-1$ (The case $n= 2k$)}
\end{center}
\begin{equation}\label{diag:my_diagram}
\xymatrix{
SL(n, \mathbb{F}_p) \ar @{^{(}->}@<-1ex>^{\phi} [r] \ar@{.>}[dr]|-{(i)}  \ar @{->>} [d] ^{\rho }
& ESL(n,\mathbb{F}_p)  \ar @{->>}  [d] ^{\psi}  \\
PSL(n,\mathbb{F}_p)   \ar @{^{(}->}@<-1ex>^{\xi} [r]                 & PESL(n, \mathbb{F}_p)  }
\end{equation}

\begin{theorem}\label{n=2k+1}
For $n=2k+1$ we have $PESL\left( n,p \right)=PSL\left( n,p \right)$ an index of a center $\left[ Z\left( ESL\left( 2k+1,p \right) \right):Z\left( SL\left( 2k+1,p \right) \right) \right]=2$. Furthermore, $PESL\left( n, p\right)$ is simple, except for special cases of $( n, p )$.
\end{theorem}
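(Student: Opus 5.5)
The plan is to exhibit $ESL(2k+1,p)$ as a central product of $SL(2k+1,p)$ with the scalar matrix $-E$, exactly as was done for $D_{1,2,3}=D_1D_2D_3=-E$ in the case $n=3$. Throughout I assume $p>2$, since for $p=2$ one has $-E=E$ and $ESL=SL$. That case must be listed among the special cases $(n,p)$; there the index is $1$, not $2$. I also assume $n=2k+1\geq 3$.

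\emph{Step 1 (the direct decomposition).} Since $n$ is odd, $\det(-E)=(-1)^{n}=-1$, so $-E\in ESL(n,p)\setminus SL(n,p)$. Because $[ESL(n,p):SL(n,p)]=2$ (the kernel of $\det:ESL\to\{\pm1\}$), we get $ESL(n,p)=\langle -E\rangle\cdot SL(n,p)$. Moreover $\langle -E\rangle\cap SL(n,p)=\{E\}$, and $-E$ is central. Hence
\[ESL(n,p)\simeq \langle -E\rangle \times SL(n,p),\]
generalizing the decomposition $\langle D_{1,2,3}\rangle\times SL_3(\mathbb{F}_p)$ stated above.

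\emph{Step 2 (the centers).} $SL(n,p)$ acts absolutely irreducibly on $\mathbb{F}_p^n$, and $ESL(n,p)\supseteq SL(n,p)$. So any central element commutes with all transvections $t_{ij}$ and is therefore scalar. Thus
\[Z(ESL(n,p))=\{\alpha E:\alpha^n=\pm1\},\qquad Z(SL(n,p))=\{\alpha E:\alpha^n=1\}.\]
The map $\alpha E\mapsto \alpha^n=\det(\alpha E)$ is a homomorphism $Z(ESL)\to\{\pm1\}$ with kernel $Z(SL)$. It is surjective because $-E$ maps to $-1$. Therefore $[Z(ESL(n,p)):Z(SL(n,p))]=2$; equivalently, by Step 1, $Z(ESL)=\langle -E\rangle\times Z(SL)$.

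\emph{Step 3 (the quotients).} By Step 1, $ESL=Z(ESL)\cdot SL$. The second isomorphism theorem then gives
\[PESL(n,p)=ESL/Z(ESL)\simeq SL/\bigl(SL\cap Z(ESL)\bigr)=SL/Z(SL)=PSL(n,p).\]
Here $SL\cap Z(ESL)=Z(SL)$ holds because a scalar $\alpha E$ lies in $SL$ iff $\alpha^n=1$. The isomorphism is induced by the inclusion $\phi$, so the two right-hand vertical arrows of the diagram close up into an identification, as in the even case with $\left(\frac{-1}{p}\right)=1$.

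\emph{Step 4 (simplicity).} Once $PESL(n,p)=PSL(n,p)$ is established, simplicity is the classical theorem of Jordan--Dickson. It states that $PSL(n,q)$ is simple for $n\geq 2$ except $(n,q)=(2,2),(2,3)$. Since $n$ is odd and $n\geq 3$, no exception from that list occurs. The only genuine special cases are the degenerate ones already excluded, $n=1$ and $p=2$; for the latter the group is still $PSL(n,2)$ and simple, but the index statement fails.

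I expect the main obstacle to be Step 2, namely justifying rigorously that the center of $ESL$ is scalar. I would do this by commuting a central $A$ with every $t_{ij}=E+e_{ij}$. This forces $e_{ij}A=Ae_{ij}$ for all $i\neq j$, which yields that $A$ is diagonal with equal entries. The remaining steps are formal.
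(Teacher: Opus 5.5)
Your proof is correct and takes essentially the same route as the paper. The paper also uses that $-E$ (the nontrivial solution $a=-1$ of $a^{2k+1}=-1$) is a central element of determinant $-1$ in odd dimension, which gives both $[ESL:SL]=2$ and $[Z(ESL):Z(SL)]=2$; it then identifies $PESL(n,p)$ with $PSL(n,p)$ and invokes the simplicity of $PSL$. Your version supplies what the paper's proof leaves out: it justifies the identification via $ESL=Z(ESL)\cdot SL$ and the second isomorphism theorem, rather than only comparing indices, and it correctly excludes $p=2$, where the index is $1$ although the paper asserts the claim for every $p>1$.
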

Taking into account that equation ${{a}^{2k+1}}=-1$ has non-trivial solutions in every ${{\text{F}}_{p}}$, $p>1$, then the number of scalar matrices with $\det(A)=-1$ is equal to the number of scalar matrices with $\det(A)=1$.
Then the number of scalar matrices with $\det(A)=-1$ coincides with the number of scalar matrices with $\det(A)=1$, therefore
$\left[  ESL\left( 2k+1,p \right): \\  SL\left( 2k+1,p \right) \right]=2$.  This determines the center index $\left[ Z\left( ESL\left( 2k+1,p \right) \right): \\ Z\left( SL\left( 2k+1,p \right) \right) \right]=2$.
Therefore, $PESL\left( n,p \right)=PSL\left( n,p \right)$ in this case.
Thus, if in the case $PSL\left( n,p \right)$ is simple, (here are exceptions $ PSL(2,2), PSL(2,3)$)  
then in accordance with Definition \eqref{quasimple} $ESL(2k+1,p)$ because $Inn (ESL(2k+1,p)) \simeq ESL(2k+1,p) / Z(ESL(2k+1,p)) \simeq PSL(2k+1,p)$.
Based on the evidence presented above, the following commutative diagrams are in place for $n=3$.
\begin{center}
\textbf{Commuting diagram for $n=2k+1$ is similar for the case $n= 3$}
\end{center}
\begin{equation}\label{diag:3,Z}
\xymatrix{
SL(3, \mathbb{Z}) \ar @{^{(}->}@<-1ex>^{\phi} [r] \ar@{.>}[dr]|-{(i)}   \ar @{<->}[d]_{\simeq}^{\rho}
& ESL(3,\mathbb{Z}) \ar @{->>}@<-1ex>[l]_{\mathfrak{C}} \ar @{->>}  [d] ^{\psi}  \\
PSL(3,\mathbb{Z})                        & PESL(3, \mathbb{Z}) \ar @{<<->>}[l]_{=} }
\end{equation}  
Here the equality $PESL(2k+1,\mathbb{Z}) = PSL(2k+1,\mathbb{Z})$ is provided via \\ $\left[ Z\left( ESL\left( 2k+1,p \right) \right):Z\left( SL\left( 2k+1,p \right) \right) \right]=2$ stated in the Theorem \ref{n=2k+1} and the same index of whole group $SL\left( 2k+1,p \right)$ namely \\ $\left[  ESL\left( 2k+1,p \right):  SL\left( 2k+1,p \right)  \right]=2$. The isomorphism $\rho$ is justified in Remark \ref{SLn=2k+1}.





\begin{center}
\textbf{For $\left( \frac{-1}{p} \right)=1$ (The case $n= 2$)}
\end{center}
\begin{equation}\label{diag:my_diagram}
\xymatrix{
SL(n, \mathbb{F}_p) \ar @{^{(}->}@<-1ex>^{\phi} [r] \ar@{.>}[dr]|-{(i)}  \ar @{->>} [d] ^{\rho }
& ESL(n,\mathbb{F}_p)  \ar @{->>}  [d] ^{\psi}  \\
PSL(n,\mathbb{F}_p)                       & PESL(n, \mathbb{F}_p) \ar @{<<->>}[l]_{=} }
\end{equation}


The homomorphism $\mathfrak{C}$ on the diagram \ref{diag:3,Z} from $ESL_{2k+1}(p)$ onto $SL_{2k+1}(p)$ exists due to the large variety of involutions in $SL_{2k+1}(p)$ that generate entire $SL_{2k+1}(p)$.
This surjection is equality $\mathfrak{C}(A) = A$ if $det(A)=1$ and $\mathfrak{C}(A) = -A$ if $det(A)=-1$.
 The involutive generating set of $ESL_{2k+1}(p)$ maps to the involutions of $SL_{2k+1}(p)$, and only one non-trivial involution $D_{1,2,3}$ is included in the kernel of this homomorphism $\psi$.

In exceptional case $n=2$ homomorphism $ESL_{2}(p) \xrightarrow{\chi} SL_{2}(p)$ maps this group in subgroup $\langle E, -E\rangle$ because there are no another elements of order 2 in $SL_{2}(\mathbb{Z})$ but only these two elements of order two are images of involutive generating set of $ESL_{2}(\mathbb{Z})$. Therefore all generators from involutive generating set of $ESL_{2}(\mathbb{Z})$ maps in the subgroup of two matrices $\langle E, -E\rangle$.

\section{Involutive generating set.}

\begin{proposition}
{\
The \textbf{minimal generating set} of $ESL_3[\mathbb{Z}]$ is $\langle {M}_{6}, t_{12}\rangle$
\begin{center}
  ${{M}_{6}}=\left( \begin{matrix}
   0 & 1 & 0  \\
   0 & 0 & 1  \\
   -1 & 0 & 0  \\
\end{matrix} \right)$ and $t_{12}=\left( \begin{matrix}
   1 & 1 & 0  \\
   0 & 1 & 0  \\
   0 & 0 & 1  \\
\end{matrix} \right),$
\end{center}
which possess the relations ${M}_{6}t_{12} {M}^{-1}_{6}=t^{-1}_{31}, \, {M}_{6}t_{31} {M}^{-1}_{6}=t^{-1}_{23} $, ${M}_{6}t_{23} {M}^{-1}_{6}=t^{-1}_{12}$, ${M}_{6}t_{13}{M}^{-1}_{6}=t^{-1}_{32}$, ${M}_{6}t^{-1}_{23} {M}^{-1}_{6}=t^{-1}_{31}$, ${M}^6_{6}=E$, ${M}^3_{6}=-E$, as well as the relations between transvections $[t_{ij}, t_{jk}]=t_{ik}$, wherein \, $i\neq k$, and \, $[t_{ij}, t_{kl}]=e$ for $i\neq l$ and $k \neq j$ }.
  \end{proposition}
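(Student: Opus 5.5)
The plan is to show that $H=\langle M_6, t_{12}\rangle$ is all of $ESL_3[\mathbb{Z}]$ in three stages. First I produce every elementary transvection $t_{ij}$ inside $H$. Then I invoke the classical fact that these transvections generate $SL_3(\mathbb{Z})$. Finally I adjoin one element of determinant $-1$. Minimality needs no separate argument: $ESL_3[\mathbb{Z}]$ is non-abelian, hence non-cyclic, so no single element generates it. This is exactly the remark made after Proposition 1.

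\emph{Conjugation relations.} First I compute the action of $M_6$ on the standard basis. From its columns, $M_6e_1=-e_3$, $M_6e_2=e_1$ and $M_6e_3=e_2$. So $M_6$ is a signed cyclic permutation matrix, which gives $\det M_6=-1$, $M_6^3=-E$ and $M_6^6=E$. Since $M_6^{-T}=M_6$, conjugation of a matrix unit acts by
\[
M_6e_{ij}M_6^{-1}=(M_6e_i)(M_6e_j)^{T}=\pm e_{\sigma(i)\sigma(j)},\qquad \sigma=(1\,3\,2),
\]
where the sign records whether the index $1$ appears among $i,j$. For instance $e_{12}\mapsto -e_{31}$, so $M_6t_{12}M_6^{-1}=t_{31}^{-1}$. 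The remaining listed relations follow from the same bookkeeping and are routine checks.

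\emph{All transvections.} Applying these relations, conjugates of $t_{12}$ by powers of $M_6$ give $t_{31}^{\pm1}$ and $t_{23}^{\pm1}$, hence $t_{12},t_{23},t_{31}\in H$. The Steinberg relations $[t_{ij},t_{jk}]=t_{ik}$ for $i\ne k$ then produce the other three. Taking $[t_{12},t_{23}]$ gives $t_{13}$, taking $[t_{23},t_{31}]$ gives $t_{21}$, and taking $[t_{31},t_{12}]$ gives $t_{32}$, each up to sign depending on the commutator convention. Any sign is harmless, since $H$ is a group and contains the inverses. As a consistency check, this matches Example \ref{inv}, where $t_{31}$ arises from products of involutions.

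\emph{Generation of $SL_3(\mathbb{Z})$ and the main obstacle.} Next I show that the six transvections $t_{ij}$ generate $SL_3(\mathbb{Z})$, which I expect to be the main step. The plan is Euclidean reduction by row operations. Left multiplication by $t_{ij}^{\pm1}$ adds $\pm$ row $j$ to row $i$. Repeated such operations run the Euclidean algorithm on the first column of $A\in SL_3(\mathbb{Z})$, whose entries are coprime, until the column becomes $(1,0,0)^T$. Clearing the first row then reduces the problem to an element of $SL_2(\mathbb{Z})$ in the lower block, which is handled the same way. To obtain a sign change without diagonal generators, I use that $\pm1$ and signed permutations with determinant $1$ are themselves products of transvections, for example $t_{12}t_{21}^{-1}t_{12}$. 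This reduction is standard, so the paper can simply cite it.

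\emph{Conclusion.} Hence $SL_3(\mathbb{Z})\le H$. Since $\det M_6=-1$ and $[ESL_3(\mathbb{Z}):SL_3(\mathbb{Z})]=2$, this gives $H=SL_3(\mathbb{Z})\cup M_6SL_3(\mathbb{Z})=ESL_3(\mathbb{Z})$. Equivalently, it realizes the splitting $\langle D_1\rangle\ltimes SL_3(\mathbb{Z})$ with $M_6$ playing the role of $D_1$ modulo the kernel.
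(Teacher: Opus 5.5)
\paragraph{Review.} For generation and minimality you follow the paper's route, and that part is correct. You conjugate $t_{12}$ by powers of $M_6$ and get the remaining transvections via commutators. You then use generation of $SL_3(\mathbb{Z})$ by elementary transvections; the paper cites Humphries, while you sketch the Euclidean reduction. Finally you adjoin a determinant $-1$ element as in Lemma~\ref{negativedet}, and minimality is trivial by non-cyclicity. This argument uses only the first two conjugation relations, and those are true.

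\paragraph{The gap.} The genuine problem is your sentence ``the remaining listed relations follow from the same bookkeeping and are routine checks.'' Two of them are false. Your own rule $M_6e_{ij}M_6^{-1}=\pm e_{\sigma(i)\sigma(j)}$, with $\sigma=(1\,3\,2)$ and a minus sign exactly when the index $1$ occurs, gives $M_6e_{23}M_6^{-1}=+e_{12}$. Hence
\[
M_6t_{23}M_6^{-1}=t_{12},\qquad M_6t_{23}^{-1}M_6^{-1}=t_{12}^{-1},
\]
whereas the statement asserts $t_{12}^{-1}$ and $t_{31}^{-1}$ respectively; direct multiplication confirms this. In fact the listed relations are mutually inconsistent:
\begin{itemize}
\item The first two give $M_6^2t_{12}M_6^{-2}=t_{23}$. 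The third would then yield $M_6^3t_{12}M_6^{-3}=t_{12}^{-1}$. Since $M_6^3=-E$ is central, this forces $t_{12}^2=E$, which is false over $\mathbb{Z}$.
\item The fifth contradicts the inverse of the third, which reads $M_6t_{23}^{-1}M_6^{-1}=t_{12}\neq t_{31}^{-1}$.
\end{itemize}

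\paragraph{What to do instead.} The paper's proof waves the relations through with the same ``simple multiplication check'', so the error is already in the statement. A proof should flag it rather than certify it, and replace relations three and five by the corrected versions above. The relations $M_6t_{13}M_6^{-1}=t_{32}^{-1}$, $M_6^3=-E$, $M_6^6=E$ and the Steinberg relations (for distinct indices) are fine. With that correction, your argument proves the corrected proposition.

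\paragraph{Minor point.} Your aside about Example~\ref{inv} plays no role in the proof. Also, the matrix displayed there is $E+e_{13}=t_{13}$, not $t_{31}$.
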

\begin{example}
Let $G= \langle M_5, t_{13} \rangle$. The order of the generated group is 11232 that is 2 times greater than the order of $SL_3(\mathbb{F}_3)$, and coincides with an order of $ESL_3(\mathbb{F}_3)$. Moreover $G \simeq ESL_3(\mathbb{F}_3) \simeq GL_3(\mathbb{F}_3)$.
\end{example}

\begin{proposition}
The following special cases are true $ESL_n(\mathbb{F}_5) \lhd GL_n(\mathbb{F}_5)$,    $[GL_n(\mathbb{F}_4) : ESL_n(\mathbb{F}_4)]=3$.
\end{proposition}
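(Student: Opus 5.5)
The plan is to describe $ESL_n(\mathbb{F}_q)$ as the preimage of a subgroup under the determinant map,
\[
\det : GL_n(\mathbb{F}_q) \to \mathbb{F}_q^{*},
\]
which is a surjective homomorphism with kernel $SL_n(\mathbb{F}_q)$. By definition, $ESL_n(\mathbb{F}_q)=\det^{-1}(\{1,-1\})$, and $\{1,-1\}$ is a subgroup of $\mathbb{F}_q^*$. Since $\mathbb{F}_q^*$ is abelian, every subgroup of it is normal. The preimage of a normal subgroup under a homomorphism is normal, so $ESL_n(\mathbb{F}_q)\lhd GL_n(\mathbb{F}_q)$ for every $q$; in particular this holds for $q=5$. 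It is also convenient to note directly that $\det(gAg^{-1})=\det A=\pm1$, which gives conjugation invariance without any further machinery.

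For the index, surjectivity of $\det$ yields the isomorphism
\[
GL_n(\mathbb{F}_q)/ESL_n(\mathbb{F}_q)\simeq \mathbb{F}_q^*/\{\pm1\},
\]
so
\[
[GL_n(\mathbb{F}_q):ESL_n(\mathbb{F}_q)] = \frac{q-1}{|\{\pm 1\}|}.
\]
For $q=5$ this index equals $4/2=2$, which re-confirms normality, since a subgroup of index $2$ is always normal. For $q=4$ the field $\mathbb{F}_4$ has characteristic $2$, so $-1=1$ and $\{\pm1\}=\{1\}$. Hence $ESL_n(\mathbb{F}_4)=SL_n(\mathbb{F}_4)$, and the index is $|\mathbb{F}_4^*|=3$.

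The only step requiring care is the characteristic-$2$ observation: the extension by $\det=-1$ collapses, so $ESL_n(\mathbb{F}_{2^m})=SL_n(\mathbb{F}_{2^m})$. This must be stated explicitly, since it contrasts with the odd-$p$ discussion earlier in the paper. Surjectivity of $\det$ is routine: the diagonal matrix $\mathrm{diag}(a,1,\dots,1)$ has determinant $a$, so every element of $\mathbb{F}_q^*$ is attained.
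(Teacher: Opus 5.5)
Your proof is correct and follows essentially the paper's route: the paper compares the determinant values $\{\pm1,\pm2\}$ of $GL_n(\mathbb{F}_5)$ with $\{\pm1\}$ for $ESL_n(\mathbb{F}_5)$ to get index $2$, hence normality, and handles $\mathbb{F}_4$ by ``similar reasoning''. Your version makes this rigorous through the surjectivity of $\det$ and the isomorphism $GL_n(\mathbb{F}_q)/ESL_n(\mathbb{F}_q)\simeq\mathbb{F}_q^*/\{\pm1\}$, states explicitly the characteristic-$2$ collapse $-1=1$ (so $ESL_n(\mathbb{F}_4)=SL_n(\mathbb{F}_4)$) that the paper leaves implicit, and shows that normality holds for every $q$, not just $q=5$.
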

\begin{proof}
    For $M \in GL_n(\mathbb{F}_5)$ the variety of determinant values is $V_5(GL_n)= \{1,-1,2,-2\}$ and for $M \in ESL_n(\mathbb{F}_5)$ this variety is $V_5(ESL)=\{1,-1\}$. The relation $V_5(GL_n) : V_5(ESL_n) =2$ that caused
$[GL_n(\mathbb{F}_5) : ESL_n(\mathbb{F}_5)]=2$. This completes the proof of $ESL_n(\mathbb{F}_5) \lhd GL_n(\mathbb{F}_5)$.

Similar reasoning $ESL_n(\mathbb{F}_4)$ leads us to the conclusion $[GL_n(\mathbb{F}_4) : ESL_n(\mathbb{F}_4)]=3$.
\end{proof}

Let $G= \langle M_5, t_{13} \rangle$. The order of the generated group is 11232 that is 2 times greater than the order of $SL_3(\mathbb{F}_3)$, and coincides with an order of $ESL_3(\mathbb{F}_3)$. Moreover $G \simeq SL_3(\mathbb{F}_3) \simeq GL_3(\mathbb{F}_3)$.

\begin{proof}
Proving the relations is a simple multiplication check. Due to the relations mentioned above all elementary transvections are presented in explicit form, for instance by the relation ${M}_{6}t_{12} {M}^{-1}_{6}=t^{-1}_{31}$ and the group axiom about an inverse element we get $t_{31}$. Thence
The transformation for well known generators \cite{Humph} allow us to generate $SL_3(\mathbb{F}_3)$.
 In view of Lemma \ref{negativedet} and generator $ M_5$ the generating set of $ESL_3[\mathbb{Z}]$ is constructed.
 The size of this generic set is minimal for non-cyclic group, so its minimality does not demand any proof.
\end{proof}



\begin{proposition}
    The involutive generating set for $ESL_5 (\mathbb{F}_2)$ consists of 4 matrices $i_{11}$, $i_{12}$, $D_0$, $B$. This set admits generalization for $ESL_n (\mathbb{F}_2)$, $n>4$.
\end{proposition}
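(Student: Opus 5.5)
Over $\mathbb{F}_2$ we have $-1=1$, so $ESL_5(\mathbb{F}_2)=SL_5(\mathbb{F}_2)=GL_5(\mathbb{F}_2)$. The claim is therefore that four explicit involutions generate $GL_5(\mathbb{F}_2)$. The matrices $i_{11}$, $i_{12}$, $D_0$, $B$ are not defined before the statement, so I would first fix them as follows.

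\begin{itemize}
\item $i_{12}$ is the reduction mod 2 of the involution $I_{12}$ introduced earlier. Since $-1=1$, this is the transvection $t_{12}=E+e_{12}$, which is an involution because $e_{12}^2=0$.
\item $i_{11}$ is a second transvection involution, say $t_{21}$.
\item $D_0$ and $B$ are permutation involutions. Their product should be the $5$-cycle permutation matrix $P_5$; for example $D_0=(1\,5)(2\,4)$ and $B=(1\,4)(2\,3)$ as permutation matrices.
\end{itemize}

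This follows the dihedral trick: any dihedral group $\langle D_0,B\rangle \simeq D_5$ contains an $n$-cycle as a product of two involutions.

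The proof would then run in three steps.

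\textbf{Step 1: all transvections.} By the relations $[t_{ij},t_{jk}]=t_{ik}$ from the proposition on $ESL_3[\mathbb{Z}]$ (valid verbatim mod 2), together with Example \ref{inv}, which expresses a transvection through two non-commuting involutions, it suffices to produce the elementary transvections $t_{i,i+1}$ and $t_{i+1,i}$. Conjugating $t_{12}$ and $t_{21}$ by powers of $P_5=D_0B$ cyclically shifts indices. This yields $t_{i,i+1}$ and $t_{i+1,i}$ for all $i$ modulo $5$.

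\textbf{Step 2: the whole group.} Commutators of the transvections from Step 1 give every $t_{ij}$, $i\neq j$. Elementary transvections generate $SL_n$ over a field \cite{Humph}, so the four involutions generate $SL_5(\mathbb{F}_2)=ESL_5(\mathbb{F}_2)$.

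\textbf{Step 3: generalization to $n>4$.} I would replace $D_0,B$ by the two reflections of the dihedral group of the $n$-gon, namely $D_0: k\mapsto n+1-k$ and $B: k\mapsto n-k \pmod n$, whose product is the $n$-cycle. Steps 1 and 2 then go through unchanged.

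Minimality of the involutive set should also be addressed. Two involutions generate only a dihedral group, which is never $GL_5(\mathbb{F}_2)$, so at least three are needed. If the paper intends four to be minimal, I expect the main obstacle to be ruling out three involutions. I would not try to prove that; rather, I would try to exhibit a Mazurov-type triple, replacing $t_{21}$ by a suitable product absorbed into $B$, and state four only as an upper bound.

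The main technical step I would check by hand is that $P_5$-conjugation really carries $t_{12}$ and $t_{21}$ onto adjacent pairs covering all indices. This holds because the $5$-cycle acts transitively on adjacent pairs.
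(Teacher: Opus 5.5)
Your argument is correct, and its skeleton matches the paper's. Both proofs take a transvection supplied by the involutions and the cyclic permutation matrix $D_0B$ (your $D_0$ and $B$ agree with the paper's after reduction mod $2$), and conclude that these generate $SL_5(\mathbb{F}_2)$. The difference is in how that last step is justified. The paper forms $t_{13}=(i_{12}i_{11})^2$ and then appeals to Lemma~\ref{negativedet}, and implicitly to the cited fact that a transvection and a permutation matrix generate $SL_n$. Over $\mathbb{F}_2$ that lemma is vacuous, because $ESL_n(\mathbb{F}_2)=SL_n(\mathbb{F}_2)$. Likewise, the stated order $10$ and determinant $-1$ of $P_{10}$ hold only over $\mathbb{Z}$. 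You instead observe that the generators are already transvections mod $2$, spread them explicitly by conjugation and commutators, and write down $n$-dimensional versions of $D_0$ and $B$, which the paper leaves unspecified. Your route is therefore more elementary and self-contained, and it makes the claimed generalization to $n>4$ concrete.

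One correction: in the paper, $i_{11}$ and $i_{12}$ reduce mod $2$ to $t_{12}$ and $t_{23}$, not to $t_{12}$ and $t_{21}$, so your Step 1 as written uses a matrix that is not in the paper's set. This costs you nothing. Conjugating $t_{12}$ by powers of the $n$-cycle already gives all $t_{i,i+1}$ with indices mod $n$, including $t_{n1}$. Iterated commutators along the cycle then produce every $t_{ij}$; for example, $t_{21}=[[[t_{23},t_{34}],t_{45}],t_{51}]$. With this one-line change your proof covers the paper's four matrices. It also shows that three of them, $i_{11},D_0,B$, already suffice, which is consistent with the paper's later three-involution generating set for $ESL_5(\mathbb{F}_2)$.
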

\begin{proof}
The permutation matrix $P_5 \in SL_5 [\mathbb{Z}]$ can be generated by 2 involutions. The proof is based on the following
 equalities $P_{10}=D_0^{-1}B$, where
\[
    A=D_0=\begin{pmatrix}
        0 & 0 & 0 & 0 & 1 \\
        0 & 0 & 0 & 1 & 0 \\
        0 & 0 & 1 & 0 & 0 \\
        0 & 1 & 0 & 0 & 0 \\
        1 & 0 & 0 & 0 & 0
    \end{pmatrix},
    \quad
    B= -D_0P_{10}=\begin{pmatrix}
        0 & 0 & 0 & 1 & 0  \\
        0 & 0 & 1 & 0 & 0  \\
        0 & 1 & 0 & 0 & 0  \\
        1 & 0 & 0 & 0 & 0  \\
        0 & 0 & 0 & 0 & -1
    \end{pmatrix}.
\]

The generating of transvection by involutions which can be spread on matrices of $ ESL_n [\mathbb{Z}]$ for any $n \in \mathbb{N}$ is below
\[
        i_{11}=\begin{pmatrix}
        -1 & 1 & 0 & 0 & 0  \\
        0 & 1 & 0 & 0 & 0  \\
        0 & 0 & 1 & 0 & 0  \\
        0 & 0 & 0 & 1 & 0  \\
        0 & 0 & 0 & 0 & 1

    \end{pmatrix},
    \quad
    i_{12}=\begin{pmatrix}
        1 &  0 & 0 & 0 & 0  \\
        0 & -1 & 1 & 0 & 0  \\
        0 & 0 & 1 & 0 & 0  \\
        0 & 0 & 0 & 1 & 0  \\
        0 & 0 & 0 & 0 & 1
    \end{pmatrix}.
\]
then $(i_{12}i_{11})^2=t_{13}$ in $\mathbb{F}_2$. Thus, we obtain $t_{13}$ and monomial matrix $P_{10} = B A^{-1}$ of order 10 and $det(P_{10})=-1$.
Therefore, applying Lemma \ref{negativedet}, we prove that the set of involutions $\langle i_{12}, i_{11}, D_0, B \rangle$ generates $ESL_5 (\mathbb{F}_2)$, as well as for $ESL_n (\mathbb{F}_2)$.
\end{proof}

\section*{The case $n = 5$}
\begin{lemma}
The \textbf{minimal generating set} of $ESL_5[\mathbb{Z}]$ is $\langle {M}_{10}, t_{12}\rangle$
\end{lemma}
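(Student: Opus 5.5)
The plan is to imitate the argument for $ESL_3[\mathbb{Z}]$ with $M_6$, fixing
\[
M_{10}=\begin{pmatrix}
0&1&0&0&0\\
0&0&1&0&0\\
0&0&0&1&0\\
0&0&0&0&1\\
-1&0&0&0&0
\end{pmatrix},
\]
the signed permutation matrix of the $5$-cycle $(1\,2\,3\,4\,5)$ with one entry $-1$ in position $(5,1)$.

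First I record its basic properties. The underlying permutation is an even $5$-cycle, so the single sign makes $\det M_{10}=-1$. Moreover $M_{10}^5=-E$, hence $M_{10}^{10}=E$, which explains the index $10$. Thus $M_{10}\in ESL_5[\mathbb{Z}]\setminus SL_5[\mathbb{Z}]$, while $t_{12}\in SL_5[\mathbb{Z}]$.

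Next I compute the action of $M_{10}$ by conjugation on elementary transvections. Since $M_{10}$ is a monomial matrix, $M_{10}e_{ij}M_{10}^{-1}=\pm e_{\sigma(i)\sigma(j)}$ for the cyclic shift $\sigma$ of indices. The sign is $-1$ exactly when one of the indices is carried across the position $(5,1)$. Iterating conjugation on $t_{12}$ therefore yields, up to inverses, the whole cycle $t_{12},t_{23},t_{34},t_{45},t_{51}$, exactly as the relations $M_6t_{12}M_6^{-1}=t_{31}^{-1}$, $M_6t_{31}M_6^{-1}=t_{23}^{-1}$ did for $n=3$. Inverses are harmless, since $\langle t\rangle=\langle t^{-1}\rangle$.

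The commutator relations $[t_{ij},t_{jk}]=t_{ik}$ for $i\neq k$ then produce every $t_{ij}$, $i\neq j$, starting from the cyclic chain $t_{12},t_{23},t_{34},t_{45},t_{51}$. For example, $t_{13}=[t_{12},t_{23}]$, $t_{14}=[t_{13},t_{34}]$, and so on. Because the chain is cyclic, all ordered pairs $(i,j)$ are reached, including those with $i>j$; for instance $t_{52}=[t_{51},t_{12}]$.

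The elementary transvections generate $SL_n[\mathbb{Z}]$ for $n\ge 3$; this is the classical fact cited via \cite{Humph}. So $\langle M_{10},t_{12}\rangle\supseteq SL_5[\mathbb{Z}]$. Since $[ESL_5[\mathbb{Z}]:SL_5[\mathbb{Z}]]=2$ and $\det M_{10}=-1$, the remaining coset is $M_{10}SL_5[\mathbb{Z}]$, and the whole group is generated. This is the step where Lemma \ref{negativedet} would be invoked, as in the $n=3$ case.

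Minimality is immediate: $ESL_5[\mathbb{Z}]$ is non-abelian (for instance $t_{12}$ and $t_{21}$ do not commute), so it is not cyclic and no single element generates it. Hence two generators is optimal.

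The main obstacle is the sign bookkeeping in the conjugation step: making sure that the orbit of $t_{12}$ under $M_{10}$ really consists of elementary transvections or their inverses, and not of something like $E-e_{ij}$ mixed with sign changes in a way that breaks the chain. I would settle this by checking the general formula $M_{10}t_{ij}M_{10}^{-1}=t_{\sigma(i)\sigma(j)}^{\pm1}$ once. After that, everything else is the standard elementary-matrix argument.
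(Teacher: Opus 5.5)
Your proposal is correct and follows essentially the paper's route: conjugating $t_{12}$ by the signed $5$-cycle permutes the elementary matrices $e_{ij}$ up to sign, the resulting transvections generate $SL_5(\mathbb{Z})$, the determinant $-1$ of $M_{10}$ supplies the other coset (Lemma \ref{negativedet}), and minimality comes from non-cyclicity. Your version also supplies two steps the paper's sketch leaves implicit, namely the commutator step $[t_{ij},t_{jk}]=t_{ik}$ and the sign check $M_{10}t_{ij}M_{10}^{-1}=t_{\sigma(i)\sigma(j)}^{\pm1}$; note that your $M_{10}$ is the transpose, hence the inverse, of the paper's displayed $M_5$, so it generates the same group.
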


The permutation realizing by $M_5$ is similar to $(12345)(67)$ with order 10.

\[
t_{12} = \begin{pmatrix}
1 & 1 & 0 & 0 & 0 \\
0 & 1 & 0 & 0 & 0 \\
0 & 0 & 1 & 0 & 0 \\
0 & 0 & 0 & 1 & 0 \\
0 & 0 & 0 & 0 & 1
\end{pmatrix}, \quad
M_5 = \begin{pmatrix}
0 & 0 & 0 & 0 & -1 \\
1 & 0 & 0 & 0 & 0 \\
0 & 1 & 0 & 0 & 0 \\
0 & 0 & 1 & 0 & 0 \\
0 & 0 & 0 & 1 & 0
\end{pmatrix}.
\]
\begin{proof}
The obvious decomposition $t_{ij}= E +e_{ij}$ allows us to analyse only the conjugation of $e_{ij}$ by the matrix $P$, because of $P E P^{-1} =E$.

    Since left acting of permutation matrix $P$ on $e_{ij}$ only permutes rows of matrix $e_{ij}$ then $Pe_{ij}=e_{kj}$, and $e_{ij}P=e_{il}$.
\end{proof}

\section{Minimal involutive generating set.}

\begin{lemma}\label{negativedet}
  \textit{Let $A_1, A_2, ..., A_k \in ESL(n, \mathbb Z)$ be an alphabet $\mathbb{A}$ of matrices, where at least one matrix $A_i$ has a negative determinant. If $SL(n, \mathbb Z) < G = \left\langle A_1, A_2, ..., A_k \right\rangle$, then $G = ESL(n, \mathbb Z)$.}
\end{lemma}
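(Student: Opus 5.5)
The plan is an index-two coset argument based on the determinant homomorphism. Consider
\[
\det : ESL(n,\mathbb Z)\to \{1,-1\}\simeq \mathbb{Z}_2 ,
\]
which is a group homomorphism with kernel exactly $SL(n,\mathbb Z)$. It is surjective: for instance the elementary diagonal matrix $D_1$ has determinant $-1$. Hence $SL(n,\mathbb Z)$ is a normal subgroup of index $2$ in $ESL(n,\mathbb Z)$. Its cosets are $SL(n,\mathbb Z)$ itself and $A\,SL(n,\mathbb Z)$ for any $A$ with $\det(A)=-1$. This is the same decomposition as the semidirect product $\langle D_1\rangle\ltimes SL_n$ described earlier for $n=3$, and it works verbatim for every $n$.

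First, the inclusion $G\subseteq ESL(n,\mathbb Z)$ is immediate, since every $A_i$ lies in $ESL(n,\mathbb Z)$ and this set is a group. For the reverse inclusion, take $g\in ESL(n,\mathbb Z)$. If $\det g=1$, then $g\in SL(n,\mathbb Z)\subseteq G$ by hypothesis. If $\det g=-1$, let $A_i$ be the generator with $\det A_i=-1$. Then $\det(A_i^{-1}g)=1$, so $A_i^{-1}g\in SL(n,\mathbb Z)\subseteq G$, and therefore $g=A_i\,(A_i^{-1}g)\in G$. This gives $G=ESL(n,\mathbb Z)$.

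Equivalently, $SL(n,\mathbb Z)\le G\le ESL(n,\mathbb Z)$ with index $[ESL:SL]=2$ prime, so $G$ is one of the two extremes. The case $G=SL(n,\mathbb Z)$ is excluded because $A_i\in G$ has determinant $-1$.

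There is no real obstacle here. The only point to state carefully is that "negative determinant" means $\det A_i=-1$, since determinants in $ESL$ are $\pm1$. The argument uses nothing about $\mathbb Z$ beyond $\det$ being a homomorphism, so it applies unchanged to $ESL(n,\mathbb F_p)$ for $p>2$, which is the form in which the lemma is invoked in the earlier propositions. For $p=2$ one has $-1=1$, so $ESL(n,\mathbb F_2)=SL(n,\mathbb F_2)$ and the statement holds trivially.
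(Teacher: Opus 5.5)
Your proof is correct and is essentially the paper's argument. The paper writes an arbitrary $B$ with $\det B=-1$ as $B=A_i^{-1}(A_iB)$ with $A_iB\in SL(n,\mathbb Z)\subseteq G$, which is the same coset decomposition you use with $A_i$ and $A_i^{-1}$ interchanged; your index-two reformulation says the same thing more structurally.
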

\begin{proof}
Take an arbitrary matrix $B \in ESL(n, \mathbb Z) \setminus SL^{}(n, \mathbb Z)$, and write it in the form $B = A_i^{-1} A_i B$. Therefore, the matrix $A_i B$ belongs to $SL(n, \mathbb Z)$ and consequently, the word over $\mathbb{A}$ expressing the matrix $B$ exists in $G$. Due to the arbitrariness of the matrix $B\in ESL(n, \mathbb Z)$, the entire $ESL(n, \mathbb Z)$ is generated in this way.
\end{proof}

Let ${{t}_{12}}=\left( \begin{matrix}
   1 & 1  \\
   0 & 1  \\
\end{matrix} \right)$, ${{\rho}}=\left( \begin{matrix}
   0 & 1  \\
   1 & 0  \\
\end{matrix} \right)$. 
\begin{proposition}
The minimal generating set of $ES{{L}_{2}}\left( \mathbb{Z} \right)$ is $S=\langle {{t}_{12}},\,\,{{\rho}_{}} \rangle$.
\end{proposition}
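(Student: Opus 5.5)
The proof has two parts: show that $S$ generates $ESL_2(\mathbb{Z})$, then show that no smaller set does.

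\emph{Generation.} Put $G=\langle t_{12},\rho\rangle$. Conjugating by the involution $\rho$ transposes the transvection:
\[
\rho\, t_{12}\,\rho=\left( \begin{matrix} 1 & 0 \\ 1 & 1 \end{matrix} \right)=t_{21}.
\]
Hence $t_{12},t_{21}\in G$. It is classical that $SL_2(\mathbb{Z})=\langle t_{12},t_{21}\rangle$, for example via
\[
S_0=t_{12}\,t_{21}^{-1}\,t_{12}=\left( \begin{matrix} 0 & 1 \\ -1 & 0 \end{matrix} \right)
\]
together with $t_{12}$ and the standard presentation of $SL_2(\mathbb{Z})$. To keep the argument self-contained I would instead run the Euclidean algorithm. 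Left multiplication by $t_{12}^{\pm k}$ and $t_{21}^{\pm k}$ performs elementary row operations. Applied to the first column of $M\in SL_2(\mathbb{Z})$, these reduce it to $(\pm1,0)^T$. The resulting matrix is then upper triangular with diagonal $\pm1$, hence equal to $\pm t_{12}^{m}$. Finally $-E=S_0^2$ lies in $G$. This gives $SL_2(\mathbb{Z})\le G$. Since $\det\rho=-1$, Lemma \ref{negativedet} (applied with $n=2$) yields $G=ESL_2(\mathbb{Z})$.

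\emph{Minimality.} Any generating set with fewer than two elements generates a cyclic group, so it suffices to show $ESL_2(\mathbb{Z})$ is not cyclic. It is non-abelian: $t_{12}$ and $\rho$ do not commute, since $\rho t_{12}\rho=t_{21}\neq t_{12}$. Therefore $|S|=2$ is the minimal size, in the same way as for Proposition 1.

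\emph{Main obstacle.} The only nontrivial step is $SL_2(\mathbb{Z})=\langle t_{12},t_{21}\rangle$. I would handle it by the Euclidean reduction: induct on $|a|+|c|$ for the first column $(a,c)^T$, where each step subtracts a multiple of the row with the smaller nonzero entry from the other row. The care needed is only at the end of the reduction, when tracking the sign $\pm1$ on the diagonal, and this is resolved by $-E=S_0^2\in G$.
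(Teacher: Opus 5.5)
Your proposal is correct and follows the paper's proof: conjugate $t_{12}$ by $\rho$ to obtain $t_{21}$, use $SL_2(\mathbb{Z})=\langle t_{12},t_{21}\rangle$, and pass to $ESL_2(\mathbb{Z})$ via Lemma~\ref{negativedet}, since $\det\rho=-1$. The only differences are that you prove the classical generation fact by Euclidean reduction (using $-E=S_0^2$), where the paper just cites it as well known, and you state the minimality argument (a non-cyclic group needs two generators), which the paper leaves implicit for this proposition.
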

\begin{proof}
Applying conjugation by $\rho$ we express
$ \rho t_{12} \rho =  \left(
\begin{matrix}
   1 & 0  \\
   1 & 1  \\
\end{matrix}\right) = t_{21}$, which is the second transvection from ${SL_2(\mathbb{Z})}$.

As is well known the transvections $t_{12}$ and $t_{21}$ generate group $S{{L}_{2}}\left( \mathbb{Z} \right)$ therefore an arbitrary element $C \in SL_2(\mathbb{Z})$ can be expressed.
Having matrix  $\rho$ with $det(\rho)=-1$ we apply Lemma \ref{negativedet} to prove that $S$ is the generic set of $ESL_{2}\left( \mathbb{Z} \right)$.
\end{proof}

An impotent property of involutions is formulated below.

\begin{lemma}
    If involutions $A$, $B$ commute, then $AB$ is an involution too. Otherwise, this is also true.
\end{lemma}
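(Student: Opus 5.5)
The plan is a direct computation in an arbitrary group, using only $A^2=B^2=E$, i.e. $A^{-1}=A$ and $B^{-1}=B$. I read the statement as two implications for involutions $A,B$: (i) if $AB=BA$ then $(AB)^2=E$; (ii) conversely, if $(AB)^2=E$ then $AB=BA$. I understand ``involution'' for $AB$ in the sense $(AB)^2=E$, so that the degenerate case $A=B$ (where $AB=E$) is allowed. If a strict involution is required, I will add the hypothesis $A\neq B$, since $AB=E$ forces $B=A^{-1}=A$.

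For (i), use commutativity to rearrange: $(AB)^2=ABAB=A(BA)B=A(AB)B=A^2B^2=E$. Since $AB\neq E$ exactly when $A\neq B$, the product $AB$ then has order exactly $2$.

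For (ii), start from $ABAB=E$. Multiply on the left by $A$ and on the right by $B$, and use $A^{-1}=A$, $B^{-1}=B$, to get $BA=A^{-1}B^{-1}=AB$. Equivalently, $(AB)^{-1}=B^{-1}A^{-1}=BA$, and $(AB)^2=E$ means exactly that $(AB)^{-1}=AB$. So $A$ and $B$ commute.

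There is no serious obstacle. The only point needing care is the interpretation of ``Otherwise, this is also true'': I will state the converse explicitly and note the trivial exception $A=B$. As an illustration within the paper's setting, I would note that the diagonal matrices $D_1,D_2$ commute and their product $D_1D_2$ is again a diagonal involution. By contrast, $i_{12}$ and $i_{23}$ from Example \ref{inv} do not commute, and accordingly $(i_{12}i_{23})^2=t_{31}\neq E$.
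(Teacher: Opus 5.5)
Your proof is correct and follows the paper's own argument: both directions are the same direct computation using $A^{-1}=A$, $B^{-1}=B$, and the converse amounts to the paper's observation that $(AB)^2=ABA^{-1}B^{-1}=[A,B]$. Your remark about the degenerate case $A=B$ is a small precision the paper omits: there $AB=E$, so ``involution'' must be read as $(AB)^2=E$, or else $A\neq B$ must be assumed.
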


\begin{proof}
Let $i_1i_2=i_2i_1$ then $i_1i_2i^{-1}_1=i_2$, $i_1i_2i^{-1}_1i^{-1}_2=e$ and so $i_1i^{-1}_1i_2i_2=i_1i_1i_2i_2=(i_1i_2)^2=e.$

Vice versa if $(i_1i_2)^2=e$ then $(i_1i_2) (i_1i_2)=i_1i_2 i^{-1}_1i^{-1}_2=[i_1,i_2 ]=e$ so $[i_1,i_2 ]=e$. This completes the proof.
\end{proof}

\begin{example}
For instance, let
${{i}_{12}}=\left( \begin{matrix}
   -1 & 1 & 0  \\
   0 & 1 & 0  \\
   0 & 0 & 1  \\
\end{matrix} \right)$
and
${{i'}_{23}}=\left( \begin{matrix}
   -1 & 1 & 0  \\
   0 & 1 & 1  \\
   0 & 0 &-1  \\
\end{matrix} \right)$,
 then ${{i}_{12}}{i'}_{23}= {{i'}_{23}}{i}_{12} = \left( \begin{matrix}
   1 & 0 & 1  \\
   0 & 1 & 1  \\
   0 & 0 &-1  \\
\end{matrix} \right) = I$ that is involution $I^2=E$.
\end{example}

For convenience, we fix some notation for \textit{diagonal involutive matrices} from $ESL_3(\mathbb{Z})$:
\begin{center}
$I_{12} = \begin{pmatrix}
            -1 & 1 & 0 & 0 & 0 \\
            0  & 1 & 0 & 0 & 0 \\
            0  & 0 & 1 & 0 & 0 \\
            0  & 0 & 0 & 1 & 0 \\
            0  & 0 & 0 & 0 & 1
        \end{pmatrix},
        I_{23} = \begin{pmatrix}
            1 & 0 & 0 & 0 & 0 \\
            0 & -1& 1 & 0 & 0 \\
            0 & 0 & 1 & 0 & 0 \\
            0 & 0 & 0 & 1 & 0 \\
            0 & 0 & 0 & 0 & 1
        \end{pmatrix}, \ldots,
        I_{45} = \begin{pmatrix}
            1 & 0 & 0 & 0 & 0 \\
            0 & 1 & 0 & 0 & 0 \\
            0 & 0 & 1 & 0 & 0 \\
            0 & 0 & 0 & -1& 1 \\
            0 & 0 & 0 & 0 & 1
        \end{pmatrix}.
    $
    \end{center}

\begin{lemma}\label{transvectionoverF_p}
 \textit{For a transvection $T_{ij}(a)$ over the finite ring $\mathbb{Z}_m$, there exists a $k \in \mathbb{N}$ such that $T_{ij}(a)^k$ is an elementary transvection if and only if $\gcd(a, m) = 1$.}
\end{lemma}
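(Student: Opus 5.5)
The key identity is $T_{ij}(a)^k=T_{ij}(ka)$ for $i\neq j$. It holds because $e_{ij}^2=0$ when $i\neq j$, so $(E+ae_{ij})^k=E+kae_{ij}$ by the binomial expansion, in which all terms with $e_{ij}^2$ vanish. Proving this first reduces the statement to a question in the additive group $\mathbb{Z}_m$: does there exist $k\in\mathbb{N}$ with $ka\equiv 1 \pmod m$? I read ``elementary transvection'' as $t_{ij}=E+e_{ij}$, as defined earlier in the paper.

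\emph{Sufficiency.} Assume $\gcd(a,m)=1$. By B\'ezout there are integers $u,v$ with $ua+vm=1$, so $ua\equiv 1\pmod m$. Since $\mathbb{Z}_m$ is finite, we may replace $u$ by its least positive representative modulo $m$. This gives $k=u\in\mathbb{N}$ with $T_{ij}(a)^k=T_{ij}(ka)=T_{ij}(1)=t_{ij}$.

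\emph{Necessity.} Suppose $T_{ij}(a)^k=t_{ij}$ for some $k$. Comparing the $(i,j)$ entries gives $ka\equiv 1\pmod m$. Let $d=\gcd(a,m)$. Then $d$ divides $ka$, and it divides $m$, so it divides $ka-1$, which is a multiple of $m$. Hence $d$ divides $1$, so $d=1$.

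There is no real obstacle here; the only point needing care is justifying the power formula. It uses that the two summands $E$ and $ae_{ij}$ commute and that $e_{ij}$ is nilpotent, which holds over any commutative ring, in particular $\mathbb{Z}_m$ and $\mathbb{F}_p$. For $m=p$ prime, the condition reduces to $a\not\equiv 0$, which is how the lemma will presumably be used for $ESL_n(\mathbb{F}_p)$: every non-trivial $T_{ij}(a)$ has $t_{ij}$ as a power.
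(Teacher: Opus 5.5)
Your proof is correct and follows the same route as the paper. Both use the identity $T_{ij}(a)^k=T_{ij}(ka)$ and take $k$ to be a positive representative of $a^{-1}$ in $\mathbb{Z}_m$. The paper's proof, however, argues only the ``if'' direction and uses the power formula without justification, so your nilpotency argument for $(E+ae_{ij})^k=E+kae_{ij}$ and your necessity argument via $ka\equiv 1 \pmod m$ supply exactly what it leaves out.
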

\begin{proof}
 If $\gcd(a, m) = 1$ then $a \in \mathbb Z^{\times}$ (integral domain of $\mathbb{Z}_m$), i.e., $a$ is a unit in the ring $\mathbb Z_m$. Then there exists $a^{-1} \in \mathbb Z_m$ such that $a a^{-1} \equiv 1 \mod{m}$. Choosing $k \equiv a^{-1} \mod{m}$ we obtain $T_{ij}(a)^k =T_{ij}(ka) = T_{ij}(1)$.     $\blacksquare$
\end{proof}

\begin{corollary}
\textit{For a transvection $T_{ij}(a)$ $a \in \mathbb{F}^{*}_p$ over the finite field $\mathbb{F}_p$, there exists $k \in \mathbb{N}$ such that $T_{ij}(a)^k$ is an elementary transvection.}
\end{corollary}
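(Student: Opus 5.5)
The corollary is the special case $m=p$ of Lemma \ref{transvectionoverF_p}, so I will verify its hypothesis and then invoke it. Since $\mathbb{F}_p=\mathbb{Z}_p$ with $p$ prime, every $a\in\mathbb{F}^{*}_p$ is a residue class $a\not\equiv 0 \pmod p$. Hence $\gcd(a,p)=1$, because the only positive divisors of $p$ are $1$ and $p$, and $p\nmid a$.

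Lemma \ref{transvectionoverF_p} with $m=p$ then gives $k\in\mathbb{N}$ such that $T_{ij}(a)^k$ is an elementary transvection. To keep the argument self-contained I will also make the exponent explicit. Let $k$ be the least positive integer representing $a^{-1}\in\mathbb{F}_p^{*}$; this inverse exists since $\mathbb{F}_p$ is a field. Alternatively, Fermat's theorem gives $k=a^{p-2}$ reduced mod $p$.

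It remains to check the identity $T_{ij}(a)^k=T_{ij}(ka)$. For $i\neq j$ we have $e_{ij}^2=0$, so $(E+ae_{ij})(E+be_{ij})=E+(a+b)e_{ij}$. Induction on $k$ then gives $T_{ij}(a)^k=E+ka\,e_{ij}$. With $ka\equiv 1\pmod p$ this yields $T_{ij}(a)^k=E+e_{ij}=t_{ij}$, the elementary transvection in the sense fixed earlier in the paper.

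There is no real obstacle. The only point needing care is to make $k$ a positive natural number rather than a residue class, which choosing the least positive representative (or $a^{p-2}$) handles. As a byproduct, $k\le p-1$.
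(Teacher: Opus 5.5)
Your proof is correct and takes the paper's route: the corollary is the case $m=p$ of the preceding lemma on transvections over $\mathbb{Z}_m$, and that lemma's proof also takes $k$ to be a positive representative of $a^{-1}$ and uses $T_{ij}(a)^k=T_{ij}(ka)$. Your observation that $e_{ij}^2=0$ for $i\neq j$ justifies that identity, which the paper uses without comment.
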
\label{transvectionoverF_p}

 \begin{theorem}\label{minimal3gen}
A minimal generating set for $ESL_n(\mathbb{F}_2)$, $ESL_n(\mathbb{F}_p)$ with $n > 2$ and $p \in \mathbb{P}$ contains at least 3 generators.

The exceptional case $ESL_2(\mathbb{F}_2)$ is 2-generated group, moreover \\ $ESL_2(\mathbb{F}_2) \simeq D_3 \simeq \langle \rho, t_{12} \rangle$.
\end{theorem}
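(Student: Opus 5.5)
The plan is to read the statement in the sense of the section title, that is, for \emph{involutive} generating sets, and to prove the lower bound by a solvability argument. In the unrestricted sense it would contradict the paper's own two-generated sets $\langle M_6, t_{12}\rangle$ and $\langle M_5, t_{13}\rangle$, since for instance $\langle M_5, t_{13}\rangle \simeq ESL_3(\mathbb{F}_3)$ by the Example above.

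\textbf{Step 1: reduce to dihedral groups.} Any group generated by at most two involutions is cyclic or dihedral, hence solvable. A single involution generates $C_2$. For two involutions $a,b$, the element $r=ab$ satisfies $ara^{-1}=ba=r^{-1}$. So $\langle r\rangle$ is a normal cyclic subgroup of index at most $2$ in $\langle a,b\rangle$, and $\langle a,b\rangle$ is a quotient of the infinite dihedral group. In particular every subgroup and every section of $\langle a,b\rangle$ is solvable.

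\textbf{Step 2: show $ESL_n(\mathbb{F}_p)$ is not solvable for $n>2$.} It contains $SL_n(\mathbb{F}_p)$. For $n\ge 3$ this group is generated by transvections, and by the commutator relations $[t_{ij},t_{jk}]=t_{ik}$ recorded in the Proposition on $ESL_3[\mathbb{Z}]$, every transvection is a commutator. Hence $SL_n(\mathbb{F}_p)$ is perfect and nontrivial, so it is not solvable. Alternatively one can invoke the simplicity of $PSL_n(\mathbb{F}_p)$ for $n\ge 3$, as used in Theorem \ref{n=2k+1}.

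\textbf{Step 3: conclude.} By Step 1, any subgroup generated by at most two involutions is solvable, while by Step 2 $ESL_n(\mathbb{F}_p)$ is not. Therefore no set of one or two involutions generates $ESL_n(\mathbb{F}_p)$, and a minimal involutive generating set has at least $3$ elements. This covers $p=2$ as well, where $ESL_n(\mathbb{F}_2)=SL_n(\mathbb{F}_2)=GL_n(\mathbb{F}_2)$. That three involutions actually suffice is not needed for this lower bound; constructions such as the $I_{ij}$ and the Example producing $t_{31}=(i_{12}i_{23})^2$ indicate it.

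\textbf{Step 4: the exceptional case $n=2$, $p=2$.} Here $ESL_2(\mathbb{F}_2)=GL_2(\mathbb{F}_2)$ has order $6$ and is nonabelian, since $\rho t_{12}\neq t_{12}\rho$. It is therefore isomorphic to $S_3\simeq D_3$. Over $\mathbb{F}_2$ both $\rho$ and $t_{12}$ are involutions, because $t_{12}^2=E+2e_{12}=E$. Their product $\rho t_{12}$ has order $3$ by direct computation. So $\langle \rho,t_{12}\rangle$ has order divisible by $6$ and equals the whole group. This realizes the dihedral picture of Step 1, and fewer than two generators is impossible since the group is non-cyclic.

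\textbf{Main obstacle.} The only genuinely delicate point is Step 2 in small characteristic, namely verifying perfectness of $SL_n(\mathbb{F}_2)$ and $SL_n(\mathbb{F}_3)$ for $n\ge 3$. The commutator identity $[t_{ij},t_{jk}]=t_{ik}$ needs three distinct indices, which is exactly the hypothesis $n>2$, and it works over any field. So I expect this step to go through uniformly. It is also precisely where $n=2$ fails, since $SL_2(\mathbb{F}_2)$ and $SL_2(\mathbb{F}_3)$ are solvable.
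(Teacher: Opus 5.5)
Your proof is correct and follows the paper's own route: a group generated by two involutions is dihedral, hence solvable, whereas $ESL_n(\mathbb{F}_p)$ with $n>2$ is not. You are also right that this only proves the involutive reading, which is all the paper's proof addresses; the unrestricted claim is false, since e.g.\ $ESL_3(\mathbb{F}_p)$ is the mod-$p$ image of the two-generated $ESL_3(\mathbb{Z})=\langle M_6,t_{12}\rangle$.

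Your perfectness argument via $[t_{ij},t_{jk}]=t_{ik}$ makes rigorous the non-solvability that the paper only asserts. The paper's element-order comparisons for $ESL_2(\mathbb{F}_3)$ are not needed once $n>2$.
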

\begin{proof}
 Due to the well known isomorphism $SL_2(\mathbb{F}_2) \simeq S_3 \simeq D_3 $ and the fact that $1=-1$ in $\mathbb{F}_2$ we have $SL_2(\mathbb{F}_2)= ESL_2(\mathbb{F}_2)\simeq D_3$. As a direct consequence, two involutions generate $ESL_2(\mathbb{F}_2)$. For instance, $ESL_2(\mathbb{F}_2) \simeq \langle \rho, t_{12} \rangle$, note that $t_{12}$ is the involution in $ESL_2(\mathbb{F}_2)$.

To show that $SL_2(\mathbb{F}_p)$ is not a group generated by two involutions, like the dihedral group we show an absence of isomorphism $ESL_2(\mathbb{F}_p)$ with $D_{2p}$.

In order to show that $ESL_2(\mathbb{F}_p)$ is not two involutions generated group as dihedral group, we show an absence of isomorphism $ESL_2(\mathbb{F}_p)$ with $D_{2p}$.

The order of $ESL_2(\mathbb{F}_3)$ is 48.
 $ESL_2(\mathbb{F}_3)$ does not contain an element of order 24, hence it cannot be isomorphic to $D_{48}$ having a cyclic group of order 24. Similarly $SL_2(\mathbb{F}_3)$ has no elements with order 12 so $SL_2(\mathbb{F}_3)$ is not isomorphic to $D_{12}$. Arguing in similar way we justify that $ESL_2(\mathbb{F}_p)$ has not two generating set.

 If $n>3$ then $D_{2p}$ is solvable in contrast with $ESL_2(\mathbb{F}_p)$. That completes the proof.
  \end{proof}

\textbf{Proposition 2.}\label{I_{12},D_{1}, F_L} {\sl The minimal involutive generating set for both $ESL_5\left[ \mathbb{Z}\right]$ and $ESL_5\left(  \mathbb{F}_p \right)$ is composed of 3 involutions. For instance:}

\begin{center}
    $
        I_{12} = \begin{pmatrix}
            -1 & 1 & 0 & 0 & 0  \\
            0  & 1 & 0 & 0 & 0  \\
            0  & 0 & 1 & 0 & 0  \\
            0  & 0 & 0 & 1 & 0  \\
            0  & 0 & 0 & 0 & 1
        \end{pmatrix},
        D_1 = \begin{pmatrix}
            0  & 0  & 0  & 0  & -1 \\
            0  & 0  & 0  & 1 & 0  \\
            0  & 0  & -1 & 0  & 0  \\
            0  & 1 & 0  & 0  & 0  \\
            -1 & 0  & 0  & 0  & 0
        \end{pmatrix},
        F_L = \begin{pmatrix}
            1 & 0 & 0 & 0 & 0 \\
            0 & 0 & 0 & 0 & 1 \\
            0 & 0 & 0 & 1 & 0 \\
            0 & 0 & 1 & 0 & 0 \\
            0 & 1 & 0 & 0 & 0
        \end{pmatrix}.
    $
\end{center}
\begin{proof}
 The key step in the proof is to generate all elementary transvections using the given involutions.
Over this alphabet $I_{12}, D_1, F_L$, there exist words for generating both a transvection and a permutation matrix $P$ of order 5:
\begin{center}
     \begin{align*}
    &   t_{41} = I_{12} D_1 F_L I_{12} D_1 F_L I_{12} D_1 I_{12} D_1 F_L I_{12} D_1 I_{12} D_1 F_L I_{12} D_1 I_{12} D_1 F_L\\
    & I_{12} D_1 F_L D_1 I_{12}, \\
    &    P = \begin{pmatrix}
            0 & 0 & 0 & 1 & 0 \\
            0 & 0 & 1 & 0 & 0 \\
            1 & 0 & 0 & 0 & 0 \\
            0 & 0 & 0 & 0 & 1 \\
            0 & 1 & 0 & 0 & 0
        \end{pmatrix} = I_{12} F_L D_1 F_L I_{12} D_1 I_{12} D_1 F_L D_1 F_L I_{12} D_1 F_L \times \\ & \times D_1 I_{12} F_L
     D_1 F_L I_{12} F_L D_1.
    \end{align*}
\end{center}
Note that the expression for $t_{41}$ consists of 26 letters.
According to \cite{VSEM}, one transvection and a permutation matrix are sufficient to generate $SL(5, \mathbb{Z})$, by virtue of the action by a permutation matrix $P^{-1}t_{41}P=t_{ij}$ we obtain all transvections from $SL(5, \mathbb{Z})$ having a given one.

 Moreover, according to Lemma \ref{negativedet}, after generating $SL(5, \mathbb{Z})$ one additional matrix $I_{12}$ ($\det(I_{12}) = -1$) is sufficient to extend it to $ESL(5, \mathbb{Z})$.


 Furthermore, the important observation $F_L I_{12} F_L = I_{15}$ allows us to generate all involutions,
using the given involution and a conjugation them by $F_L$.

Applying the reduction homomorphism $ESL(5, \mathbb{Z}) \xrightarrow{modp} ESL(5, \mathbb{F}_p)$ gives us the finite group $ESL(5, \mathbb{F}_p)$ as a homomorphic image. Consequently, three images of generators in $ESL(5, \mathbb{F}_p)$ are the same, as initial generators because of reduction of $1$ and $-1$ by $\mod 5$ are the same elements in $\mathbb{F}_5$.

Taking into account Theorem \ref{minimal3gen} these groups do not admit set of two generators.
\end{proof}


\begin{theorem}\label{ID0FU} {\sl The minimal involutive generating set of $ESL\left( 5, \mathbb{Z} \right)$ as well as for $ESL\left( 5, \mathbb{F}_p \right)$ consists of 3 involutions $D_0, F_U, I_{12}$ with the relations $(D_0 F_U)^{4} I_{12} (F_U D_0)^{4}I_{12}^{-1}=E$, $(I_{12} D_0)^4 =E$, $(I_{12} F_U)^4 =E$, $(D_0 F_U )^5=E$, $D_0^2 = F_U^2 = I^2_{12}=E$, where }
    \begin{center}
    $
        I_{12} = \begin{pmatrix}
            -1 & 1 & 0 & 0 & 0  \\
            0  & 1 & 0 & 0 & 0  \\
            0  & 0 & 1 & 0 & 0  \\
            0  & 0 & 0 & 1 & 0  \\
            0  & 0 & 0 & 0 & 1
        \end{pmatrix},
                D_0 = \begin{pmatrix}
            0  & 0  & 0  & 0  & -1 \\
            0  & 0  & 0  & -1 & 0  \\
            0  & 0  & -1 & 0  & 0  \\
            0  & -1 & 0  & 0  & 0  \\
            -1 & 0  & 0  & 0  & 0
        \end{pmatrix},
        F_U = \begin{pmatrix}
            0 & 0 & 0 & 1 & 0 \\
            0 & 0 & 1 & 0 & 0 \\
            0 & 1 & 0 & 0 & 0 \\
            1 & 0 & 0 & 0 & 0 \\
            0 & 0 & 0 & 0 & 1
        \end{pmatrix}.
              $
             \end{center}

\end{theorem}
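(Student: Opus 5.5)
The plan follows the scheme of Proposition 2: build a long cycle and one elementary transvection from $D_0,F_U,I_{12}$, obtain $SL(5,\mathbb{Z})$, then extend to $ESL(5,\mathbb{Z})$ with Lemma \ref{negativedet}.

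\emph{The cycle.} First compute $D_0F_U$ on the standard basis. Since $F_U$ swaps $e_1\leftrightarrow e_4$, swaps $e_2\leftrightarrow e_3$ and fixes $e_5$, and $D_0e_i=-e_{6-i}$, we get
\[
e_1\mapsto -e_2,\quad e_2\mapsto -e_3,\quad e_3\mapsto -e_4,\quad e_4\mapsto -e_5,\quad e_5\mapsto -e_1 .
\]
Hence $D_0F_U=-C$, where $C$ is the permutation matrix of the $5$-cycle $(12345)$. Then $(D_0F_U)^5=(-1)^5C^5=-E$, so $-E$ lies in the group. I will need to check the sign in the stated relation $(D_0F_U)^5=E$ by direct multiplication: my computation gives $(D_0F_U)^{10}=E$, i.e. $(D_0F_U)^5=-E$, and I would state the relation in that corrected form. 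In any case $(D_0F_U)^6=(-C)^6=C$, so the $5$-cycle $C$ belongs to $G=\langle D_0,F_U,I_{12}\rangle$.

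\emph{A transvection.} Conjugating $I_{12}$ by powers of $C$ gives the involutions $I_{23},I_{34},I_{45}$ and $I_{51}$, with indices taken cyclically. A product of two adjacent ones, for example $I_{12}I_{23}$ (possibly after a sign change using $-E\in G$), is unipotent modulo a diagonal sign pattern. Some small power of it, as in Example \ref{inv}, should be the elementary transvection $t_{13}$. The exact power and sign will be pinned down by a $3\times 3$ block computation.

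\emph{Generating $SL(5,\mathbb{Z})$ and $ESL(5,\mathbb{Z})$.} Conjugating $t_{13}$ by powers of $C$ yields every $t_{i,i+2}$ with indices mod $5$. Commutator identities $[t_{ij},t_{jk}]=t_{ik}$ then give all $t_{ij}$, because the step $2$ generates $\mathbb{Z}_5$. By \cite{VSEM} the elementary transvections generate $SL(5,\mathbb{Z})$. Since $\det D_0=-1$ (the reversal permutation is even and $(-1)^5=-1$), Lemma \ref{negativedet} gives $G=ESL(5,\mathbb{Z})$.

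\emph{Passage to $\mathbb{F}_p$, minimality, and the other relations.} Reduction mod $p$ is surjective onto $SL(5,\mathbb{F}_p)$, since $SL(5,\mathbb{F}_p)$ is also generated by elementary transvections. The determinant $-1$ survives reduction for $p>2$; for $p=2$ we have $ESL=SL$. This gives the statement for $ESL(5,\mathbb{F}_p)$. Minimality follows from Theorem \ref{minimal3gen}, which excludes two generators. The remaining relations, $(I_{12}D_0)^4=(I_{12}F_U)^4=E$ and the conjugation relation involving $(D_0F_U)^4$, together with the involution relations, are direct matrix checks. The conjugation relation amounts to $C^{\pm1}$ conjugating $I_{12}$ back to $I_{12}$ up to the identifications above, and also needs a sign check.

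The main obstacle is finding the explicit short word in the $I_{k,k+1}$ that equals an elementary transvection. The fallback is the route of Proposition 2: produce any transvection $T_{ij}(a)$ with $a=\pm1$, then invert or square as needed, using Lemma \ref{transvectionoverF_p} over $\mathbb{F}_p$.
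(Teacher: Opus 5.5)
Your overall scheme (a $5$-cycle plus one elementary transvection, then $SL(5,\mathbb{Z})$, then Lemma~\ref{negativedet} and reduction mod $p$) is the same as the paper's. But the step you flag as the main obstacle fails as you propose it. In the block on coordinates $1,2,3$ one has $I_{12}I_{23}=\begin{pmatrix}-1&-1&1\\0&-1&1\\0&0&1\end{pmatrix}$ and $(I_{12}I_{23})^2=E+2e_{12}-e_{13}$. So the even powers are $E+k(2e_{12}-e_{13})$. The odd powers have diagonal $(-1,-1,1,1,1)$ and stay non-unipotent after multiplying by $-E$. Hence no power of $\pm I_{12}I_{23}$ is an elementary transvection, over $\mathbb{Z}$ or over $\mathbb{F}_p$ with $p>2$. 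It works only modulo $2$; Example~\ref{inv} succeeds only because its involutions have a different sign pattern. No other pair of the $I_{ij}$ helps either. Up to the dihedral symmetry and inversion, the remaining products are $I_{12}I_{15}=E+e_{12}-e_{15}$, or $I_{12}I_{21}$ (of order $3$), or a product of two commuting involutions. Your fallback does not apply, because Lemma~\ref{transvectionoverF_p} concerns $T_{ij}(a)$, not non-elementary transvections such as $E+2e_{12}-e_{13}$. The paper bridges exactly this point with an explicit $26$-letter word for $t_{25}$, and your proposal has no substitute for it.

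The missing idea can be supplied inside your framework.
\begin{itemize}
\item The matrices $E+e_1v^{T}$ with $v_1=0$ form an abelian group, with $(E+e_1v^{T})(E+e_1w^{T})=E+e_1(v+w)^{T}$.
\item Let $P=C^2F_U$, the permutation $i\mapsto 2-i$ modulo $5$. Then $I_{15}=PI_{12}P^{-1}\in G$.
\item So $G$ contains $I_{12}I_{15}=E+e_{12}-e_{15}$, $(I_{12}I_{23})^2=E+2e_{12}-e_{13}$, and its $P$-conjugate $E+2e_{15}-e_{14}$.
\item With $x=E+e_{12}-e_{15}$ and $y=C(E+2e_{12}-e_{13})C^{-1}=E+2e_{23}-e_{24}$, the commutator is $xyx^{-1}y^{-1}=E+2e_{13}-e_{14}$, also in $G$.
\end{itemize}
Since
\[
e_3=(2e_3-e_4)+(2e_2-e_3)-(2e_5-e_4)-2(e_2-e_5),
\]
this gives $t_{13}\in G$, and your remaining steps then go through.

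Separately, you rightly noticed that $(D_0F_U)^5=-E$. But the conjugation relation is not a matter of sign. We have $(D_0F_U)^4=C^{-1}$ and $C^{-1}I_{12}C=E-2e_{55}+e_{51}\neq I_{12}$. So $(D_0F_U)^4I_{12}(F_UD_0)^4I_{12}^{-1}=E$ is false as stated (it holds with exponent $5$), and it cannot be one of your ``direct matrix checks''. The relations $(I_{12}D_0)^4=(I_{12}F_U)^4=E$ do hold.
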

\begin{proof}   To justify the above relations, we note that these involutions possess the relation of diagonal shift of the involutive cell $D_0 F_U I_{12} F_U D_0=I_{23}$, which implies the relation $(D_0 F_U)^{4} I_{12} (F_U D_0)^{4}=I_{12}$, $I_{12} D_0 I_{12} D_0 I_{12} D_0 I_{12} D_0 =E$, $(I_{12} F_U)^4 =E$, $(D_0 F_U )^5=E$.
The key step in the proof is to generate a transvection using the given involutions, in order to do this we investigate the relation in this generic set. Then $t_{25}$ is expressed by the word of 26 elements:
$
t_{25} = D_0 I_{12} F_U I_{12} D_0 I_{12} F_U I_{12} D_0 I_{12} \\ D_0 F_U I_{12}  D_0 I_{12}  D_0 F_U I_{12} D_0 I_{12} D_0 F_U D_0 F_U I_{12} D_0.
$

Constructing this set consisting of $n$ transvections according to Theorem 2.1 from \cite{Humph} means that we have constructed a generating set. Furthermore, as was studied in \cite{VSEM} a minimal generating set from transvections from $SL(n,\mathbb{Z})$ is of size $n$. This set of generators allows us to express the permutation matrix $P_5$ in the form
\begin{equation*}
    P_5 = F_U \cdot D_1 \cdot F_U \cdot D_1 =
    \begin{pmatrix}
        0 & 0 & 1 & 0 & 0 \\
        0 & 0 & 0 & 1 & 0 \\
        0 & 0 & 0 & 0 & 1 \\
        1 & 0 & 0 & 0 & 0 \\
        0 & 1 & 0 & 0 & 0
    \end{pmatrix}. \end{equation*}

The minimality of this set is based on Corollary \ref{minimal3}.
\end{proof}

\begin{corollary}
If we replace $D_0$ with $-D_0$, that is,
we introduce another matrix $-D_0$ with a negative determinant,
then the resulting set $\{I_{12}, -D_0, F_U\}$ also generates the entire $ESL(5, \mathbb Z)$.
\end{corollary}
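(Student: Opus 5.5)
The corollary asks whether $\{I_{12}, -D_0, F_U\}$ still generates $ESL(5,\mathbb Z)$. The plan is to reduce it to Theorem \ref{ID0FU} together with Lemma \ref{negativedet}. Write $\{I_{12}, -D_0, F_U\}$ and let $G'$ be the subgroup it generates.

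First I compare determinants. $D_0$ is the anti-diagonal matrix with all entries $-1$, so $\det(D_0)=\det(-J)=(-1)^5\det J=-\det J$, where $J$ is the anti-identity. Since $J$ corresponds to the permutation $(15)(24)$, we have $\det J=1$. Hence $\det(D_0)=-1$ and $\det(-D_0)=(-1)^5\det(D_0)=1$. In particular, for $n=5$ odd, $-D_0=J$ is the pure permutation matrix of $(15)(24)$ and is still an involution. The phrase ``negative determinant'' in the statement therefore has to be read with care: what matters is that $I_{12}$ still has $\det(I_{12})=-1$.

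The key observation is that the scalar $-E$ is central and has determinant $-1$ in odd dimension. So $-D_0$ acts by conjugation exactly as $D_0$ does, and every relation in Theorem \ref{ID0FU} that contains $D_0$ an even number of times holds verbatim with $-D_0$. In particular, consider the word for $t_{25}$ in the proof of Theorem \ref{ID0FU}. I count its occurrences of $D_0$: if the count is even, the same word in $\{I_{12},-D_0,F_U\}$ equals $t_{25}$. If it is odd, the word gives $-t_{25}$. In that case I square it to get $t_{25}^2$, or better, use the conjugates $(-D_0)t(-D_0)=D_0tD_0$ together with the commutator identities $[t_{ij},t_{jk}]=t_{ik}$ to recover genuine elementary transvections. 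Commutators are insensitive to the scalar sign, so from $\pm t_{25}$ and its conjugates by $F_U$, $-D_0$ and $P_5$ we obtain commutators equal to elementary transvections. Similarly, $P_5=F_U(-D_0)F_U(-D_0)=F_UD_0F_UD_0$ has an even number of $D_0$'s and is reproduced exactly.

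Having $P_5$ and an elementary transvection, I invoke \cite{VSEM} and \cite{Humph}, exactly as in Proposition 2, to get $SL(5,\mathbb Z)\le G'$. Then Lemma \ref{negativedet} with $\det(I_{12})=-1$ gives $G'=ESL(5,\mathbb Z)$. A shortcut is also available: $-E\in ESL(5,\mathbb Z)$, so once $G'\supseteq SL(5,\mathbb Z)$ and $I_{12}\in G'$, we get $-E\in G'$ and hence $D_0=-E\cdot(-D_0)\in G'$. This makes $G'$ contain the original generating set, and Theorem \ref{ID0FU} applies directly.

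The main obstacle is the parity bookkeeping of $D_0$ in the explicit 26-letter word. If the sign comes out wrong, I will have to produce an honest elementary transvection by a commutator, for example $[\pm t_{25},\,P_5^{-1}(\pm t_{25})P_5]$ for a suitable power of $P_5$ so that the indices chain. I expect this commutator step to work, since the scalar signs cancel in a commutator.
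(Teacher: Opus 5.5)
Your proposal is correct and follows essentially the paper's route. The $26$-letter word for $t_{25}$ in Theorem~\ref{ID0FU} contains $D_0$ exactly ten times, so it gives $t_{25}$ verbatim with $-D_0$ and the commutator fallback is never needed. The paper's $24$-letter word for $t_{41}$ is precisely letters $2$--$25$ of that word, i.e.\ $D_0t_{25}D_0$, and it is combined, as you do, with $P=F_U(-D_0)F_U(-D_0)$, \cite{VSEM} and Lemma~\ref{negativedet}. Your observation that $\det(-D_0)=+1$, so the determinant $-1$ must come from $I_{12}$, is right and corrects the statement's wording.
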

Proof.
The set of matrices $F_U, D_0, I_{12}$ was introduced in Theorem \ref{ID0FU}.
Therefore, we can analyze the tuple of matrices $F_U, -D_0, I_{12}$ as reachable using elementary Nielsen transforms.

Firstly, we write the expression for permutation matrix $P$
\begin{equation*}
    P = F_U \cdot (-D_0) \cdot F_U \cdot (-D_0) =
    \begin{pmatrix}
        0 & 0 & 1 & 0 & 0 \\
        0 & 0 & 0 & 1 & 0 \\
        0 & 0 & 0 & 0 & 1 \\
        1 & 0 & 0 & 0 & 0 \\
        0 & 1 & 0 & 0 & 0
    \end{pmatrix}.
\end{equation*}
It turns out that the set of matrices $F_U, -D_0, I_{12}$ can also generate the transvection $t_{41}$, namely by the following word of 24 elements:
\begin{align*}
     & t_{41} =  I_{12}F_U I_{12}(-D_0) I_{12}F_U I_{12}(-D_0) I_{12}(-D_0) F_U I_{12}(-D_0) I_{12}(-D_0) F_U \times \\
     &  \times I_{12}(-D_0) I_{12}(-D_0) F_U (-D_0) F_U I_{12}.
\end{align*}
So we may assert that in this generating set a transvection can be obtained by a word of length less than 24. Thus the matrices $F_U, -D_0, I_{12}$ generate $ESL(5, \mathbb Z)$.

\begin{remark}\label{FL}
The transformation to the previous generating set $\langle F_U, -D_0, I_{12} \rangle$ can be given by the formula: \begin{equation*}
    F_L = D_0 F_U D_0
\end{equation*}
proving that the set $F_L, D_0, I_{12}$ is also the generating set as well as $F_U, D_0, I_{12}$, because we made only equivalent invertible
Nielsen transformations of generators.
Thus, instead $F_U$ we can use the matrix $F_L$.
\end{remark}

\begin{theorem}\label{minimal3gen}
A minimal involutive generating set for $ESL_n(\mathbb{F}_2)$, $ESL_n(\mathbb{F}_p)$ with $n > 2$ contains at least 3 generators.

The exceptional case $ESL_2(\mathbb{F}_2)$ is 2-generated group, moreover \\ $ESL_2(\mathbb{F}_2) \simeq D_3 \simeq \langle \rho, t_{12} \rangle$.
\end{theorem}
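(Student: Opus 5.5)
The plan is to use the structure of groups generated by two involutions rather than comparing orders case by case. If $G=\langle a,b\rangle$ with $a^2=b^2=e$, then $c=ab$ satisfies $aca^{-1}=ba=c^{-1}$. Hence the cyclic subgroup $\langle c\rangle$ is normalized by both generators and is normal in $G$. It has index at most $2$, because $G=\langle c\rangle\cup a\langle c\rangle$. So every group generated by two involutions is a quotient of the infinite dihedral group. In particular it is a finite or infinite dihedral group, or cyclic of order at most $2$, and in every case it is solvable, with derived length at most $2$. A single involution generates a group of order $2$.

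\textbf{Lower bound.} I will show that $ESL_n(\mathbb{F}_p)$ is not solvable for $n>2$. It contains $SL_n(\mathbb{F}_p)$; for $p=2$ one even has $ESL_n(\mathbb{F}_2)=SL_n(\mathbb{F}_2)$, since $-1=1$. For $n\geq 3$ the group $SL_n(\mathbb{F}_p)$ is perfect: it is generated by transvections, and each transvection is a commutator by the relation $[t_{ij},t_{jk}]=t_{ik}$ with $i,j,k$ distinct, which needs $n\ge 3$. A nontrivial perfect group is not solvable. Alternatively, one can quote that $PSL_n(\mathbb{F}_p)$ is simple and non-abelian for $n\ge 3$, as already used around Theorem \ref{n=2k+1}. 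Since subgroups of solvable groups are solvable, $ESL_n(\mathbb{F}_p)$ is not solvable, so it cannot be generated by one or two involutions. Any involutive generating set therefore contains at least $3$ elements. Proposition 2 and Theorem \ref{ID0FU} show that $3$ is attained for $n=5$, which is consistent with the claim.

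\textbf{Exceptional case.} For $ESL_2(\mathbb{F}_2)$ I would argue as in the earlier proof of Theorem \ref{minimal3gen}. Since $-1=1$ in $\mathbb{F}_2$, we have $ESL_2(\mathbb{F}_2)=SL_2(\mathbb{F}_2)$, a group of order $6$ acting faithfully on the three nonzero vectors of $\mathbb{F}_2^2$, hence $\simeq S_3\simeq D_3$. Both $\rho$ and $t_{12}$ are involutions over $\mathbb{F}_2$. Their product $\rho t_{12}$ has order $3$, so they generate a subgroup of order $6$, which is the whole group. The case $n=2$ with $p$ odd is not part of the claim, so I do not need to handle it. If I wanted to, the same solvability argument fails there, and one would instead compare with the dihedral groups as the earlier proof does.

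\textbf{Main obstacle.} I expect the only real difficulty to be justifying perfectness of $SL_n(\mathbb{F}_p)$ for all $p$ when $n\ge 3$, including the small cases $p=2,3$. The commutator identity $[t_{ij},t_{jk}]=t_{ik}$ covers these uniformly. It remains to cite that elementary transvections generate $SL_n$ over a field (\cite{Humph}).
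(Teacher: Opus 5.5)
Your proof is correct and follows the same route as the paper's. The exceptional case goes through $ESL_2(\mathbb{F}_2)=SL_2(\mathbb{F}_2)\simeq S_3\simeq D_3$, and the lower bound uses that two involutions generate a solvable dihedral group, while $ESL_n(\mathbb{F}_p)$ is not solvable. You supply what the paper only asserts: the normal cyclic subgroup $\langle ab\rangle$ of index at most $2$, perfectness of $SL_n(\mathbb{F}_p)$ for every $n\ge 3$ (the paper writes $n>3$ and so skips $n=3$), and the check that $\rho t_{12}$ has order $3$. You also rightly drop the paper's element-order comparisons for $ESL_2(\mathbb{F}_3)$, which concern $n=2$ and lie outside the statement.
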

\begin{proof}
 Due to the well known isomorphism $SL_2(\mathbb{F}_2) \simeq S_3 \simeq D_3 $ and the fact that $1=-1$ in $\mathbb{F}_2$ we have $SL_2(\mathbb{F}_2)= ESL_2(\mathbb{F}_2)\simeq D_3$. As a direct consequence, two involutions generate $ESL_2(\mathbb{F}_2)$. For instance, $ESL_2(\mathbb{F}_2) \simeq \langle \rho, t_{12} \rangle$, note that $t_{12}$ is the involution in $ESL_2(\mathbb{F}_2)$.

To show that $ESL_2(\mathbb{F}_p)$ is not a group generated by two involutions, like the dihedral group we show an absence of isomorphism $ESL_2(\mathbb{F}_p)$ with $D_{2p}$.

In order to show that $ESL_2(\mathbb{F}_p)$ is not two involutions generated group as dihedral group, we show an absence of isomorphism $ESL_2(\mathbb{F}_p)$ with $D_{2p}$.

The order of $ESL_2(\mathbb{F}_3)$ is 48.
 $ESL_2(\mathbb{F}_3)$ does not contain an element of order 24, hence it cannot be isomorphic to $D_{48}$ having a cyclic group of order 24. Similarly $SL_2(\mathbb{F}_3)$ has no elements with order 12 so $SL_2(\mathbb{F}_3)$ is not isomorphic to $D_{12}$. Arguing in similar way we justify that $ESL_2(\mathbb{F}_p)$ has not two generating set.

 If $n>3$ then $D_{2p}$ is solvable in contrast with $ESL_2(\mathbb{F}_p)$. That completes the proof.
  \end{proof}

\begin{theorem}\label{minimal3gen}
A minimal involutive generating set for $PESL_n(\mathbb{F}_p)$, with $p > 2$ contains at least 3 generators.

The exceptional case $ESL_2(\mathbb{F}_2)$ is 2-generated group, moreover \\ $ESL_2(\mathbb{F}_2) \simeq D_3 \simeq \langle \rho, t_{12} \rangle$.
\end{theorem}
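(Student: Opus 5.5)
The argument rests on the classical fact that a group generated by two involutions is dihedral. If $a,b$ are involutions, then $\langle a,b\rangle$ is generated by $r=ab$ together with $a$, and $ara^{-1}=ba=r^{-1}$. Hence $\langle a,b\rangle\simeq D_m$, where $m$ is the order of $r$; in particular it contains a cyclic subgroup of index at most $2$ and is solvable. I will therefore show that $PESL_n(\mathbb{F}_p)$, for $p>2$ and $n\geq 2$, is neither cyclic nor dihedral. Then it cannot be generated by one or two involutions, and any involutive generating set (when one exists) has at least three elements.

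\textbf{Step 1: a nonabelian simple subgroup of small index.} By Theorem \ref{n=2k+1} and the preceding theorems on the centre, $PESL_n(\mathbb{F}_p)$ is either equal to $PSL_n(\mathbb{F}_p)$ or contains it as a normal subgroup of index $2$ (the case $n=2k$, $\left(\frac{-1}{p}\right)=-1$, diagram $\xi$). Here I use that $[ESL_n:SL_n]=2$ and that the centre of $ESL_n$ is scalar. For $p>2$, $PSL_n(\mathbb{F}_p)$ is nonabelian simple except when $(n,p)=(2,3)$.

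\textbf{Step 2: the non-exceptional cases.} Suppose $PESL_n(\mathbb{F}_p)=\langle a,b\rangle$ with $a,b$ involutions. Then it is dihedral, hence solvable. But it contains the nonabelian simple group $PSL_n(\mathbb{F}_p)$, which is not solvable, a contradiction. The one-generator case is trivial, since the group is not of order $2$.

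\textbf{Step 3: the exceptional case $(n,p)=(2,3)$.} Here I compute directly. The centre of $ESL_2(\mathbb{F}_3)$ is $\{\pm E\}$, because $\alpha^2=-1$ has no solution in $\mathbb{F}_3$. So $PESL_2(\mathbb{F}_3)$ has order $48/2=24$ and is $PGL_2(\mathbb{F}_3)\simeq S_4$. A dihedral group of order $24$ has an element of order $12$, but $S_4$ has no such element, so this case is also excluded.

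I also have to record the exceptional case $ESL_2(\mathbb{F}_2)\simeq D_3$. It follows verbatim from the earlier Theorem \ref{minimal3gen}: $1=-1$ in $\mathbb{F}_2$, so $ESL_2=SL_2\simeq S_3$, generated by the involutions $\rho$ and $t_{12}$.

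The main obstacle is Step 1. One must carefully identify $PESL_n(\mathbb{F}_p)$ with $PSL_n$ or with an index-$2$ overgroup of it in every parity and residue case, using the paper's centre computations. One must also invoke simplicity of $PSL_n(\mathbb{F}_p)$ correctly, including all exceptions. Once that is settled, the dihedral argument is immediate.
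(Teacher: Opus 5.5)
Your proposal is correct and follows the same route as the paper: two involutions generate a dihedral, hence solvable, group, while $PESL_n(\mathbb{F}_p)$ contains the nonabelian simple group $PSL_n(\mathbb{F}_p)$ as a normal subgroup of index at most $2$. Your version is in fact more careful than the paper's proof, which asserts non-solvability uniformly for $p>2$ and so misses $(n,p)=(2,3)$, where $PESL_2(\mathbb{F}_3)\simeq S_4$ is solvable and your element-order argument is genuinely needed (your only slip is the parenthetical tying the index-$2$ case to $\left(\frac{-1}{p}\right)=-1$, which is wrong for even $n\geq 4$, e.g.\ $n=4$, $p=5$; this is harmless because you only use index $\leq 2$).
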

\begin{proof}
As it was already proved in Theorem \ref{minimal3gen} $ESL_n(\mathbb{F}_p)$ possess minimal involutive generating set of 3 involutions. If one assume existence of 2 element generic set for $PESL_n(\mathbb{F}_p)$ then this group have to be isomorphic to $D_{2p}$. This lead us to contradiction with fact of solvability of $D_{2p}$ and non-solvability of $PESL_n(\mathbb{F}_p)$ because we have normal embedding $PSL_n(\mathbb{F}_p)$ in $PESL_n(\mathbb{F}_p)$ as the subgroup of index 2.
The alternating way of proof
 is based on the absence of pairwise collinearity of homomorphic images from the generators of the group $ESL_n(\mathbb{F}_p)$.

  As it was already proved $ESL_n(\mathbb{F}_p)$ possess minimal involutive generating set of 3 involutions two of which commutes, $(2\times2; 2)$-generated. As it was shown the class of these groups is closed with respect to homomorphic images \cite{Mark} therefore $PESL_n(\mathbb{F}_p)$ as quotient group admits 3 generators.

 Due to the well known isomorphism $SL_2(\mathbb{F}_2) \simeq S_3 \simeq D_3 $ and the fact that $1=-1$ in $\mathbb{F}_2$ we have $SL_2(\mathbb{F}_2)= ESL_2(\mathbb{F}_2)\simeq D_3$. As a direct consequence, two involutions generate $ESL_2(\mathbb{F}_2)$. For instance, $ESL_2(\mathbb{F}_2) \simeq \langle \rho, t_{12} \rangle$, note that $t_{12}$ is the involution in $ESL_2(\mathbb{F}_2)$.
\end{proof}

\begin{theorem}
 The minimal involutive generating set for $ESL_5(\mathbb{F}_2)$ is $S= \{I_{12}, -D_0, F_U \}$, where
\end{theorem}
\begin{equation*}
    I_{12} = \begin{pmatrix}
        -1 & 1 & 0 & 0 & 0  \\
        0  & 1 & 0 & 0 & 0  \\
        0  & 0 & 1 & 0 & 0  \\
        0  & 0 & 0 & 1 & 0  \\
        0  & 0 & 0 & 0 & 1
    \end{pmatrix},
        -D_0 = \begin{pmatrix}
        0  & 0  & 0  & 0  & 1 \\
        0  & 0  & 0  & 1 & 0  \\
        0  & 0  & 1 & 0  & 0  \\
        0  & 1 & 0  & 0  & 0  \\
        1 & 0  & 0  & 0  & 0
    \end{pmatrix}.
\end{equation*}
We consider the product $D_0 F_U I_{12}F_UD_0=I_{23}= \begin{pmatrix}
        1  & 0 &  0 & 0 & 0  \\
        0  & -1 & 1 & 0 & 0  \\
        0  & 0 &  1 & 0 & 0  \\
        0  & 0 &  0 & 1 & 0  \\
        0  & 0 &  0 & 0 & 1
    \end{pmatrix}.
 $ Now to express a transvection we multiply this involutions and exponent $I_{12}I_{23}$ to second power over $F_2$:  \\
 $I_{12}I_{23} =\begin{pmatrix}
        -1  & -1 & 1 & 0 & 0  \\
         0  & -1 & 1 & 0 & 0  \\
         0  & 0 &  1 & 0 & 0  \\
         0  & 0 &  0 & 1 & 0  \\
         0  & 0 &  0 & 0 & 1
    \end{pmatrix}$,    \\
   $(I_{12}I_{23})^2 =\begin{pmatrix}
        -1  & -1 & 1 & 0 & 0  \\
         0  & -1 & 1 & 0 & 0  \\
         0  & 0 &  1 & 0 & 0  \\
         0  & 0 &  0 & 1 & 0  \\
         0  & 0 &  0 & 0 & 1
     \end{pmatrix}^2          =\begin{pmatrix}
         1  & 2 & -1 & 0 & 0  \\
         0  & 1 &  0 & 0 & 0  \\
         0  & 0 &  1 & 0 & 0  \\
         0  & 0 &  0 & 1 & 0  \\
         0  & 0 &  0 & 0 & 1
     \end{pmatrix} \cong t_{13}(mod2) =\begin{pmatrix}
         1  & 0 &  1 & 0 & 0  \\
         0  & 1 &  0 & 0 & 0  \\
         0  & 0 &  1 & 0 & 0  \\
         0  & 0 &  0 & 1 & 0  \\
         0  & 0 &  0 & 0 & 1
     \end{pmatrix} $.
\\ By extension dimension of $I_{12}$ this result can be spread on arbitrary $n$. Thus, $S$ is the minimal involutive generating set for $ESL_n(\mathbb{F}_2)$. The permutation matrix $P$ in terms of this generic set takes such a form: $P = F_U \cdot (-D_0) \cdot F_U \cdot (-D_0)$. Thus, we obtain the generating set, its minimality follows from the fact that a group generated be two involutions is a dihedral group. Taking into account that a dihedral group $D_n$ is solvable, whereas $ESL_n(\mathbb{F}_2)$ wherein $n>3$, is not solvable, thence it is not generated by two involutions.


\begin{theorem}
{\sl The minimal involutive generating sets of $ESL_5(\mathbb{Z}), \, k\in \mathbb{N}$ as well of $ESL_5\left(  \mathbb{F}_p \right)$ consist of 3 following involutions $\left\langle D_1, F_U, I_{12}\right\rangle $.}
\end{theorem}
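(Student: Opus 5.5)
The plan is to follow the scheme of Proposition 2 and Theorem \ref{ID0FU}. First produce, from the alphabet $\{D_1, F_U, I_{12}\}$, one elementary transvection and one permutation matrix of order $5$. Then conjugation by the permutation matrix yields the full set of elementary transvections, which generate $SL(5,\mathbb{Z})$ by \cite{Humph, VSEM}. Since $\det I_{12}=-1$, Lemma \ref{negativedet} then gives all of $ESL(5,\mathbb{Z})$. The finite case follows by applying the reduction homomorphism $ESL(5,\mathbb{Z})\to ESL(5,\mathbb{F}_p)$, which is surjective because $SL(5,\mathbb{Z})\to SL(5,\mathbb{F}_p)$ is surjective (elementary transvections go to elementary transvections). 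The images of $D_1,F_U,I_{12}$ remain involutions, and this holds for $p=2$ as well. Minimality is then inherited from Theorem \ref{minimal3gen}: a group generated by two involutions is dihedral, hence solvable, whereas $ESL_5$ contains the non-solvable $SL_5$.

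To make the construction transparent rather than a blind word search, I would first record the conjugation action of $D_1$ and $F_U$ on the elementary involutions $I_{k,k+1}$. Write $D_1 = w_0 C$, where $w_0$ is the antidiagonal permutation $i\mapsto 6-i$ and $C=\mathrm{diag}(-1,1,-1,1,-1)$ records the column signs. Then $D_1 F_U D_1 = w_0 (C F_U C) w_0$ is the signed version of $F_L$, with $-1$ in positions $(2,5),(5,2),(3,4),(4,3)$. Hence $\langle D_1,F_U\rangle$ contains signed permutation matrices realizing both $(14)(23)$ and $(25)(34)$. Their product is a $5$-cycle, so I would aim to obtain $P_5$, possibly up to a diagonal sign matrix, as $F_U\, D_1 F_U D_1$, exactly as in Theorem \ref{ID0FU}. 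Next I would compute the conjugates of $I_{12}$ by $F_U$ and by $D_1F_UD_1$ in order to obtain a second involution of the form $I_{23}$ or $I_{k,k+1}$, possibly with signs. Then the computation of Example \ref{inv} and of the $\mathbb{F}_2$ theorem, namely $(I_{12}I_{23})^2 = t_{13}(\pm1)$ up to harmless extra entries, yields a transvection or a product that reduces to one. Lemma \ref{transvectionoverF_p} handles any coefficient $\pm1$ or $2$ over $\mathbb{F}_p$.

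The main obstacle is the sign bookkeeping introduced by $D_1$, since it is not a pure permutation matrix. The conjugates of $I_{12}$ may acquire off-diagonal entries $-1$, and the square $(I_{12}I')^2$ may come out as $t_{13}(a)$ multiplied by a residual diagonal or upper-triangular factor, as in the $ESL_5(\mathbb{F}_2)$ computation where an entry $2$ appeared. Over $\mathbb{Z}$ such a factor must be removed. My first attempt is to use the Nielsen-equivalence observation of Remark \ref{FL} and the Corollary on $-D_0$: if I can show that $D_1 = \Delta D_0$ for a diagonal sign matrix $\Delta$ already expressible in $\langle D_1,F_U,I_{12}\rangle$, then the word of $26$ letters for $t_{25}$ from Theorem \ref{ID0FU} transfers directly. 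Failing that, I would search directly, by computer if necessary, for a short word giving $t_{41}$, in analogy with the $24$- and $26$-letter words already found for the sets $\{I_{12},D_1,F_L\}$ and $\{I_{12},-D_0,F_U\}$. Once a single elementary transvection is in hand, the remaining steps are the routine ones already used in Proposition 2.
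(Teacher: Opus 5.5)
Your skeleton (one transvection plus a $5$-cycle, conjugation, Lemma~\ref{negativedet}, reduction mod $p$, dihedral minimality) is the paper's. The step that carries all the content, however, is producing an elementary transvection in $G=\langle D_1,F_U,I_{12}\rangle$. You defer it to a hoped-for Nielsen move or a computer search, and it cannot be done: over $\mathbb{Z}$ and over $\mathbb{F}_3$ the statement is false.

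Let $w=(1,-1,-1,1,1)^T$. Then $F_Uw=w$, $D_1w=-w$, and $I_{12}w=(-2,-1,-1,1,1)^T\equiv w \pmod 3$. Hence every $g\in G$ satisfies $gw\equiv \pm w \pmod 3$. By contrast, $t_{12}w=(0,-1,-1,1,1)^T$ and $t_{41}w\equiv(1,-1,-1,-1,1)^T$ are not $\equiv\pm w$. So $G$ contains no elementary transvection and is a proper subgroup of $ESL_5(\mathbb{Z})$. Its image mod $3$ lies in the stabilizer of the line $\mathbb{F}_3 w$, a proper subgroup of $ESL_5(\mathbb{F}_3)$. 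Structurally, with $S=\mathrm{diag}(1,-1,-1,1,1)$ one has $D_1=SD_0S^{-1}$, $F_U=SF_US^{-1}$ and $I_{12}=S(E-2e_{11}-e_{12})S^{-1}$. The group $\langle D_0,F_U,E-2e_{11}-e_{12}\rangle$ fixes $(1,1,1,1,1)^T$ up to sign modulo $3$.

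Both of your fallbacks fail on this invariant. Your $\Delta=D_1D_0=\mathrm{diag}(1,-1,1,-1,1)$ sends $w$ to $(1,1,-1,-1,1)^T\not\equiv\pm w$, so $\Delta\notin G$ and the $D_0$-words cannot be transferred. A search for $t_{41}$ will never succeed.

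The residual factor you anticipated is exactly what occurs. With $X=(D_1F_U)I_{12}(D_1F_U)^{-1}=E-2e_{22}-e_{23}$ one gets $(I_{12}X)^2=E+2e_{12}+e_{13}$. This is precisely the $12$-letter word $I_{12}D_1F_UI_{12}F_UD_1I_{12}D_1F_UI_{12}F_UD_1$ that the paper's proof asserts equals $t_{13}$. So the paper's argument has the same hole, and it cannot be repaired for $\mathbb{Z}$ or for $p=3$.

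What survives is the case $p\neq 3$. One checks that $G$ contains $t_{32}(3)$, so a suitable power gives $t_{32}$ modulo $p$. Conjugation by the signed permutation matrices in $G$, which realize every permutation in $S_5$, then yields all elementary transvections. From there your reduction and minimality steps go through.
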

\begin{proof}
 As was studied in \cite{VSEM} a minimal generating set from transvections of $ESL(n,\mathbb{Z})$ has size $n$.
The construction of the transition to the two-element set $\left\langle P, t_{13} \right\rangle $ similar to the minimal generating set from Proposition 1 of generators $P$ and elementary transvection $t_{13}$ is written in the following expressions $t_{13} = I_{12} D_1 F_U I_{12} F_U D_1 I_{12} D_1 F_U I_{12} F_U D_1$,
\begin{center}
         $P_5 = D_1 F_U D_1  F_U = \begin{pmatrix}
        0 & 0 & 0 & 1 & 0 \\
        0 & 0 & 0 & 0 & 1 \\
        1 & 0 & 0 & 0 & 0 \\
        0 & 1 & 0 & 0 & 0 \\
        0 & 0 & 1 & 0 & 0
    \end{pmatrix}$.
\end{center}
The assumption of two generating involutive sets gives us a dihedral group $D_{p}$ which is solvable, unlike the $|ESL _ 5\left( \mathbb{F}_p \right)|$, which contradicts the assumption.

This completes the proof.
\end{proof}

We present new involutions which can be adopted for each dimension and has the form \[
  P_{12} =  \begin{pmatrix} -1 & p-1 & 0 \\ 0 & 1 & 0 \\ 0 & 0 & 1 \end{pmatrix}.
    \]
\begin{theorem}
The minimal involutive generating set for $ESL_n(\mathbb{F}_3)$ consists of 3 involutions $P_{12}, -D_0, F_U$. 
\end{theorem}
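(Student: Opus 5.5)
The proof follows the template of the $\mathbb{F}_2$ case: first generate one elementary transvection and a cyclic permutation matrix from $P_{12}, -D_0, F_U$, then use the fact already invoked (\cite{VSEM}, \cite{Humph}) that a transvection together with a long cycle permutation matrix generates $SL_n(\mathbb{F}_3)$. Since $\det P_{12}=-1$, Lemma \ref{negativedet}, whose argument works verbatim over $\mathbb{F}_3$, then upgrades $SL_n(\mathbb{F}_3)$ to $ESL_n(\mathbb{F}_3)$. Over $\mathbb{F}_3$ we have $p-1=2=-1$, so $P_{12}$ is just $I_{12}$ with its $(1,2)$ entry negated. It is an involution because $(-1)\cdot 2+2\cdot 1=0$; in general $\begin{pmatrix}-1&a\\0&1\end{pmatrix}^2=E$ for any $a$. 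The matrices $-D_0$ and $F_U$ are permutation matrices of order $2$. All three are therefore involutions.

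\textbf{Step 1.} Conjugate $P_{12}$ by the monomial involutions to move its $2\times 2$ block along the diagonal. Exactly as in the relation $D_0F_UI_{12}F_UD_0=I_{23}$ from Theorem \ref{ID0FU}, compute $P_{23}:=(-D_0)F_UP_{12}F_U(-D_0)$. This should be the matrix with block $\begin{pmatrix}-1&2\\0&1\end{pmatrix}$ in rows and columns $2,3$, possibly with the sign of the off-diagonal entry changed, which is harmless. The scalar $-1$ cancels in conjugation, so $-D_0$ and $D_0$ act identically here.

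\textbf{Step 2.} Compute $(P_{12}P_{23})^2$. The analogous computation over $\mathbb{Z}$ gave $\begin{pmatrix}1&2&-1\\0&1&0\\0&0&1\end{pmatrix}$ in the upper corner. Here one entry will be a multiple of $2$ or $4$ and the other will be $\pm1$ or $\pm2$. The plan is to check directly that the $(1,2)$ entry vanishes mod $3$, so that the result is a transvection $T_{13}(a)$ with $a\in\mathbb{F}_3^*$. If it does not vanish, I would instead take a commutator $[P_{12},P_{23}]$ or the square of $P_{12}\,F_UP_{12}F_U$ so that the $(1,2)$ parts cancel. By the Corollary to Lemma \ref{transvectionoverF_p}, a power of $T_{13}(a)$ is the elementary $t_{13}$.

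\textbf{Step 3.} The permutation matrix $P=F_U(-D_0)F_U(-D_0)$ is the $5$-cycle displayed earlier. For general $n$, take $-D_0$ to be the anti-diagonal reversal and $F_U$ the reversal of the first $n-1$ coordinates. Their product is an $n$-cycle, and the same word gives a cyclic permutation matrix. Conjugating $t_{13}$ by powers of $P$ gives all $t_{i,i+2}$, and commutator relations then produce all elementary transvections, hence $SL_n(\mathbb{F}_3)$. For $n$ even, conjugation by $P$ only moves the index offset $2$ within one parity class. There I would also conjugate by $F_U$, which reverses indices $1,\dots,n-1$ and so changes offsets, to reach an adjacent transvection.

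\textbf{Step 4: minimality.} This uses Theorem \ref{minimal3gen}. Two involutions generate a dihedral group, which is solvable. $ESL_n(\mathbb{F}_3)$ contains the nonsolvable $SL_n(\mathbb{F}_3)$ for $n\ge3$, so at least $3$ involutions are needed.

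The main obstacle is Step 2. The sign and coefficient bookkeeping mod $3$ in the product $P_{12}P_{23}$ must actually yield a pure transvection, since the stray $2$ in the $\mathbb{Z}$ computation does not vanish mod $3$ automatically. The fallback is a short search among the words listed above.
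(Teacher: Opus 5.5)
Your Step~2 is not merely the delicate step. No word in these generators can complete it, because the group they generate is too small, so the statement itself is false.

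Over $\mathbb{F}_3$ the only non-identity row of $P_{12}$ is $(-1,2,0,\dots,0)$. Its entries sum to $p-2$, which is $1$ exactly when $p=3$. Hence $P_{12}\mathbf{1}=\mathbf{1}$ for $\mathbf{1}=(1,\dots,1)^T$, and the permutation matrices $-D_0$ and $F_U$ fix $\mathbf{1}$ as well. So every element of $G=\langle P_{12},-D_0,F_U\rangle$ fixes $\mathbf{1}$. By contrast, every elementary transvection $T_{ij}(c)=E+ce_{ij}$ with $c\neq 0$ sends $\mathbf{1}$ to $\mathbf{1}+ce_i$. Consequences:
\begin{itemize}
\item $G$ contains no elementary transvection at all.
\item $G$ lies in the stabilizer of a nonzero vector. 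This is a proper subgroup of $ESL_n(\mathbb{F}_3)=GL_n(\mathbb{F}_3)$; for instance $-E\notin G$.
\end{itemize}

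The explicit computation agrees. Your Step~1 is correct: $P_{23}=(-D_0)F_UP_{12}F_U(-D_0)=E-2e_{22}+2e_{23}$. Then $(P_{12}P_{23})^2=E+4e_{12}-4e_{13}\equiv E+e_{12}-e_{13}$. This is a transvection $E+e_1v^T$ with $v=(0,1,-1,0,\dots,0)$, whose coordinates sum to $0$, exactly as invariance of $\mathbf{1}$ forces. Your fallbacks give nothing new:
\begin{itemize}
\item For involutions, $[P_{12},P_{23}]=(P_{12}P_{23})^2$ is the same element.
\item For $n=5$, $F_UP_{12}F_U=E-2e_{44}+2e_{43}$ commutes with $P_{12}$, so $(P_{12}F_UP_{12}F_U)^2=E$.
\end{itemize}
No search among words can succeed.

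The paper's proof takes the same route as yours: it seeks a transvection as a power of $P=P_{12}\cdot D_0F_UP_{12}F_UD_0$. It rests on the claim $P^3\equiv -t_{13}$ over $\mathbb{F}_3$, which is a miscalculation. $P$ is upper triangular with diagonal $(-1,-1,1)$, so $P^3$ has $(3,3)$-entry $1$. In fact $P^3\equiv\begin{pmatrix}-1&0&-1\\0&-1&-1\\0&0&1\end{pmatrix}$, which has determinant $1$, whereas $\det(-t_{13})=-1$.

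Your minimality argument in Step~4 is fine: two involutions generate a solvable dihedral group, while $SL_n(\mathbb{F}_3)$ is not solvable for $n\ge 3$. The generation half of the statement, however, is false for the triple $\{P_{12},-D_0,F_U\}$. Any correct version must change the generating set, for example by replacing $P_{12}$ with an involution that does not fix $\mathbf{1}$. Steps~1--3 would then have to be redone and actually verified.
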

\begin{proof} To ensure that a transvection is expressed through the involutions, we constructed over the field $F_3$ the following product $P_{12} (D_0 F_U P_{12}F_U D_0) =$
  \[ =
    \begin{pmatrix} -1 & p-1 & 0 \\ 0 & 1 & 0 \\ 0 & 0 & 1 \end{pmatrix}
    \begin{pmatrix} 1 & 0 & 0 \\ 0 & -1 & p-1 \\ 0 & 0 & 1 \end{pmatrix}
    =
    \begin{pmatrix} -1 & -p+1 & (p-1)^2 \\ 0 & -1 & p-1 \\ 0 & 0 & 1 \end{pmatrix} =P.
\]

Consider the powers of the matrix $P$, namely $P^3$:
\[
P^3 = \begin{pmatrix} -1 & 3 & 2 \\ 0 & -1 & 0 \\ 0 & 0 & -1 \end{pmatrix}
\]
Applying the rules of modular arithmetic over the field $\mathbb{Z}_3$ (where $3 \equiv 0 \pmod 3$ and $2 \equiv -1 \pmod 3$), we obtain the matrix elements:
\[
P^3 \equiv \begin{pmatrix} -1 & 0 & -1 \\ 0 & -1 & 0 \\ 0 & 0 & -1 \end{pmatrix}\pmod 3
\]
This result coincides precisely with the negative elementary matrix (transvection) $t_{13}$:
$P^3 \equiv -t_{13} \pmod 3$. We complement the diagonal with units in the lower corner, thereby increasing the matrix's dimension. This completes the proof.
\end{proof}



\section*{Commuting involutions of type $2\times2, 2$}

Consider the following involutions:
\begin{center}
    $
        I_{23} = \begin{pmatrix}
            1 & 0 & 0 & 0 & 0  \\
            0 & -1 & 1 & 0 & 0  \\
            0 & 0  & 1 & 0 & 0  \\
            0 & 0  & 0 & 1 & 0  \\
            0 & 0  & 0 & 0 & -1
        \end{pmatrix},
        I_{43} = \begin{pmatrix}
            1 & 0 & 0 & 0 & 0 \\
            0 & 1 & 0 & 0 & 0 \\
            0 & 0 & 1 & 0 & 0 \\
            0 & 0 & 1 & -1& 0 \\
            0 & 0 & 0 & 0 & 1
        \end{pmatrix}
            $ and $    I_{23} I_{43} =
    \begin{pmatrix}
        1 & 0 & 0 & 0 & 0 \\
        0 & -1 & 1 & 0 & 0 \\
        0 & 0 & 1 & 0 & 0 \\
        0 & 0 & 1 & -1& 0 \\
        0 & 0 & 0 & 0 & 1
    \end{pmatrix} =
    I_{43} I_{23}.
$
\end{center}

\begin{theorem}\label{Maztriple}
    The set of involutions $\left\langle I_{23, 43}, D_0, F_L \right\rangle $ with two commuting involutions $I_{23, 43}, D_0$ is Mazurov triple \cite{Maz}  generates $ESL_3[\mathbb{Z}]$.
\end{theorem}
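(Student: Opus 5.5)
The plan is to reduce the statement to the two facts already used repeatedly in the paper. First, a single elementary transvection together with a suitable permutation matrix generates $SL(5,\mathbb{Z})$, by \cite{VSEM} and Theorem 2.1 of \cite{Humph}. Second, Lemma \ref{negativedet} then passes from $SL$ to $ESL$ once one generator of determinant $-1$ is available. I read the statement in dimension $5$, since all three matrices are $5\times 5$; the argument is dimension-independent once the involutions are extended by identity blocks.

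\textbf{Step 1: involutions and commutation.} I first check that $I_{23,43}=I_{23}I_{43}$, $D_0$ and $F_L=D_0F_UD_0$ are involutions, and that $I_{23,43}$ commutes with $D_0$.
\begin{itemize}
\item $I_{23,43}$ is an involution by the preceding display together with the involution lemma, since $I_{23}$ and $I_{43}$ commute.
\item $F_L$ is the permutation matrix of $(25)(34)$, hence an involution.
\item $D_0=-J$, where $J$ is the antidiagonal permutation matrix of $(15)(24)$, so $D_0$ is an involution.
\item Conjugation by $D_0$ sends the entry in position $(i,j)$ to position $(6-i,6-j)$. The nonzero pattern of $I_{23,43}$ is $(1,1),(2,2),(2,3),(3,3),(4,3),(4,4),(5,5)$, with values $1,-1,1,1,1,-1,1$. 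This pattern is symmetric under $(i,j)\mapsto(6-i,6-j)$ with the same values, so $D_0I_{23,43}D_0=I_{23,43}$. This gives the commuting pair, i.e. the Mazurov triple shape of \cite{Maz}.
\item For the determinants, $\det D_0=(-1)^5\det J=-1$, since $(15)(24)$ is even. This is the negative-determinant generator needed later.
\end{itemize}

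\textbf{Step 2: a transvection and a permutation matrix.} I next produce a permutation matrix of order $5$ and one elementary transvection.
\begin{itemize}
\item The product $D_0F_L$ is, up to sign, the permutation matrix of $(15)(24)\cdot(25)(34)$, a $5$-cycle. An even power, or $(D_0F_L)^5$ combined with sign bookkeeping, gives a genuine $5$-cycle permutation matrix $P$, as in the expression $P_5=F_U D_1F_UD_1$ of Theorem \ref{ID0FU}.
\item For the transvection I use the mechanism of Example \ref{inv}: if $A,B$ are involutions of the form $-1$ on one diagonal place plus one off-diagonal $1$, then $(AB)^2$ is an elementary transvection. So I look for conjugates $X=g\,I_{23,43}\,g^{-1}$ with $g$ a word in $P,F_L$ such that $X$ and $I_{23,43}$ overlap in exactly one index.
\item A concrete first attempt is $X=F_LI_{23,43}F_L$. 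It moves the blocks on rows $2,3,4$ to rows $5,4,3$. I then compute $(I_{23,43}X)^2$, or $(I_{23,43}\,P I_{23,43}P^{-1})^k$ for small $k$, and expect the diagonal $-1$'s to cancel, leaving $E+\lambda e_{ij}$ with $\lambda=\pm1$ or $\pm2$.
\item If $\lambda=\pm 2$ occurs, as it did for $(I_{12}I_{23})^2$ in the $\mathbb{F}_2$ theorem, I combine two such words through the commutator relation $[t_{ij},t_{jk}]=t_{ik}$ to reach coefficient $1$ over $\mathbb{Z}$. Over $\mathbb{F}_p$, $p$ odd, Lemma \ref{transvectionoverF_p} handles it directly.
\end{itemize}

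\textbf{Step 3: conclusion and minimality.} Conjugating the transvection by powers of $P$ and by $F_L$ yields all $t_{ij}$, hence $SL(5,\mathbb{Z})$. Lemma \ref{negativedet}, applied with $D_0$, then yields $ESL(5,\mathbb{Z})$. Reduction mod $p$ gives the same conclusion for $ESL(5,\mathbb{F}_p)$. Minimality follows from the dihedral argument of Theorem \ref{minimal3gen}: two involutions generate a dihedral, hence solvable, group.

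The main obstacle is Step 2, finding an explicit short word that yields an elementary transvection rather than a matrix with extra $\pm1$ diagonal entries or coefficient $2$. The $2\times 2$ blocks of $I_{23,43}$ are coupled (rows $2$ and $4$ both involve column $3$), so naive squaring may leave residual entries. I would search systematically over conjugates by $\langle D_0,F_L\rangle$, which is dihedral of order $10$, pairing $I_{23,43}$ with each conjugate and testing $(I_{23,43}X)^2$ and $(I_{23,43}X)^3$. Failing that, I would follow the paper's earlier method and find a longer word by computer search, as was done for $t_{41}$ and $t_{25}$ in Proposition 2 and Theorem \ref{ID0FU}.
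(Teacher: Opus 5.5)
\textbf{The gap is in Step 2, and it cannot be filled.} A longer word will not help: no word in $I_{23,43},D_0,F_L$ is an elementary transvection over $\mathbb{Z}$ at all. Put $\mathbf{1}^{T}=(1,1,1,1,1)$, so that $\mathbf{1}^{T}M$ is the row of column sums of $M$. Then
\[
\mathbf{1}^{T}I_{23,43}=(1,-1,3,-1,1),\qquad \mathbf{1}^{T}D_0=-\mathbf{1}^{T},\qquad \mathbf{1}^{T}F_L=\mathbf{1}^{T},
\]
and all three are congruent to $\mathbf{1}^{T}$ modulo $2$. If $\mathbf{1}^{T}g\equiv\mathbf{1}^{T}$ and $\mathbf{1}^{T}h\equiv\mathbf{1}^{T}\pmod 2$, then $\mathbf{1}^{T}gh\equiv\mathbf{1}^{T}h\equiv\mathbf{1}^{T}$. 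Hence $\Gamma=\{g\in GL_5(\mathbb{Z}) : \mathbf{1}^{T}g\equiv\mathbf{1}^{T}\pmod 2\}$ is a subgroup: it is the preimage of the stabilizer of a nonzero functional in $GL_5(\mathbb{F}_2)$, of index $31$. It contains $G=\langle I_{23,43},D_0,F_L\rangle$.

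On the other hand, $\mathbf{1}^{T}t_{ij}=\mathbf{1}^{T}+e_j^{T}$ has the even entry $2$ in place $j$. So no $t_{ij}$ with $i\neq j$ lies in $G$, and $SL_5(\mathbb{Z})\not\le G$. The statement itself is therefore false over $\mathbb{Z}$. Reducing mod $2$, $G$ does not map onto $ESL_5(\mathbb{F}_2)=GL_5(\mathbb{F}_2)$ either. Your Step 1 computations (involutions, $[D_0,I_{23,43}]=E$, $\det D_0=-1$) are all correct. But the search you propose, over conjugates by the dihedral group $\langle D_0,F_L\rangle$ or by computer, can at best produce elements $E+\lambda e_{ij}$ with $\lambda$ even.

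Your fallback for $\lambda=\pm2$ also fails on its own terms. We have $[t_{ij}(a),t_{jk}(b)]=t_{ik}(ab)$, so commutators multiply coefficients, and over $\mathbb{Z}$ even coefficients stay even. Only modulo an odd prime does the paper's lemma on powers of $T_{ij}(a)$ let you pass from $2$ to $1$.

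The paper's own argument does not close this gap either. It derives the claim from the theorem on the triple $\{D_0,F_U,I_{12}\}$ by ``Nielsen transformations''. However, replacing $I_{23}$ by $I_{23}\,D_0I_{23}D_0=I_{23,43}$ is not a Nielsen move and is not invertible. Indeed $I_{23}$, with column sums $(1,-1,2,1,1)$, and $I_{12}$, with column sums $(-1,2,1,1,1)$, violate the invariant above, so neither lies in $G$. A correct result would need a different triple, or a restriction to $ESL_5(\mathbb{F}_p)$ with $p$ odd. There this particular obstruction disappears, though that case would still need its own proof.
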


Since $[I_{23}, I_{43}]=E$, their product $I_{23} I_{43}= I_{23, 43}$ is also an involution. The Nielsen transformation
$(D_0 F_U) I_{12} ( D_0 F_U)^{-1} = (D_0 F_U) I_{12} (F_U D_0 )=I_{23}$ lead us to generic set $\left\langle I_{23}, D_0, F_L \right\rangle $.
Consider this set of generators $F_L, D_0, I_{23, 43}$, where
\begin{equation*}
    F_L = \begin{pmatrix}
            1 & 0 & 0 & 0 & 0 \\
            0 & 0 & 0 & 0 & 1 \\
            0 & 0 & 0 & 1 & 0 \\
            0 & 0 & 1 & 0 & 0 \\
            0 & 1 & 0 & 0 & 0
        \end{pmatrix}, \quad
    D_0 = \begin{pmatrix}
            0  & 0  & 0  & 0  & -1 \\
            0  & 0  & 0  & -1 & 0  \\
            0  & 0  & -1 & 0  & 0  \\
            0  & -1 & 0  & 0  & 0  \\
            -1 & 0  & 0  & 0  & 0
        \end{pmatrix}.
    \end{equation*}

    Proof. In view of Theorem \ref{ID0FU} $\left\langle I_{12}, D_0, F_L \right\rangle$ generates $ESL_5[\mathbb{Z}]$, then
    Remark \ref{FL} $F_L = D_0 F_U D_0$ that entails Nielsen transformation $F_U = D_0 F_L D_0$ to generic set $\left\langle I_{23, 43}, F_U, D_0 \right\rangle$. Since $I_{32} = F_U  I_{23} F_U$ as well as $I_{43} = D_0  I_{23} D_0 $ also, $I_{23} D_0 I_{23} D_0= I_{23, 43}$, we make an equivalent transformation of Nielsen to the new generating set $\left\langle I_{23, 43}, D_0, F_L \right\rangle $. Taking into account that $\left\langle I_{12}, D_0, F_L \right\rangle $ generates $ESL_5[\mathbb{Z}]$, we deduce that $\left\langle I_{23, 43}, D_0, F_L \right\rangle $ also generates it.

    Moreover, due to the relation we obtain a new triple with\textit{\textbf{ two commuting involutions}}
$$ D^2_0=e, I^2_{23, 43}=e,  F_L,   [D_0, I_{23, 43}]=e,  I_{43} = D_0  I_{23} D_0.$$

Based on $I_{23} D_0 I_{23} D_0= I_{23, 43}$ one can verify that \\
$[D^2_0, I_{23, 43}]= D_0 (I_{23} D_0 I_{23} D_0) D_0(I_{23} D_0 I_{23} D_0)^{-1}  = D_0 e D_0 =e$, $D^2_0 =e$, $I_{23, 43}^2=e$.

    \begin{remark}
    Note that in the generic set $\langle F_L,  I_{23, 54}, D_0 \rangle$ of $ESL_5(\mathbb{F}_p)$, a new type of involution pair commutes, namely $ [F_L,  I_{23, 54}]=e$.
    \end{remark}
\begin{proof}
The proof is similar checking with using the relation $I_{ 54} = F_L I_{23} F_L$.

Another triple $ I_{54}=F_L  I_{23} F_L $ then the commuting pair is
    $[ F_L,  I_{23, 54} ]=e$.
\end{proof}

   The new example is $ \langle I_{12, 54}, D_0, F_L \rangle$ with the commuting property $[I_{12, 54}, D_0 ]=e$.
    If we consider the involution $I_{12}$ and note that $D_0 I_{12}D_0 =I_{54}$ therefore $I_{12, 54} = I_{12} \times I_{54}$. As a result we get the new example of generating set with commuting pair $[I_{12, 54}, D_0 ]=e$.

\begin{corollary}
     The minimal involutive generators set $S$ of $ESL_5(\mathbb{F}_p)$ consisting of $I_{23,43}, F_{L}, D_{0}$ is the Mazurov triple $(2\times2, 2)$ \cite{Maz} as well as set of generators satisfying property $sggi$ \cite{Leem}.
\end{corollary}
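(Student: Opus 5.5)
The statement has three parts: the triple $S=\{I_{23,43},F_L,D_0\}$ generates $ESL_5(\mathbb{F}_p)$, it is minimal, and it has the Mazurov shape $(2\times2,2)$. I would treat each in turn, reusing the integral results and then reducing modulo $p$.

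\emph{Generation.} The starting point is Theorem \ref{Maztriple}, where $\langle I_{23,43}, D_0, F_L\rangle$ is obtained from the generating triple of Theorem \ref{ID0FU} by Nielsen transformations, so it generates $ESL_5[\mathbb{Z}]$. I would push this through the reduction homomorphism $ESL(5,\mathbb{Z})\xrightarrow{\bmod p} ESL(5,\mathbb{F}_p)$, as in the proof of Proposition 2. Its image contains the images of all integral transvections. Since $SL_5(\mathbb{Z})\to SL_5(\mathbb{F}_p)$ is surjective (elementary transvections generate $SL_n$ over a field), the image contains $SL_5(\mathbb{F}_p)$.

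The image of $D_0$ has determinant $-1\neq 1$ for $p>2$. The finite-field analogue of Lemma \ref{negativedet}, with the same coset argument, then gives all of $ESL_5(\mathbb{F}_p)$. For $p=2$ we have $ESL=SL$, so nothing more is needed.

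\emph{Minimality.} Suppose the group were generated by two involutions. Then it would be dihedral, hence solvable. But it contains $SL_5(\mathbb{F}_p)$, whose quotient $PSL_5(\mathbb{F}_p)$ is simple and nonabelian, so it is not solvable. This is exactly the argument of Theorem \ref{minimal3gen}. A single involution generates only a group of order 2, so three is the minimum.

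\emph{Mazurov and $sggi$ properties.} I would check the relations directly:
\[
D_0^2=F_L^2=I_{23,43}^2=E, \qquad [D_0,I_{23,43}]=E .
\]
The commutation follows from $I_{43}=D_0I_{23}D_0$ together with $[I_{23},I_{43}]=E$: conjugation by $D_0$ maps $I_{23}I_{43}$ to $I_{43}I_{23}=I_{23}I_{43}$. These identities hold over $\mathbb{Z}$ and therefore also modulo $p$. We should also confirm that the images are nontrivial involutions and are pairwise distinct modulo $p$; for example $D_0\neq E$ since its off-diagonal entries are $\pm1$. Together these give the $(2\times2,2)$ type, meaning three involutions two of which commute, and the $sggi$ property in the sense of \cite{Leem}.

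\emph{Main obstacle.} The delicate step is the conjugation identity $I_{43}=D_0I_{23}D_0$ with the paper's specific matrices, which should be checked honestly. Conjugating by the antidiagonal $D_0$ sends $e_{ij}$ to $e_{6-i,6-j}$, so $I_{23}$ is carried to an involution supported at indices $4,3$. One must confirm that the result matches the stated $I_{43}$ and that it commutes with $I_{23}$, including signs and the stray $-1$ in the $(5,5)$ entry of $I_{23}$. If the identity fails as written, I would redefine $I_{43}:=D_0I_{23}D_0$. Commutation with $D_0$ then holds automatically, and generation is unaffected because it only uses the Nielsen-equivalence to the triple of Theorem \ref{ID0FU}.
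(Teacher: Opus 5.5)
\textbf{The generation step fails.} Your route is the paper's own: check $[D_0,I_{23,43}]=E$ directly, take generation from Theorem~\ref{Maztriple}, then reduce modulo $p$. The involution and commutation checks are fine, and so is the dihedral/solvability argument for minimality (which presupposes generation). Generation is the problem.

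The triple $\langle I_{23},D_0,F_L\rangle$ is Nielsen-equivalent to $\langle I_{12},D_0,F_U\rangle$. But passing from it to $\langle I_{23,43},D_0,F_L\rangle$ replaces $I_{23}$ by $I_{23}D_0I_{23}D_0=(I_{23}D_0)^2$. That is not a Nielsen transformation, because the replaced generator occurs twice in the new word. One only gets $\langle (I_{23}D_0)^2,D_0,F_L\rangle\subseteq\langle I_{23},D_0,F_L\rangle$, and here the inclusion is strict. So your fallback, that generation ``only uses the Nielsen-equivalence,'' is exactly the step that breaks. By comparison, the $(5,5)$ entry you worried about is a harmless typo: with $I_{23}$ taken with $(5,5)$ entry $1$, as in its earlier definition, one has $D_0I_{23}D_0=I_{43}$.

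\textbf{An explicit obstruction.} Look at column sums.
\begin{itemize}
\item $I_{23,43}$ has column sums $(1,-1,3,-1,1)$; under the other reading of the $(5,5)$ entry they are still all odd.
\item $D_0$ has all column sums $-1$.
\item $F_L$ has all column sums $1$.
\end{itemize}
With $\mathbf{1}=(1,\dots,1)^T$, the set $\{M\in GL_5(\mathbb{Z}) : \mathbf{1}^TM\equiv\mathbf{1}^T \pmod{2}\}$ is a subgroup. It contains all three generators but not $t_{12}$, since $\mathbf{1}^Tt_{12}=(1,2,1,1,1)$. Hence $\langle I_{23,43},D_0,F_L\rangle\neq ESL_5(\mathbb{Z})$, so Theorem~\ref{Maztriple} itself is false.

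Modulo $2$, the image fixes the covector $(1,1,1,1,1)$. It is therefore a proper subgroup, of index at least $31$, of $SL_5(\mathbb{F}_2)=ESL_5(\mathbb{F}_2)$. Your sentence ``for $p=2$ nothing more is needed'' asserts something false, and the statement fails for $p=2$.

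For odd $p$ there is no valid integral generation result to reduce, so you need an independent argument. Options include explicit words in $I_{23,43},D_0,F_L$ that give elementary transvections modulo $p$, or a proof that the integral group contains a principal congruence subgroup of level prime to $p$.
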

\begin{proof}
      The proof is based on directly verification that $[D_0, I_{23, 43}]=e$ and the results established in Theorem \ref{Maztriple}.
\end{proof}

The matrix $F_l$ and $D_0$ admits natural generalization on the case $n=9$.
We will transfer the same notations for the introduced involutive matrices, but constructed in dimension 9.






\begin{remark}
    The set $D_0, I_{12,54}, F_L$ is a generating set for $ESL_n(\mathbb{F}_p)$ with the commutation relation $[I_{12,54}, D_0]=E$.
\end{remark}

\begin{proof} Taking into account that in accordance with Theorem \label{ID0FU} $I_{12}, D_0, F_U$ is the set of generators for $ESL_5(\mathbb{F}_p)$
    the main part of the proof is based on the fact that Nielsen transformations from $I_{12}$ to $ I_{12,54}$ exists due to conjugation by $D_0$, that is, $D_0^{-1} I_{12}  D_0 =I_{54}$.
    The generator $F_U$ can be expressed as $F_U = D_0 F_L D_0$. By multiplying these involutions $I_{12}$, $I_{54}$, we express $I_{12, 54}$.
\end{proof}


{\sl
 Recall that a group is called a
 \textit{string group generated by involutions} $\{ \rho_0, \rho_1, \ldots, \rho_{n-1} \}$ (sggi) if there exists such ordering of the involutions wherein $\rho_i\rho_j = \rho_j\rho_i$ for every $i, j \in \{0, \ldots , n - 1 \}$ such that $| i - j| > 1$ \cite{Leem}.}

 \begin{defn}
   By the \textit{extended group} of \textit{(upper) unitriangular matrices} $EUT_n({\mathbb{F}})$ (over a field $\mathbb{F}$) we mean the unitriangular group $UT_n(\mathbb{F})$ \cite{SusUTn} that admits not only 1 but also -1 on the diagonal.

 We prove that $EUT_3[\mathbb{Z}] \simeq \langle \rho_1, \rho_2, \rho_3, \rho_4 \rangle $ is $sggi$ group \cite{Leem} with the following involutive generating set:
$$
\rho_0 = \begin{pmatrix} -1 & 0 & 0 \\ 0 & 1 & 0 \\ 0 & 0 & 1 \end{pmatrix}, \quad
\rho_1 = \begin{pmatrix} -1 & 1 & 0 \\ 0 & 1 & 0 \\ 0 & 0 & 1 \end{pmatrix}, \quad
\rho_2 = \begin{pmatrix} 1 & 0 & 0 \\ 0 & 1 & 1 \\ 0 & 0 & -1 \end{pmatrix}, \\
\rho_3 = \begin{pmatrix} 1 & 0 & 0 \\ 0 & 1 & 0 \\ 0 & 0 & -1 \end{pmatrix},
$$

{\sl where  $[\rho_1, \rho_3] = e$, $[\rho_0, \rho_3] = e$, $[\rho_0, \rho_2] = e$, furthermore this defines the involution generating set of type $(2 \times 2, 2 \times 2)$.}
 \end{defn}

 {\bf Remark}. {\sl If we set $D_0= \rho_0$, $F_L=\rho_1$, $I_{23, 43}=\rho_2$, then this order of involutions justifies that $ESL_5[\mathbb{Z}]$ and $ESL_5(\mathbb{F}_p)$ are string groups generated by involutions (\textit{$sggi$}) \cite{Leem}.}

  \section{Minimal generating set with a fourth order element.}

Let ${{I}_{11}}=\left( \begin{matrix} -1 & 1  \\
  \, 0 &  1  \\
   \end{matrix} \right)$, $T_4=\left( \begin{matrix} 0 & -1  \\
   1 &  0  \\
   \end{matrix} \right)$
and
$\rho=\left( \begin{matrix} 0 & 1  \\
   1 &  0  \\
   \end{matrix} \right)$.

   \begin{proposition}
   The set $S= \langle T_4, I_{11}, \rho \rangle$ generates $ESL_2(\mathbb{Z})$.
   \end{proposition}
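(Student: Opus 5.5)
The plan is to reduce to the earlier Proposition stating that $\langle t_{12},\rho\rangle$ generates $ESL_2(\mathbb{Z})$. By Lemma \ref{negativedet}, it is enough to show that $\langle T_4, I_{11}, \rho\rangle$ contains $SL_2(\mathbb{Z})$. It also contains a matrix of determinant $-1$ (both $\rho$ and $I_{11}$ have determinant $-1$). Note also that $\rho$ already lies in $S$, so $ESL_2(\mathbb{Z})$ will follow as soon as $t_{12}\in\langle S\rangle$.

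The first step is to manufacture $t_{12}$ from the involutions. I would compute
\[
I_{11}\rho=\begin{pmatrix} -1 & 1 \\ 0 & 1 \end{pmatrix}\begin{pmatrix} 0 & 1 \\ 1 & 0 \end{pmatrix}=\begin{pmatrix} 1 & -1 \\ 1 & 0 \end{pmatrix},
\]
which has determinant $1$ and trace $1$, hence order $6$. It is more useful to look for a short word giving a unipotent matrix. For instance, with $D_1=\mathrm{diag}(-1,1)$ one has $I_{11}=D_1 t_{12}$. So I would try to obtain $D_1$ from $T_4$ and $\rho$. A direct check gives
\[
T_4\rho=\begin{pmatrix} 0 & -1 \\ 1 & 0 \end{pmatrix}\begin{pmatrix} 0 & 1 \\ 1 & 0 \end{pmatrix}=\begin{pmatrix} -1 & 0 \\ 0 & 1 \end{pmatrix}=D_1.
\]
Therefore $t_{12}=D_1 I_{11}=T_4\rho I_{11}$, since $D_1^2=E$.

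The second step is to obtain $t_{21}=\rho t_{12}\rho$, exactly as in the proof of the earlier Proposition. Alternatively, one can use $T_4 t_{12} T_4^{-1}=t_{21}^{-1}$. Then I invoke the classical fact that $t_{12}$ and $t_{21}$ generate $SL_2(\mathbb{Z})$. (Equivalently, $T_4$ and $t_{12}$ generate it, since $T_4$ and $T_4t_{12}$ are the standard generators of orders $4$ and $6$.) Finally, Lemma \ref{negativedet} applied with $\rho$ gives all of $ESL_2(\mathbb{Z})$.

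The only real obstacle is finding the word expressing a transvection. The identity $T_4\rho=D_1$ resolves it, and it only needs a one-line matrix check. Everything else is a citation of results already used in the paper.

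If one also wants to comment on minimality, note that $T_4$ is redundant modulo the other two generators only if $D_1\in\langle I_{11},\rho\rangle$. But the statement only asserts generation, so I would not pursue minimality here.
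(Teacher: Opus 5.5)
Your argument has the same skeleton as the paper's (produce $t_{12}$, conjugate by $\rho$ to get $t_{21}$, invoke $\langle t_{12},t_{21}\rangle = SL_2(\mathbb{Z})$, then Lemma~\ref{negativedet} with $\rho$), and at the one nontrivial step your word is the sound one: the paper asserts $I_{11}T_4=t_{12}$, which cannot hold since $\det(I_{11}T_4)=-1$ (the product is $\begin{pmatrix}1&1\\1&0\end{pmatrix}$), whereas your identity $T_4\rho=D_1$ supplies exactly the missing right factor, because $t_{12}=I_{11}D_1=I_{11}T_4\rho$. One harmless slip: $I_{11}=t_{12}D_1=D_1t_{12}^{-1}$, not $D_1t_{12}$, so your word $T_4\rho I_{11}$ equals $t_{12}^{-1}$ rather than $t_{12}$; this changes nothing, since the generated subgroup still contains $t_{12}$.
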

    We show that $S$ is the generic set.
   Squaring $T_4^2$ we get $-E$. To express a transvection we consider the product
 $I_{11}T_4 =t_{12}$ and also $T_4 I_{11} = \left( \begin{matrix} 0 & -1  \\
   -1 &   1  \\
   \end{matrix} \right)$ which be denoted by $t$. Applying $-E$ we get $-E t^{-1}_{}= \left( \begin{matrix} 0 & 1  \\
   1 &   -1  \\
   \end{matrix} \right)$.
     Conjugation of $t_{11}$ lead as to $\rho  t_{12} \rho =t_{21}$.
   As well known the transvections $t_{11}$ and $t_{22}$ generate group $S{{L}_{2}}\left( \mathbb{Z} \right)$ and presents of $\rho$ with $\det(\rho)=-1$ extends this $S{{L}_{2}}\left( \mathbb{Z} \right)$ to $ES{{L}_{2}}\left( \mathbb{Z} \right)$.


Now we return to dimension 3 and consider involutive set of generators for $ES{{L}_{3}}\left( \mathbb{Z} \right)$.
Let ${{P}_{3}}= \begin{pmatrix}
   0 & 1 & 0  \\
   0 & 0 & 1  \\
   1 & 0 & 0  \\
\end{pmatrix} $.
\begin{proposition}
    In terms of generating set $\left<{P}_{3}, t_{12}, D_{123}\right>$ wherein
$t_{12}, t_{32}$ are trans-vections, its
relations are the following: ${P}_{3}t_{12} {P}^{-1}_{3}=t^{}_{31}, \, {P}_{3}t_{31} {P}^{-1}_{3}=t^{-1}_{23} $, ${P}_{3}t_{12} {P}^{-1}_{3}=t^{}_{31}, \, {P}_{3}t_{12} {P}^{-1}_{3}=t^{-1}_{23} $, ${P}^3_{3}=E, [t_{ij}, t_{jk}]=t_{ik}, \textbf{wherein} \, i\neq k,  \,\, [t_{ij}, t_{kl}]=e$ provided $i\neq l$ and $k \neq j$.
\end{proposition}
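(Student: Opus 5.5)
The plan has two parts: first verify the listed relations by computing how conjugation by ${P}_{3}$ acts on matrix units, and then show that $\left<{P}_{3}, t_{12}, D_{123}\right>$ is all of $ESL_3(\mathbb{Z})$ by producing every elementary transvection and invoking Lemma \ref{negativedet}.

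For the relations, write $t_{ij}=E+e_{ij}$ as in the proof of the Lemma for $n=5$. Since ${P}_{3}E{P}_{3}^{-1}=E$, it suffices to conjugate the unit $e_{ij}$. The columns of ${P}_{3}$ show that ${P}_{3}e_1=e_3$, ${P}_{3}e_2=e_1$ and ${P}_{3}e_3=e_2$. So ${P}_{3}$ realizes the cycle $\sigma=(1\,3\,2)$, and ${P}_{3}e_{ij}{P}_{3}^{-1}=e_{\sigma(i)\sigma(j)}$. This gives
\[
{P}_{3}t_{12}{P}_{3}^{-1}=t_{31},\qquad {P}_{3}t_{31}{P}_{3}^{-1}=t_{23},\qquad {P}_{3}t_{23}{P}_{3}^{-1}=t_{12}.
\]
Thus the three conjugation relations close up into a single orbit of length 3. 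Where the statement writes $t^{-1}_{23}$, I would record the sign that the computation actually produces; since $t_{23}$ and $t_{23}^{-1}$ generate the same cyclic subgroup, the sign has no effect on generation. The relation ${P}_{3}^3=E$ holds because $\sigma$ is a $3$-cycle. The Steinberg relations $[t_{ij},t_{jk}]=t_{ik}$ for $i\neq k$, and $[t_{ij},t_{kl}]=e$ for $i\neq l$, $k\neq j$, follow from $e_{ij}e_{kl}=\delta_{jk}e_{il}$ with the convention $[a,b]=aba^{-1}b^{-1}$. This is a one-line expansion of $(E+e_{ij})(E+e_{jk})(E-e_{ij})(E-e_{jk})$.

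For generation, the $\langle {P}_{3}\rangle$-orbit of $t_{12}$ already contains $t_{12},t_{31},t_{23}$. The commutators then give the remaining three transvections:
\[
[t_{12},t_{23}]=t_{13},\qquad [t_{23},t_{31}]=t_{21},\qquad [t_{31},t_{12}]=t_{32}.
\]
All six elementary transvections therefore lie in the group. Since they generate $SL_3(\mathbb{Z})$ (the classical result used via \cite{Humph}, \cite{VSEM} earlier in the paper), we get $SL_3(\mathbb{Z})<\left<{P}_{3}, t_{12}, D_{123}\right>$. Finally, $D_{123}=D_1D_2D_3=-E$ has determinant $(-1)^3=-1$, so Lemma \ref{negativedet} shows that the group is all of $ESL_3(\mathbb{Z})$. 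Moreover $D_{123}$ is central, which matches the direct decomposition $\langle D_{123}\rangle\times SL_3(\mathbb{Z})$ noted in Section 1.

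The main obstacle I expect is bookkeeping of conventions rather than any conceptual difficulty. One must fix the direction of $\sigma$ and the commutator convention consistently, because the statement's list mixes $t_{ij}$ and $t_{ij}^{-1}$ and repeats the first relation. I would first fix $[a,b]=aba^{-1}b^{-1}$ and compute the three conjugates explicitly, then adjust the stated relations to match the computation. I would also make clear that these relations are exhibited as valid identities in the group, not claimed to form a complete presentation.
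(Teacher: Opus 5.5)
Your proposal is correct and follows the paper's own route, which is simply ``an elementary computation and verification'' of the relations. Your extra generation argument (the $\langle P_3\rangle$-orbit of $t_{12}$, then three commutators giving the other transvections, then Lemma~\ref{negativedet} with $D_{123}=-E$) is also correct and goes beyond what the paper writes out. You should, however, present your correction as a genuine error in the statement rather than a sign convention: since $P_3$ is an unsigned permutation matrix, $P_3t_{ij}P_3^{-1}=t_{\sigma(i)\sigma(j)}$ exactly, so the listed relations $P_3t_{31}P_3^{-1}=t_{23}^{-1}$ and $P_3t_{12}P_3^{-1}=t_{23}^{-1}$ are false as written (the inverses belong to the signed $M_6$ of the earlier proposition), and what you actually prove is the corrected cycle $t_{12}\mapsto t_{31}\mapsto t_{23}\mapsto t_{12}$.
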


The proof is an elementary computation and verification.

Note that it is possible to express a transvection using only two non-commutative involutions for a matrices of arbitrary degree $n\in \mathbb{N}, n\geq 3$.

\begin{example}
\begin{center}
${{i}_{12}}=\left( \begin{matrix}
   1 & 1 &  0  \\
   0 & -1 & 0  \\
   0 & 0 & -1  \\
\end{matrix} \right)$,  ${{i}_{23}}=\left( \begin{matrix}
   1 & 0 & 0  \\
   0 & 1 & 1  \\
   0 & 0 & -1  \\
\end{matrix} \right)$,
$({{i}_{12}}{{i}_{23}})^2=\left( \begin{matrix}
   1 & 1 & 1  \\
   0 & -1 & -1  \\
   0 & 0 & 1  \\
\end{matrix} \right)^2= \left( \begin{matrix}
   1 & 0 & 1  \\
   0 & 1 & 0  \\
   0 & 0 & 1  \\
\end{matrix} \right) ={{t}_{31}}^{}$.
 \end{center}
Thus, we generate transvection $t_{31}$ by two involutions $i_{12}$ and $i_{23}$.
\end{example}

From this we obtain the set of generators $S=\left< D_1, {i}_{21}, {i}_{32}, M_6\right>$.
Similarly, for $SL_2[\mathbb{Z}]$ generating set is $S'=\left<  {i}_{12}, {i}_{23}, M_6\right>$.

\begin{proposition}\label{Growth}
There are no generic set of three involutions for $ESL(n,\mathbb{Z})$, one of which commutes with two others.
\end{proposition}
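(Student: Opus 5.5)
The plan is to show that such a triple would force $ESL(n,\mathbb{Z})$ to be virtually abelian, which is false for $n\geq 2$. (For $n=1$ the group $ESL(1,\mathbb{Z})=\{\pm 1\}$ is cyclic of order 2, so we assume $n\geq 2$ throughout.)

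Suppose $ESL(n,\mathbb{Z})=\langle a,b,c\rangle$ with $a^2=b^2=c^2=E$ and $[a,b]=[a,c]=e$.

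First, since $a$ commutes with every generator, $a$ lies in the center $Z(ESL(n,\mathbb{Z}))$. The center consists of scalar matrices, so $a\in\{E,-E\}$, and in particular $\langle a\rangle$ is normal.

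Second, consider $H=\langle b,c\rangle$. As a group generated by two involutions, $H$ is a quotient of the infinite dihedral group $\langle x,y \mid x^2=y^2=e\rangle$. Hence the cyclic subgroup $\langle bc\rangle$ has index at most $2$ in $H$; this is the same dihedral observation the paper uses in Theorem \ref{minimal3gen} and its variants.

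Third, since $a$ is central, every element of $ESL(n,\mathbb{Z})$ has the form $a^{\varepsilon}h$ with $\varepsilon\in\{0,1\}$ and $h\in H$. So $ESL(n,\mathbb{Z})=\langle a\rangle H$. The subgroup $A=\langle a, bc\rangle$ is abelian, because $a$ is central, and $A$ has index at most $2$ in $ESL(n,\mathbb{Z})$. Equivalently, the quotient $ESL(n,\mathbb{Z})/\langle a\rangle$ is generated by the two involutions $\bar b,\bar c$, so it is dihedral. In either form, $ESL(n,\mathbb{Z})$ would be virtually abelian, and in particular solvable.

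The contradiction comes from exhibiting a non-abelian free subgroup inside $SL(2,\mathbb{Z})\leq SL(n,\mathbb{Z})$, embedded in the upper-left corner. By Sanov's theorem (equivalently, the ping-pong lemma), the matrices $t_{12}^2$ and $t_{21}^2$ generate a free group of rank 2. A free group of rank 2 contains no abelian subgroup of finite index. Its intersection with $A$ would have index at most 2 in it, yet would have to be abelian, which is impossible. So no such triple exists. An alternative route, closer to the paper's style, is to reduce modulo a prime $p\ge 5$, obtaining the non-solvable group $ESL(n,\mathbb{F}_p)$ as a homomorphic image. One then contradicts the solvability of $\langle a\rangle H$ (cyclic-by-dihedral) exactly as in Theorem \ref{minimal3gen}. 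This needs $PSL(n,p)$ to be simple, which holds for $n\ge 2$ and $p\ge 5$.

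The main obstacle is only to make the step ``$a$ central $\Rightarrow$ whole group is (cyclic)$\cdot$(dihedral)'' rigorous and to choose the cleanest non-virtually-abelian witness. I would use the free subgroup $\langle t_{12}^2,t_{21}^2\rangle$, since it handles all $n\geq 2$ uniformly, including $n=2$ where $PSL(2,\mathbb{F}_p)$-type simplicity arguments need care for small $p$.
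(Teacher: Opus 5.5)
Your proof is correct, and it departs from the paper's only after a common first step. Both arguments use that the involution commuting with the other two is central. Up to this central factor, every element is then one of the dihedral words $(AB)^k$, $B(AB)^k$, $(AB)^kA$, $B(AB)^kA$.

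The paper then argues by growth. From the Jordan form of $AB$, the norms of these words grow linearly or exponentially in $k$, so only $O(R)$ of them have norm at most $R$. By contrast, the number of elements of $ESL_2(\mathbb{Z})$ of norm at most $R$ grows like a higher power of $R$.

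You replace this counting with a qualitative invariant. The group would contain the abelian subgroup $\langle a, bc\rangle$ of index at most $2$, a property inherited by every subgroup. Sanov's free subgroup $\langle t_{12}^2, t_{21}^2\rangle$ does not have this property.

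What your route buys is rigor and uniformity. You need no lattice-point estimates, whereas the paper's estimates are heuristic and it works in $ESL_2(\mathbb{Z})$ without spelling out the passage to general $n$. Your corner embedding handles every $n\ge 2$ at once. The paper's route, once made precise, needs only Jordan forms and counting rather than a free-subgroup theorem. It also gives quantitative information: the subgroup generated by such a triple has only linearly many elements in a ball of radius $R$.

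Your fallback via reduction modulo $p\ge 5$ is also sound, since $ESL(n,\mathbb{Z})\to ESL(n,\mathbb{F}_p)$ is onto and $SL(n,\mathbb{F}_p)$ is not solvable. One wording point: ``virtually abelian, in particular solvable'' holds here only because the abelian subgroup has index at most $2$ and is therefore normal. It is not a general implication.
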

\begin{proof}
A number of matrices with the norm less or equal than $R$ be denoted by $N(R)$.
All words over such generating set $\langle A, B, C \rangle$ that can be constructed over an alphabet from two involutions $A, B$ ($[A, B]$) are represented by one of the next sequences either ${{\text{(}AB\text{)}}^{n}}$ or $B{{(AB)}^{n}}$ or ${{(AB)}^{n}}A$ or $B{{(AB)}^{n}}A$.
It is sufficient to describe the form of the periodic part of these words, viz., ${{(AB)}^{n}}$ then it will be clear that not all the matrices from $ES{{L}_{2}}\left( \mathbb{Z} \right)$ can be represented in this way.

For the proof, a specific form ${{(AB)}^{n}}$ is needed.
An additional part of the proof is based on the fact that the group $ES{{L}_{2}}\left( \mathbb{Z} \right)$ is a two-parameter family. Therefore, it is unlikely to be covered by one-parameter families such as the sequence $AB$.

In view of $AB$ has a determinant equal to $±1$, then it is either similar to $\left( \begin{matrix}
   \pm 1 & 1  \\
   0 & \pm 1  \\
\end{matrix} \right)$  or diagonalizable, and in the last case at least one of the eigenvalues has modulus  $\ge 1$,
however over the ring $\mathbb{Z}$ in a diagonal matrix there only can be elements 1 or -1 in any arrangement.
(If both eigenvalues of $AB$  have modulus 1, and it is diagonalizable but then such a matrix $AB$ will simply have finite order. Such matrix generate cyclic group.)

For a matrix with a Jordan block size of 2 by 2 e.v. are $1$ or $-1$, thence $N(R)=cR$. Since the $n$-th power of such Jordan block is equal to ${AB}^n=\left( \begin{matrix}
1 & n \\
0 & 1 \\
\end{matrix} \right)$ or $\left( \begin{matrix}
-1 & n \\
0 & -1 \\
\end{matrix} \right)$.

Since the $n$-th power of the Jordan block is equal to $(AB)^n$, exactly the first $R$ terms (in a series of power of $(AB)$) are contained in a ball of radius $R$.
This cause that exactly the first $R$ terms are contained in a ball of radius $R$.
This determines the linear growth of the matrix norm $clog _{\alpha} R$.


At the same time, the number of elements $M$ where $M \in ESL_2(\mathbb{Z})$ grows as a function $R \sqrt{R}$, because if we fix first column of $M$, it remains to choose two elements of the second column by $\sqrt{R}$ ways in accordance with the theorem about the prime numbers distribution in order to  Diophantine equation $x \alpha - y \beta =1$ to be solvable over $\mathbb{Z}$.
Note that solvability of $x \alpha - y \beta =1$ is equivalent to $( \alpha, \beta)=1$ and therefore the theorem about prime numbers distribution is applicable to estimating the solutions number.


If the matrix $A \in ESL_2(\mathbb{Z})$ has the diagonal form $A=\left( \begin{matrix}
\alpha & 0 \\
0 & \alpha^{-1} \\
\end{matrix} \right)$, where $ \alpha \in \mathbb{C}$ and $| \alpha | >1$, $|\alpha^{-1} | <1$,
this yields exponential growth of a matrix $(AB)^n$ norm,
then in the series in its powers $A^n$ the following growth function of the number of matrices with norm no greater than $R$ holds: $N(R)=clog_{\alpha}R$, where constant $C$ appears due to equivalence transformation.

Therefore, ${{\left( AB \right)}^{n}}$ yields either linear or exponential growth.

The number of $ESL_2(\mathbb{Z})$ elements with a norm that is no grater than $R$ can be estimated from below as $R^2$ due to the relation on the determinant of these elements decrease $N(R)$ from $R^{k^2}$ to $R^{k^2-k}$.
Consequently, the number $N(R)$ of elements having norm no grater than $R$ generated by $\langle A, B, C \rangle$ which forms only four sequences (${{\text{(}AB\text{)}}^{n}}$, $B{{(AB)}^{n}}$, ${{(AB)}^{n}}A$, $B{{(AB)}^{n}}A$) is less than number elements of $ESL_2(\mathbb{Z})$ with a norm less than or equal to $R$, that's accomplish the proof.
\end{proof}
\begin{corollary}\label{minimal3}
    As a direct corollary we obtain that there is no two involution generating set of $ESL(n,\mathbb{Z})$, $n>2$.
\end{corollary}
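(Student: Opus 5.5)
Two involutions $A,B$ always generate a dihedral group $\langle AB\rangle\rtimes\langle A\rangle$. So the plan is to show that $ESL(n,\mathbb{Z})$, $n>2$, is not a quotient of a dihedral group. The argument uses only structural facts, so it does not depend on the counting estimate in Proposition~\ref{Growth}; that estimate can be viewed as the quantitative reason for the same phenomenon. The four families of words used in the proof of Proposition~\ref{Growth} are exactly the elements of such a dihedral group.

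First, suppose $ESL(n,\mathbb{Z})=\langle A,B\rangle$ with $A^2=B^2=E$. Every element of $\langle A,B\rangle$ has one of the forms $(AB)^m$, $B(AB)^m$, $(AB)^mA$, $B(AB)^mA$. Hence the cyclic subgroup $C=\langle AB\rangle$ has index at most $2$, and $C$ is normal because $A(AB)A^{-1}=BA=(AB)^{-1}$. So $ESL(n,\mathbb{Z})$ would be virtually cyclic, and in particular virtually abelian.

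Second, derive a contradiction in any of three ways. The subgroup $SL(n,\mathbb{Z})\cap C$ would have index at most $2$ in $SL(n,\mathbb{Z})$, so $SL(n,\mathbb{Z})$ would be virtually abelian. But it contains the non-abelian subgroup $\langle t_{12},t_{23}\rangle$ with $[t_{12},t_{23}]=t_{13}$, as recorded in the relations after Proposition~1. More decisively, for $n\ge 3$ the unipotent subgroup $\langle t_{12},t_{23}\rangle$ is a Heisenberg group, which is nilpotent of class $2$ and torsion-free. It has no abelian subgroup of finite index, since in any finite-index subgroup some powers $t_{12}^a$, $t_{23}^b$ survive, and $[t_{12}^a,t_{23}^b]=t_{13}^{ab}\neq E$. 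A third alternative is the reduction map $ESL(n,\mathbb{Z})\to ESL(n,\mathbb{F}_p)$: it would make the finite group $ESL(n,\mathbb{F}_p)$ dihedral, hence solvable, which contradicts the non-solvability of $SL(n,\mathbb{F}_p)$ for $n\ge3$. Surjectivity of reduction follows from the transvection generation used in Proposition~2 together with Lemma~\ref{negativedet}.

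The main obstacle is stating the dichotomy cleanly. Rather than rely on the growth estimates in Proposition~\ref{Growth}, whose constants are not carefully justified there, I will use the virtually-abelian argument via the Heisenberg subgroup $\langle t_{12},t_{23},t_{13}\rangle$. The only care needed there is to check that the commutator of powers of transvections is a nontrivial power of $t_{13}$, which follows directly from $t_{ij}^a=E+ae_{ij}$.
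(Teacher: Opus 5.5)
Your argument is correct, but it takes a different route from the paper. The paper derives this corollary from Proposition~\ref{Growth}, which is a counting argument. The elements of $\langle A,B\rangle$ lie in the four sequences $(AB)^m$, $B(AB)^m$, $(AB)^mA$, $B(AB)^mA$. The norm of $(AB)^m$ grows linearly in the parabolic case and exponentially in the hyperbolic case, so only $O(R)$ of these elements have norm at most $R$, while $ESL$ has superlinearly many matrices of norm at most $R$.

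You replace this quantitative comparison by a structural invariant. The subgroup $C=\langle AB\rangle$ is cyclic and normal of index at most $2$, and virtual cyclicity passes to any subgroup $H$ because $[H:H\cap C]\le[G:C]$. For $n\ge 3$, $ESL(n,\mathbb{Z})$ contains the Heisenberg group $\langle t_{12},t_{23}\rangle$, which is not virtually cyclic; already $\langle t_{12},t_{13}\rangle\cong\mathbb{Z}^2$ suffices.

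Your route is fully rigorous with elementary tools and works uniformly in $n$. The paper's eigenvalue analysis is carried out only for $2\times 2$ matrices, and its lattice-point counts are not justified. The growth route, if completed, would give more: every subgroup generated by two involutions is very sparse in $ESL(n,\mathbb{Z})$. The corollary does not need that.

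Your reduction-mod-$p$ alternative is the same solvability argument the paper uses for $ESL_n(\mathbb{F}_p)$, transported to $\mathbb{Z}$. For it you only need the image to contain $SL(n,\mathbb{F}_p)$, not surjectivity onto $ESL(n,\mathbb{F}_p)$.

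One correction: your first alternative is not a contradiction by itself. A virtually abelian group can contain infinite non-abelian subgroups; the infinite dihedral group is itself one. Drop it in favour of the Heisenberg argument, as you ultimately do.
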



\subsection{Commuting involutions in subgroup of $ES{{L}_{2}}(\mathbb{F}_p)$}
{\sl  Recall that a group is called a
 \textit{string group generated by involutions} $\{ \rho_0, \rho_1, \ldots, \rho_{n-1} \}$ (sggi) if there exists such ordering of the involutions wherein $\rho_i\rho_j = \rho_j\rho_i$ for every $i, j \in \{0, \ldots , n - 1 \}$ such that $| i - j| > 1$ \cite{Leem}.}

Assume there is indexed tuble of matrices $\{\rho_i\}$ such, that for all $i, j$ if $|i - j| > 1$ then $\rho_i \rho_j = \rho_j \rho_i$ and such a group calls $sggi$ group according to \cite{Leem}.

 {\bf Definition.}  {\sl By the \textit{extended group} of \textit{(upper) unitriangular matrices} $EUT_n({\mathbb{F}})$ (over a field $\mathbb{F}$) we mean the unitriangular group $UT_n(\mathbb{F})$ [SusUTn] that admits not only 1 but also -1 on the diagonal.}

{\bf Property.}
{\sl $EUT_3[\mathbb{Z}] \simeq \langle \rho_1, \rho_2, \rho_3, \rho_4 \rangle$ with the following generating set of involutions comply with commuting property [Leem].}
 \begin{center} $ {\scriptsize
\rho_0 = \begin{pmatrix} -1 & 0 & 0 \\ 0 & 1 & 0 \\ 0 & 0 & 1 \end{pmatrix}, \quad
\rho_1 = \begin{pmatrix} -1 & 1 & 0 \\ 0 & 1 & 0 \\ 0 & 0 & 1 \end{pmatrix}, \quad
\rho_2 = \begin{pmatrix} 1 & 0 & 0 \\ 0 & 1 & 1 \\ 0 & 0 & -1 \end{pmatrix},
\rho_3 = \begin{pmatrix} 1 & 0 & 0 \\ 0 & 1 & 0 \\ 0 & 0 & -1 \end{pmatrix},
} $
\end{center}
{\sl where  $[\rho_1, \rho_3] = e$, $[\rho_0, \rho_3] = e$, $[\rho_0, \rho_2] = e$, this defines the type of involution generating set  $(2 \times 2, 2 \times 2)$.}

\subsection{Minimal involutive generating set for $ESL_3(\mathbb F_2)$.}
 The minimal of transvections generating $SL(n, F)$ is $n$ that was found in \cite{Humph}.
 Involutive generating sets of linear groups over $F_2$ is subject of interest of many authors \cite{Nuz}.
 Here we find all minimal involutive generating sets, here are three of them:
\begin{center}
    $
        I_{11} = \begin{pmatrix}
            1 & 0 & 0 \\
            0 & 1 & 1 \\
            0 & 0 & 1
        \end{pmatrix}, \quad
        I_{12} = \begin{pmatrix}
            1 & 0 & 0 \\
            1 & 1 & 0 \\
            1 & 0 & 1
        \end{pmatrix}, \quad
        I_{13} = \begin{pmatrix}
            1 & 1 & 0 \\
            0 & 1 & 0 \\
            0 & 1 & 1
        \end{pmatrix}.
    $ \\[6pt]
    $
        I_{21} = \begin{pmatrix}
            1 & 0 & 0 \\
            1 & 1 & 0 \\
            0 & 0 & 1
        \end{pmatrix} \quad
        I_{22} = \begin{pmatrix}
            1 & 0 & 1 \\
            0 & 1 & 1 \\
            0 & 0 & 1
        \end{pmatrix} \quad
        I_{23} = \begin{pmatrix}
            1 & 1 & 1 \\
            0 & 0 & 1 \\
            0 & 1 & 0
        \end{pmatrix}
    $ \\[6pt]
    $
        I_{31} = \begin{pmatrix}
            1 & 0 & 0 \\
            1 & 1 & 1 \\
            0 & 0 & 1
        \end{pmatrix} \quad
        I_{32} = \begin{pmatrix}
            1 & 0 & 1 \\
            0 & 1 & 1 \\
            0 & 0 & 1
        \end{pmatrix} \quad
        I_{33} = \begin{pmatrix}
            1 & 1 & 1 \\
            0 & 0 & 1 \\
            0 & 1 & 0
        \end{pmatrix}
    $
\end{center}
But over $\mathbb{F}_2$ we have $-1\cong 1 mod 2$ then $ESL(3, \mathbb F_2) = SL(3, \mathbb F_2)$.
    Since $ESL(3,F_2)$ has not 2-generated involutive set, thence these sets are minimal as well.

There are 117 involutions in $SL(3, \mathbb F_2)$ and 48672 triples (combinations) of involutions.
Here we find minimal involutive generating set of $ESL(3, \mathbb F_3)$:
\begin{center}
 $        I_{31} = \begin{pmatrix}
            2 & 0 & 0 \\
            0 & 2 & 0 \\
            2 & 2 & 2
        \end{pmatrix}, \quad
        I_{32} = \begin{pmatrix}
            2 & 0 & 2 \\
            0 & 2 & 0 \\
            0 & 0 & 2
        \end{pmatrix}, \quad
        I_{33} = \begin{pmatrix}
            2 & 2 & 2 \\
            0 & 0 & 2 \\
            0 & 2 & 0
        \end{pmatrix}.     $
\end{center}

{\bf Conclusion.}
Size of minimal generated sets of $ESL(5, \mathbb{Z} )$ and $ESL(5, \mathbb{F}_p)$ as involutive as well as not involutive was found  by us in this research.

The Mazurov triples of involutions as the generator systems for $ESL(5, \mathbb{Z} )$ and $ESL(5, \mathbb{F}_p)$ are researched, the minimality of this triples is proved.

\textbf{Sources of Funding} for Research Presented in a Scientific Article or Scientific Article Itself
This work was partially supported by a grant from the \textbf{Simons Foundation} (\textbf{SFI-PD-Ukraine-00017674, Ruslan Skuratovskii}).


\small
\begin{thebibliography}{99}

\bibitem{SkuESL} \emph{Skuratovskii Ruslan, Lysenko S. O.} Extended Special Linear group $ESL_2(F)$ and matrix equations in $SL_2(F)$, $ESL_2(Z)$ and $GL_2(F_p)$. WSEAS TRANSACTIONS on MATHEMATICS DOI: 10.37394/23206.2024.23.68

\bibitem{LatCry}  Ajtai, Miklos  "Generating Hard Instances of Lattice Problems". Proceedings of the Twenty-Eighth Annual ACM Symposium on Theory of Computing. (1996). pp. 99–108. CiteSeerX 10.1.1.40.2489. doi:10.1145/237814.237838. ISBN 978-0-89791-785-8. S2CID 6864824.

 \bibitem{NTRU} FOUQUE, Pierre-Alain et al. Falcon: Fast-Fourier Lattice-based Compact Signatures over NTRU. 2020. Available from the Internet on <https://falcon-sign.info/>, accessed in November 8th, 2020.

\bibitem{BookLatCry} Güneysu, Tim; Lyubashevsky, Vadim; Poppelmann, Thomas (2012). Practical Lattice-Based Cryptography: A Signature Scheme for Embedded Systems (PDF). Cryptographic Hardware and Embedded Systems --- CHES 2012. Lecture Notes in Computer Science. Vol. 7428. IACR. pp. 530-547. doi:10.1007/978-3-642-33027-8 31.

\bibitem{Amit} \emph{Amit Kulshrestha and Anupam Singh.} Computing $n$-th roots in $SL_2(Z)$ and Fibonacci polynomials.
{\it Proc. Indian Acad. Sci.} (Math. Sci.) (2020) 130:31 https://doi.org/10.1007/s12044-020-0559-8.

\bibitem{Levch} \emph{Levchuk, D. V.} On generation of the group $PSL_n(Z+iZ)$ by three involutions,
two of which commute / D. V. Levchuk, Ya. N. Nuzhin // Journal SFU. Serie Math-Ph.
 2008. V. 1, Num. 2. pp. 133-139.

\bibitem{Coxet}  \emph{John Frank Adams.} Lectures on Lie Groups Second Edition by University of Chicago Press,  Revised ed. January 15, 1983. 192 pages.


\bibitem{TP} \emph{T. E. Panov and Ya. A. Veryovkin}. Polyhedral products and commutator subgroups of right-angled Artin and Coxeter groups. Sbornik: Mathematics 207:11 \, 1582-1600.

 \bibitem{Maz} \emph{Mazurov, V. D.} The Kourovka notebook: Unsolved Problems in Group Theory
/ Eds. V. D. Mazurov, E. I. Khukhro // Sobolev Institute of Mathematics,
Novosibirsk, 2022, Num. 20.


\bibitem{Klyach} \emph{Klyachko Anton A., Baranov D. V.} Economical adjunction of square roots to groups. Sib. math. journal, Volume 53 (2012), Number 2, pp. 250-257.

 \bibitem{Suds} \emph{H. A. Janabi, L. Hethelyi and E. Horvoth} (2020)
     {\em Journal of Group Theory.}
     TI subgroups and depth 3-subgroups in simple Suzuki groups.
     https://doi.org/10.1515/jgth-2020-0044


\bibitem{Zu} \emph{ N.~D.~Zyulyarkina}, On the commutation graph of cyclic TI-subgroups in linear groups, // Proc. Steklov Inst. Math. (Suppl.), 279, suppl. 1 (2012), 175-181.

\bibitem{Nuz}
\emph{Ya. N. Nuzhin}, Generating triples of involutions of groups of Lie type over a finite field of odd characteristic. II, Algebra and Logic, 36:4(1997), 422-440.

\bibitem{LinearShpringer}
\emph{Jurg Liesen, Volker Mehrmann}. {\em Linear Algebra. Springer Undergraduate Mathematics Series. Springer International Publishing Switzerland 2015 (2015).} DOI https://doi.org/10.1007/978-3-319

\bibitem{Mersl}
\emph{ Yu.~I.~Merzlyakov}, Automorphisms of two-dimensional congruence groups, Algebra and Logic, 10.1007/BF02218574, 12, 4, (262-267), (1973).

\bibitem{Humph}
\emph{Stephen~P.~Humphries}. Generation of Special Linear Groups by Transvections. Journal
of Algebra 99 (1986), p. 480 - 495.



\bibitem{VSEM} \emph{M.~A.~Vsemirnov}.
ON (2,3)-GENERATION OF MATRIX GROUPS
OVER THE RING OF INTEGERS. {\it St. Petersburg Math. J.}
 (2008),  Vol. 19 No. 6, 883--910.

\bibitem{Leem} \emph{ Dimitri~Leemans}. String C-group representations of almost simple groups: A
survey. {\it Contemporary Mathematics }
Volume 764, 2021 https://doi.org/10.1090/conm/764/15335.

\bibitem{SusUTn}
\emph{A.~S.~Oliinyk, V.~I.~Sushchanskii}, Free group of infinite unitriangular matrices,
Mat. Zametki, 2000, Volume 67, Issue 3, 382 - 386.

\bibitem{Mark}
Irina A. Markovskaya, Yakov N. Nuzhin.
 On Generation of the Groups $GL_n(Z)$ and $PGL_n (Z)$ by Three
Involutions, Two of which Commute. / Journal of Siberian Federal University. Mathematics \& Physics 2023, Num. 16(4), 413 - 419.


\bibitem{Aschbacher} \emph{ M. Aschbacher}.
Finite Group Theory. August 2, 2010 Cambridge Studies in Advanced Mathematics 10)   2nd Edition, 318 pages.
\end{thebibliography}
\end{document}